\title{Topological semantics of conservativity and interpretability logics}
\author{Sohei Iwata\footnote{E-mail: soh.iwata@people.kobe-u.ac.jp}
\footnote{Graduate School of System Informatics, Kobe University, 1-1 Rokkodai, Nada, Kobe 657-8501, Japan.} and Taishi Kurahashi\footnote{Email: kurahashi@people.kobe-u.ac.jp}
\footnote{Graduate School of System Informatics, Kobe University, 1-1 Rokkodai, Nada, Kobe 657-8501, Japan.}}
\date{}
\theoremstyle{plain}
\newtheorem{thm}{Theorem}[section]
\newtheorem{lem}[thm]{Lemma}
\newtheorem{prop}[thm]{Proposition}
\newtheorem{cor}[thm]{Corollary}
\newtheorem{fact}[thm]{Fact}
\newtheorem{prob}[thm]{Problem}
\newtheorem{cl}{Claim}
\theoremstyle{definition}
\newtheorem{defn}[thm]{Definition}
\newcommand{\PA}{\mathbf{PA}}
\newcommand{\GL}{\mathbf{GL}}
\newcommand{\CL}{\mathbf{CL}}
\newcommand{\CLM}{\mathbf{CLM}}
\newcommand{\IL}{\mathbf{IL}}
\newcommand{\ILP}{\mathbf{ILP}}
\newcommand{\ILM}{\mathbf{ILM}}
\newcommand{\ILW}{\mathbf{ILW}}
\newcommand{\Log}{\mathsf{Log}}
\newcommand{\J}[1]{\mathbf{J#1}}
\newcommand{\M}{\mathbf{M}}
\newcommand{\axP}{\mathbf{P}}
\newcommand{\axW}{\mathbf{W}}
\def\qed{\hfill {\small \ding{113}}}
\begin{document}

\maketitle

\begin{abstract}
We introduce and develop a topological semantics of conservativity logics and interpretability logics. 
We prove the topological compactness theorem of consistent normal extensions of the conservativity logic $\CL$ by extending Shehtman's ultrabouquet construction method to our framework. 
As a consequence, we prove that several extensions of $\CL$ such as $\IL$, $\ILM$, $\ILP$ and $\ILW$ are strongly complete with respect to our topological semantics. 
\end{abstract}

%\noindent \textit{Keywords}: Provability logic, conservativity logic, interpretavility logic, strong completeness, topological semantics

\section{Introduction}

The present paper is devoted to solving a natural problem of whether the topological semantics of the propositional modal logic $\GL$ can be extended to that of conservativity logics and interpretability logics, which are extensions of $\GL$. 
We newly introduce a topological semantics of these logics, and investigate several basic properties of our semantics such as the topological strong completeness of them.

The logic $\GL$ is known as the logic of provability (cf.~Boolos \cite{Boo93}). 
Let $\mathrm{Pr}_{\PA}(x)$ be a natural provability predicate of Peano Arithmetic $\PA$. Then, the logic $\GL$ is precisely the set of all $\PA$-verifiable modal formulas under all arithmetical interpretations where the modal operator $\Box$ is interpreted by $\mathrm{Pr}_{\PA}(x)$. 
This is called Solovay's arithmetical completeness theorem \cite{Sol76}. 
In his proof, the completeness theorem of $\GL$ with respect to Kripke semantics plays an essential role. 
Actually, it is well-known that $\GL$ is complete with respect to the class of all transitive and conversely well-founded finite Kripke frames. 
On the other hand, it is also known that $\GL$ is not strongly complete with respect to Kripke semantics, that is, there exists a set $\Gamma$ of modal formulas such that $\Gamma$ is finitely satisfiable in a transitive and conversely well-founded Kripke model, but $\Gamma$ itself is not satisfiable (See also Boolos \cite{Boo93}). 

This obstacle can be avoided by dealing with topological semantics of modal logics. 
Topological semantics of modal logic based on derived sets were initiated by McKinsey and Tarski \cite{MT44}. 
Also topological semantics of $\GL$ was founded by Simmons \cite{Sim75} and Esakia \cite{Esa81}, and has been developed by many authors (See Beklemishev and Gabelaia \cite{BG14}). 
One of important results in this research is the fact that $\GL$ is determined by the class of all scattered topological spaces. 
Moreover, as opposed to Kripke semantics, Shehtman \cite{She98} proved that $\GL$ is strongly complete with respect to scattered spaces by using so-called the method of ultrabouquet construction. 

The language of interpretability logics has the additional binary modal operator $\rhd$. 
The modal formula $\varphi \rhd \psi$ is intended to be read as ``$T + \psi$ is interpretable in $T + \varphi$", where $T$ is a suitable theory of arithmetic, such as $\PA$. 
The logic $\IL$ is a basis for the modal logical investigations of the notion of interpretability between theories, and it has been proved that the extensions $\ILM$ and $\ILP$ of $\IL$ are arithmetically complete. 
Also it is known that the notion of interpretability is closely related to that of partial conservativity. 
Actually, the logic $\ILM$ is exactly the logic of $\Pi_1$-conservativity of theories of arithmetic (See Japaridze and de Jongh \cite{JapDeJ98} for a detailed extensive survey of these results). 
From this point of view, Ignatiev \cite{Ign91} introduced the sublogic $\CL$ of $\IL$ as a basis for modal logical study of capturing properties of the notion of partial conservativity. 

A relational semantics of interpretability logics was introduced by de Jongh and Veltman \cite{deJVel90} that is called \textit{Veltman semantics}. 
A Veltman frame is a Kripke frame equipped with a family of binary relations. 
Then, de Jongh and Veltman \cite{deJVel90} proved that the logics $\IL$, $\ILM$ and $\ILP$ are complete with respect to Veltman semantics. 
Several alternative relational semantics of interpretability logics are also known, and one of important semantics was introduced by Visser \cite{Vis88} that is called \textit{simplified Veltman semantics} or \textit{Visser semantics}. 
By constructing bisimulations between corresponding Visser and Veltman frames, Visser proved that $\IL$, $\ILM$ and $\ILP$ are also complete with respect to Visser semantics. 
Moreover, Ignatiev \cite{Ign91} proved that the logic $\CL$ is complete with respect to both Veltman and Visser semantics. 
However, it can be shown that $\CL$ and $\IL$ lack strong completeness in both Veltman and Visser semantics, as in $\GL$.

On the other hand, there is a possibility of finding out the strong completeness of these logics with respect to another semantics. 
Particularly, one with respect to topological semantics is strongly suggested by Shehtman's strong completeness theorem of $\GL$. 
From this perspective, in the present paper, we propose a topological semantics of $\CL$ and its extensions, and prove the strong completeness theorem of some of these logics by extending Shehtman's method of ultrabouquet construction.

This paper is organized as follows. 
We briefly summarize Kripke and topological semantics of $\GL$ and Visser semantics of $\CL$ and its extensions in the next section. 
In Section \ref{TopSem}, we introduce a new topological semantics of normal extensions of $\CL$, and investigate some basic properties of our semantics. 
Our topological semantics is based on bitopological spaces with Visser semantics in mind. 
In Section \ref{MT}, we extend Shehtman's ultrabouquet construction to our framework, and then we prove the topological compactness theorem of consistent normal extensions of $\CL$. 
As a consequence, the topological strong completeness theorem of the logics $\CL$, $\CLM$, $\IL$, $\ILM$, $\ILP$ and $\ILW$ are obtained. 
Finally, in Section \ref{IL}, we discuss topological aspects of the logic $\IL$.

\section{Preliminaries}

The language $\mathcal{L}(\Box)$ of propositional modal logic consists of countably many propositional variables $p_0, p_1, p_2, \ldots$, logical constants $\top$, $\bot$, logical connectives $\neg, \land, \lor, \to$ and unary modal operators $\Box, \Diamond$. 
%The modal operator $\Diamond$ is an abbreviation for $\neg \Box \neg$. 
A set $L$ of $\mathcal{L}(\Box)$-formulas is said to be a \textit{normal modal logic} if $L$ contains all tautologies in the language $\mathcal{L}(\Box)$ and the formula $\Box(p \to q) \to (\Box p \to \Box q)$, and is closed under Modus Ponens $\dfrac{\varphi \to \psi\ \ \ \varphi}{\psi}$, Necessitation $\dfrac{\varphi}{\Box \varphi}$ and Substitution $\dfrac{\varphi(p_0, \ldots, p_n)}{\varphi(\psi_0, \ldots, \psi_n)}$. 
For any normal modal logic $L$, any set $\Gamma$ of $\mathcal{L}(\Box)$-formulas and any $\mathcal{L}(\Box)$-formula $\varphi$, we write $\Gamma \vdash_L \varphi$ to indicate that there exists a finite subset $\Gamma_0$ of $\Gamma$ such that $\bigwedge \Gamma_0 \to \varphi \in L$. 

The logic $\GL$ is defined as the smallest normal modal logic containing the additional axiom $\Box(\Box p \to p) \to \Box p$.

This section consists of three subsections. 
In the first subsection, we introduce Kripke semantics of $\GL$. 
The second subsection is devoted to introducing topological semantics of $\GL$, and reviewing some basic results relating to our study. 
In the last subsection, we introduce the conservativity logic $\CL$ and its extensions,  and also introduce their relational semantics, namely, Visser semantics.

\subsection{Kripke semantics of $\GL$}

\begin{defn}[Kripke frames and models]\label{KripkeFrames}\leavevmode
\begin{itemize}
	\item A pair $\langle W, R \rangle$ is said to be a \textit{Kripke frame} if $W$ is a non-empty set and $R$ is a binary relation on $W$.
	\item A triple $\langle W, R, \Vdash \rangle$ is said to be a \textit{Kripke model} if $\langle W, R \rangle$ is a Kripke frame and $\Vdash$ is a binary relation between $W$ and the set of all $\mathcal{L}(\Box)$-formulas satisfying the following conditions:
	\begin{enumerate}
		\item $x \nVdash \bot$ and $x \Vdash \top$;
		\item $x \Vdash \neg \varphi \iff x \nVdash \varphi$; 
		\item $x \Vdash \varphi \land \psi \iff x \Vdash \varphi$ and $x \Vdash \psi$; 
		\item $x \Vdash \varphi \lor \psi \iff x \Vdash \varphi$ or $x \Vdash \psi$; 
		\item $x \Vdash \varphi \to \psi \iff x \nVdash \varphi$ or $x \Vdash \psi$; 
		\item $x \Vdash \Box \varphi \iff \forall y \in W [x R y \Rightarrow y \Vdash \varphi]$; 
		\item $x \Vdash \Diamond \varphi \iff \exists y \in W [x R y\ \&\ y \Vdash \varphi]$.  
	\end{enumerate}
	\item An $\mathcal{L}(\Box)$-formula $\varphi$ is said to be \textit{valid in $\langle W, R \rangle$} if for any Kripke model $\langle W, R, \Vdash \rangle$ and any $x \in W$, $x \Vdash \varphi$. 
	\item Let $\Log(W, R)$ denote the set of all $\mathcal{L}(\Box)$-formulas valid in $\langle W, R \rangle$, and this set is called the \textit{logic of $\langle W, R \rangle$}.
\end{itemize}
\end{defn}

Notice that every $\Log(W, R)$ is a normal modal logic. 
We say that a binary relation $R$ on a set $W$ is \textit{conversely well-founded} if there is no infinite $R$-increasing sequence of elements of $W$. 
Then, the validity of the logic $\GL$ in a Kripke frame is characterized by a property of the relation $R$. 

\begin{fact}[See Boolos {\cite[Theorem 10 in Chapter 4]{Boo93}}]\label{CWF}
For any Kripke frame $\langle W, R \rangle$, $\GL \subseteq \Log(W, R)$ if and only if $R$ is transitive and conversely well-founded.
\qed
\end{fact}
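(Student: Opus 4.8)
The plan is to reduce everything to the validity of the single Löb axiom $\lambda := \Box(\Box p \to p) \to \Box p$. Since every $\Log(W,R)$ is a normal modal logic and $\GL$ is by definition the \emph{smallest} normal modal logic containing $\lambda$, the inclusion $\GL \subseteq \Log(W,R)$ holds if and only if $\lambda \in \Log(W,R)$, i.e. if and only if $\lambda$ is valid in $\langle W, R\rangle$. Thus it suffices to show that $\lambda$ is valid in $\langle W,R\rangle$ precisely when $R$ is transitive and conversely well-founded.

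For the direction from right to left, assume $R$ is transitive and conversely well-founded, fix a model $\langle W, R, \Vdash\rangle$ and a point $x$ with $x \Vdash \Box(\Box p \to p)$, and argue that $x \Vdash \Box p$. First I would record the auxiliary fact that in a conversely well-founded relation every nonempty subset has an $R$-maximal element; otherwise one could construct, by dependent choice, an infinite $R$-increasing sequence. Suppose for contradiction that the set $S = \{y : x R y \text{ and } y \nVdash p\}$ is nonempty, and pick an $R$-maximal $y \in S$. From $x R y$ and the hypothesis we get $y \Vdash \Box p \to p$, and since $y \nVdash p$ this forces $y \nVdash \Box p$; hence there is $z$ with $y R z$ and $z \nVdash p$. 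Transitivity gives $x R z$, so $z \in S$, contradicting the maximality of $y$. Therefore $S = \emptyset$ and $x \Vdash \Box p$, so $\lambda$ is valid.

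For the converse direction I would establish the contrapositive of each property separately, and this is where the only real work lies. Transitivity is obtained by noting that $\Box p \to \Box\Box p$ is derivable in $\GL$ from $\lambda$, hence valid in $\langle W,R\rangle$, and then refuting it on any non-transitive configuration $x R y$, $y R z$, $\neg(x R z)$ via the valuation making $p$ true everywhere except at $z$: this yields $x \Vdash \Box p$ but $y \nVdash \Box p$, so $x \nVdash \Box\Box p$, a contradiction. For converse well-foundedness I would directly refute $\lambda$: given an infinite $R$-increasing sequence $x_0 R x_1 R x_2 R \cdots$, take the valuation with $w \Vdash p$ if and only if $w \notin \{x_n : n \geq 0\}$. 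Then $x_0 \nVdash \Box p$ because $x_0 R x_1$ and $x_1 \nVdash p$, while at every successor $y$ of $x_0$ the formula $\Box p \to p$ holds: vacuously when $y$ lies on the sequence, since then $y = x_n$ has the successor $x_{n+1}$ falsifying $p$ and so $y \nVdash \Box p$, and by a true consequent when $y$ is off the sequence. Hence $x_0 \Vdash \Box(\Box p \to p)$ while $x_0 \nVdash \Box p$, contradicting the validity of $\lambda$. The main obstacle is assembling these two refuting valuations correctly; the remainder is the bookkeeping of the reduction to $\lambda$ together with the maximal-element lemma.
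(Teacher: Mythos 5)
Your proof is correct and follows essentially the same route as the source the paper cites for this fact (the paper itself gives no proof, deferring to Boolos): reduce the inclusion $\GL \subseteq \Log(W,R)$ to validity of the L\"ob axiom using that $\Log(W,R)$ is normal, prove the right-to-left direction via an $R$-maximal element of $\{y \in W \mid x R y \text{ and } y \nVdash p\}$, and prove the converse by the two standard refuting valuations. The only imported ingredient is the derivability of $\Box p \to \Box\Box p$ in $\GL$, which is itself a standard fact (also in Boolos, and avoidable anyway by refuting the L\"ob axiom directly on the non-transitive configuration), so nothing essential is missing.
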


%For a modal logic $L$ and an $\mathcal{L}(\Box)$-formula $\varphi$, we write $\vdash_L \varphi$ if $\varphi$ is provable in $L$.

We introduce the consequence relation $\models_L^K$ with respect to Kripke semantics where $K$ stands for ``Kripke''. 

\begin{defn}
Let $L$ be a normal modal logic, $\Gamma$ be a set of $\mathcal{L}(\Box)$-formulas and $\varphi$ be an $\mathcal{L}(\Box)$-formula.
\begin{itemize}
	\item $\Gamma \models_L^K \varphi$ $: \iff$ for any Kripke model $\langle W, R, \Vdash \rangle$ satisfying $L \subseteq \Log(W, R)$ and any $x \in W$, if $x \Vdash \psi$ for all $\psi \in \Gamma$, then $x \Vdash \varphi$. 
\end{itemize}
\end{defn}

Clearly, $\Gamma \vdash_L \varphi$ implies $\Gamma \models_L^K \varphi$. 
For $\GL$, the converse implication also holds in the case of $\Gamma = \varnothing$. 
This is the Kripke completeness theorem of $\GL$. 

\begin{fact}[Kripke completeness of $\GL$ (Segerberg \cite{Seg71})]\label{KCGL}
For any $\mathcal{L}(\Box)$-formula $\varphi$, $\varnothing \vdash_\GL \varphi$ if and only if $\varnothing \models_\GL^K \varphi$. 
\qed
\end{fact}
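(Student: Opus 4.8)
The forward direction is an instance of the general soundness observation $\Gamma \vdash_L \varphi \Rightarrow \Gamma \models_L^K \varphi$ already noted above, so all the content lies in the converse. I would prove it in contrapositive form: assuming $\varnothing \nvdash_\GL \varphi$, equivalently that $\neg\varphi$ is $\GL$-consistent, I would construct a finite transitive conversely well-founded Kripke model refuting $\varphi$ at some world. By Fact \ref{CWF} any such model satisfies $\GL \subseteq \Log(W,R)$, and this immediately yields $\varnothing \not\models_\GL^K \varphi$. The overall plan is thus a finite-model (filtration) construction rather than an appeal to the full canonical model, since $\GL$ is not canonical.

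Concretely, I would first fix a finite set $\Sigma$ of $\mathcal{L}(\Box)$-formulas containing $\neg\varphi$ and closed under subformulas and single negations. Take $W$ to be the collection of all $\GL$-consistent subsets of $\Sigma$ that decide every member of $\Sigma$ (the $\Sigma$-maximal consistent sets); since $\Sigma$ is finite, $W$ is finite. Define $xRy$ to hold exactly when (i) for every $\Box\psi \in \Sigma$, $\Box\psi \in x$ implies $\psi \in y$ and $\Box\psi \in y$, and (ii) there is some $\Box\psi \in \Sigma$ with $\Box\psi \in y$ and $\Box\psi \notin x$. Clause (i) forces transitivity, while clause (ii) forbids $xRx$; on a finite set an irreflexive transitive relation admits no infinite ascending chain, hence is conversely well-founded, so $\langle W,R\rangle$ has exactly the shape required by Fact \ref{CWF}. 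One then reads the valuation off membership, declaring $x \Vdash p \iff p \in x$ for propositional variables.

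The heart of the argument is the Truth Lemma: for every $\psi \in \Sigma$ and every $x \in W$, $x \Vdash \psi$ iff $\psi \in x$, proved by induction on $\psi$. The Boolean cases are routine from $\Sigma$-maximality. The modal case splits in two. If $\Box\psi \in x$, then by clause (i) every $R$-successor $y$ contains $\psi$, so $y \Vdash \psi$ by the induction hypothesis and hence $x \Vdash \Box\psi$; this direction is straightforward. The hard direction, and the main obstacle, is to show that $\Box\psi \notin x$ produces an $R$-successor $y$ with $\psi \notin y$, so that $x \nVdash \Box\psi$. This is precisely where the Löb axiom is indispensable: one argues that the set $\{\chi, \Box\chi : \Box\chi \in x\} \cup \{\neg\psi, \Box\psi\}$ is $\GL$-consistent. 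Indeed, were it inconsistent, a short derivation using the $\GL$-theorem $\Box\chi \to \Box\Box\chi$ together with the Löb schema $\Box(\Box\psi \to \psi) \to \Box\psi$ would force $\Box\psi \in x$, contradicting the assumption. Extending this consistent set to a $\Sigma$-maximal consistent $y$ gives the desired successor: clause (i) holds by construction, clause (ii) is witnessed by $\Box\psi$ itself (which lies in $y$ by construction but not in $x$ by assumption), and $\neg\psi \in y$ yields $\psi \notin y$.

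Finally, since $\neg\varphi$ is $\GL$-consistent it extends to some $x_0 \in W$ with $\neg\varphi \in x_0$, whence $\varphi \notin x_0$; the Truth Lemma then gives $x_0 \nVdash \varphi$. As $R$ is transitive and conversely well-founded, Fact \ref{CWF} delivers $\GL \subseteq \Log(W,R)$, so the model $\langle W,R,\Vdash\rangle$ witnesses $\varnothing \not\models_\GL^K \varphi$, completing the contrapositive and hence the proof. I expect the Löb-driven consistency of the successor set to be the only genuinely delicate point; everything else is bookkeeping on a finite set of maximal consistent sets.
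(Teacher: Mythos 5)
Your proof is correct: the paper itself gives no proof of this fact, citing Segerberg, and your argument is precisely the standard construction from the cited literature (the finite model built from maximal $\GL$-consistent subsets of a subformula-closed set, with $xRy$ defined by the two clauses and the L\"ob axiom used to show consistency of the successor set $\{\chi, \Box\chi : \Box\chi \in x\} \cup \{\neg\psi, \Box\psi\}$, as in Boolos's book). All steps check out, including the verification via Fact \ref{CWF} that the resulting finite transitive irreflexive frame validates $\GL$, so nothing further is needed.
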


On the other hand, $\GL$ is not strongly complete with respect to Kripke semantics, that is, the equivalence of $\Gamma \vdash_{\GL} \varphi$ and $\Gamma \models_{\GL}^K \varphi$ does not hold in general. 

\begin{fact}[Fine and Rautenberg (see Boolos {\cite[pp.~102--103]{Boo93}})]\label{SKInCompl}
Let 
\[
	\Delta := \{\Diamond p_0\} \cup \{\Box (p_n \to \Diamond p_{n+1}) \mid n \in \mathbb{N}\}, 
\]
then $\Delta \models_\GL^K \bot$ but $\Delta \nvdash_\GL \bot$. 
\qed
\end{fact}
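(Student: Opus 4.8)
The plan is to prove the two claims separately, using Fact~\ref{CWF} throughout to translate the hypothesis $\GL \subseteq \Log(W,R)$ into the frame conditions that $R$ is transitive and conversely well-founded.

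For the entailment $\Delta \models_\GL^K \bot$, I would argue that $\Delta$ is in fact unsatisfiable at every point of every Kripke model $\langle W, R, \Vdash\rangle$ with $\GL \subseteq \Log(W,R)$, so that the entailment of $\bot$ holds vacuously. Suppose for contradiction that some $x_0 \in W$ forces every formula of $\Delta$. From $x_0 \Vdash \Diamond p_0$ I pick $x_1$ with $x_0 R x_1$ and $x_1 \Vdash p_0$, and then construct inductively an infinite sequence $x_0 R x_1 R x_2 R \cdots$ with $x_{n+1} \Vdash p_n$ for every $n$. Indeed, given $x_{n+1} \Vdash p_n$, transitivity yields $x_0 R x_{n+1}$, so $x_{n+1} \Vdash p_n \to \Diamond p_{n+1}$ since $x_0 \Vdash \Box(p_n \to \Diamond p_{n+1})$; hence $x_{n+1} \Vdash \Diamond p_{n+1}$ and one may choose $x_{n+2}$ with $x_{n+1} R x_{n+2}$ and $x_{n+2} \Vdash p_{n+1}$. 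The resulting infinite $R$-increasing sequence contradicts the converse well-foundedness of $R$ guaranteed by Fact~\ref{CWF}.

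For $\Delta \nvdash_\GL \bot$, it suffices to show that no finite subset $\Delta_0 \subseteq \Delta$ satisfies $\bigwedge \Delta_0 \to \bot \in \GL$, equivalently that $\neg \bigwedge \Delta_0 \notin \GL$ for each finite $\Delta_0$. Fixing $\Delta_0$, choose $N$ with $\Delta_0 \subseteq \{\Diamond p_0\} \cup \{\Box(p_n \to \Diamond p_{n+1}) \mid n < N\}$, and consider the finite model on $W = \{0,1,\dots,N+1\}$ with $i R j \iff i < j$; this $R$ is transitive and, being a strict order on a finite set, conversely well-founded, so $\GL \subseteq \Log(W,R)$ by Fact~\ref{CWF}. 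Define the valuation by letting $i \Vdash p_{i-1}$ for $1 \le i \le N+1$ and all other relevant variables false. A direct check shows that $0$ forces $\Diamond p_0$ (witnessed by $1$) and each $\Box(p_n \to \Diamond p_{n+1})$ for $n < N$ (the only successor of $0$ forcing $p_n$ is $n+1$, whose successor $n+2$ forces $p_{n+1}$), so $0 \Vdash \bigwedge \Delta_0$. Since every theorem of $\GL$ is valid in $\langle W,R\rangle$ while $\neg\bigwedge\Delta_0$ fails at $0$, we get $\neg\bigwedge\Delta_0 \notin \GL$; as $\Delta_0$ was arbitrary, $\Delta \nvdash_\GL \bot$.

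The only genuinely delicate point is the propagation step in the first part: each box formula is assumed only at the root $x_0$, and it is precisely transitivity that makes $\Box(p_n \to \Diamond p_{n+1})$ available at the deep world $x_{n+1}$ where it is needed to extend the chain. Everything else is routine, and the finite linear models of the second part are standard witnesses of $\GL$-consistency.
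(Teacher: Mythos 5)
Your proof is correct, and it is essentially the standard Fine--Rautenberg argument that the paper cites from Boolos without reproducing: an infinite $R$-chain forced by $\Delta$ contradicts converse well-foundedness (via Fact~\ref{CWF}), while each finite subset is satisfied at the root of a finite strict linear order. Both halves check out, including the delicate transitivity step that transports $\Box(p_n \to \Diamond p_{n+1})$ from $x_0$ to $x_{n+1}$, so there is nothing to correct.
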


%However, $\GL$ is strongly completeness with respect to topological semantics. We will describe this fact in the next subsection.

\subsection{Topological semantics of $\GL$}

For a non-empty set $X$ and a family $\tau$ of its subsets, we say that \textit{$\tau$ is a topology on $X$} if they enjoy the following conditions:
\begin{enumerate}
	\item $X, \varnothing \in \tau$; 
	\item If $U_0, U_1 \in \tau$, then $U_0 \cap U_1 \in \tau$; 
	\item For any family $\{U_i\}_{i \in I}$ of sets of $\tau$, $\bigcup_{i \in I} U_i \in \tau$. 
\end{enumerate}
Then, the pair $\langle X, \tau \rangle$ is called a \textit{topological space}. 
Every $U \in \tau$ containing $x \in X$ is called a \textit{$\tau$-neighborhood of $x$}.

\begin{defn}[Derived sets and co-derived sets]
Let $\langle X, \tau \rangle$ be a topological space and $Y \subseteq X$. 
\begin{itemize}
	\item The \textit{derived set $d_\tau(Y)$ of $Y$ (with respect to $\tau$)} is the subset of $X$ defined as follows:
\begin{align*}
 d_\tau(Y) : = \{x \in X \mid \forall U \in \tau [ x \in U \Rightarrow \exists y \neq x (y \in U \cap Y) ]\}; 
\end{align*} 
	\item The \textit{co-derived set $cd_\tau(Y)$ of $Y$ (with respect to $\tau$)} is the set $\overline{d_\tau(\overline{Y})}$, where $\overline{Y}$ is the complement of $Y$. 
\end{itemize}
\end{defn}

In topological semantics of modal logic, every topological space plays a role of a frame, and $\mathcal{L}(\Box)$-formulas are interpreted as subsets of the topological space by valuations.

\begin{defn}[Valuations on topological spaces]\label{ValTop}
Let $\langle X, \tau \rangle$ be a topological space. 
\begin{itemize}
	\item A \textit{valuation on $\langle X, \tau \rangle$} is a mapping $v: \mathcal{L}(\Box) \rightarrow \mathcal{P}(X)$ satisfying the following conditions:
\begin{enumerate}
	\item $v(\bot) = \varnothing$ and $v(\top) = X$; 
	\item $v(\neg \varphi) = \overline{v(\varphi)}$; 
	\item $v(\varphi \land \psi) = v(\varphi) \cap v(\psi)$; 
	\item $v(\varphi \lor \psi) = v(\varphi) \cup v(\psi)$; 
	\item $v(\varphi \to \psi) = \overline{v(\varphi)} \cup v(\psi)$; 
	\item $v(\Box \varphi) = cd_{\tau}(v(\varphi))$; 
	\item $v(\Diamond \varphi) = d_{\tau}(v(\varphi))$.
\end{enumerate}
	\item We say that an $\mathcal{L}(\Box)$-formula $\varphi$ is \textit{valid in $\langle X, \tau \rangle$} if $v(\varphi) = X$ for all valuations $v$ on $\langle X, \tau \rangle$.
	\item Let $\Log(X, \tau)$ be the set of all $\mathcal{L}(\Box)$-formulas valid in $\langle X, \tau \rangle$, and we call this set the \textit{logic of $\langle X, \tau \rangle$}.
\end{itemize}
\end{defn}

%Clearly $v(\Box \varphi) = cd_\tau(\varphi)$. 
%Note that $\mathbf{K} \subseteq \Log(X, \tau)$ for any topological space $(X,\tau)$.

It is known that every $\Log(X, \tau)$ is a normal modal logic validating $p \land \Box p \to \Box \Box p$ (See Esakia \cite{Esa01} and van Benthem and Bezhanishvili \cite{vBB07}). 
As well as Fact \ref{CWF}, the validity of the logic $\GL$ in a topological space $\langle X, \tau \rangle$ is characterized by a property of $\tau$.

\begin{defn}[Scattered spaces]
A topological space $\langle X, \tau \rangle$ is said to be \textit{scattered} if for any $Y \subseteq X$, $Y \neq \varnothing$ implies $Y \setminus d_\tau(Y) \neq \varnothing$.  %that is,
%\begin{align*}
%	\exists x \in Y\ \exists U \in \tau (Y \cap U = \{x\}).
%\end{align*}
\end{defn}

%Simmons \cite{Sim75} and Esakia \cite{Esa81} showed that $\GL$ is topologically sound with respect to scattered spaces.

\begin{fact}[Simmons \cite{Sim75}; Esakia \cite{Esa81}]\label{SimEsa}
For any topological space $\langle X, \tau \rangle$, $\GL \subseteq \Log(X, \tau)$ if and only if $\langle X, \tau \rangle$ is scattered.
\qed
\end{fact}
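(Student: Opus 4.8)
The plan is to reduce the statement to a single condition on the derived-set operator and then read off both implications from a ``crowded subset'' reformulation of scatteredness. Since every $\Log(X,\tau)$ is a normal modal logic and $\GL$ is by definition the least normal modal logic containing $\Box(\Box p\to p)\to\Box p$, we have $\GL\subseteq\Log(X,\tau)$ if and only if the L\"ob axiom itself is valid in $\langle X,\tau\rangle$. First I would rewrite this axiom in its dual form $\Diamond p\to\Diamond(p\wedge\Box\neg p)$, which is equivalent to it over any normal modal logic. Using $v(\Diamond\varphi)=d_\tau(v(\varphi))$ together with $v(\Box\neg p)=cd_\tau(\overline{v(p)})=\overline{d_\tau(v(p))}$, a routine computation (writing $A:=v(p)$, which ranges over all subsets as $v$ varies) turns validity of this formula into the purely topological condition
\[
 d_\tau(A)\subseteq d_\tau(A\setminus d_\tau(A)) \quad\text{for every } A\subseteq X,
\]
which I will call $(\ast)$. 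So it suffices to prove that $(\ast)$ holds if and only if $\langle X,\tau\rangle$ is scattered.

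The convenient reformulation is that $\langle X,\tau\rangle$ is scattered exactly when it has no nonempty \emph{crowded} subset, i.e.\ no nonempty $Y\subseteq X$ with $Y\subseteq d_\tau(Y)$; indeed a nonempty $Y$ with $Y\setminus d_\tau(Y)=\varnothing$ is precisely a nonempty $Y$ contained in $d_\tau(Y)$. With this in hand the easy direction ``$(\ast)\Rightarrow$ scattered'' is immediate by contraposition: if $\langle X,\tau\rangle$ is not scattered, pick a nonempty crowded $Y$ and set $A:=Y$. Then $A\setminus d_\tau(A)=Y\setminus d_\tau(Y)=\varnothing$, so $d_\tau(A\setminus d_\tau(A))=d_\tau(\varnothing)=\varnothing$, whereas $d_\tau(A)=d_\tau(Y)\supseteq Y\neq\varnothing$; hence $(\ast)$ fails.

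The converse ``scattered $\Rightarrow(\ast)$'' is the crux, and I would again argue contrapositively by manufacturing a crowded set from a failure of $(\ast)$. Suppose $d_\tau(A)\not\subseteq d_\tau(A\setminus d_\tau(A))$ and fix $x\in d_\tau(A)$ with $x\notin d_\tau(A\setminus d_\tau(A))$. The latter yields an open $U\ni x$ with $(U\setminus\{x\})\cap A\subseteq d_\tau(A)$. Put $F:=\{x\}\cup\big((U\cap A)\setminus\{x\}\big)$; since $x\in d_\tau(A)$ the set $(U\cap A)\setminus\{x\}$ is nonempty, so $F\neq\varnothing$. I would then verify $F\subseteq d_\tau(F)$ point by point: for $x$, every neighborhood $W\subseteq U$ meets $A\setminus\{x\}$ (as $x\in d_\tau(A)$), and any such witness lies in $(U\cap A)\setminus\{x\}\subseteq F$; for $z\in(U\cap A)\setminus\{x\}$ we have $z\in d_\tau(A)$, so every neighborhood $W\subseteq U$ of $z$ meets $A\setminus\{z\}$, and this witness lies in $F\setminus\{z\}$ whether it differs from or equals $x$. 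Thus $F$ is crowded, so $\langle X,\tau\rangle$ is not scattered.

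I expect the main obstacle to be exactly this last verification in spaces that are not $T_D$: a neighborhood of $z$ may meet $A\setminus\{z\}$ only at the point $x$, so in general one cannot separate the chosen limit-point witnesses from $x$. The device that absorbs this is to place $x$ itself into $F$, which renders such a coincidence harmless; getting this bookkeeping right is the real content of the proof. (It is worth noting separately that scatteredness in fact forces $d_\tau$ to be transitive, in agreement with $\GL\vdash\Box p\to\Box\Box p$, and this can be established by the same crowded-set technique, using $x\in d_\tau(d_\tau(A))$ in place of $x\in d_\tau(A)$.)
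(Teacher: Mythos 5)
Your proof is correct. The paper itself gives no proof of this Fact---it imports it from Simmons and Esakia with a \qed---and your argument (reducing $\GL \subseteq \Log(X,\tau)$ to validity of the dual L\"ob formula $\Diamond p \to \Diamond(p \wedge \Box\neg p)$, hence to $d_\tau(A) \subseteq d_\tau(A \setminus d_\tau(A))$, and then proving this equivalent to scatteredness via crowded subsets, with the key device of adjoining $x$ to $F$ to absorb the non-$T_D$ coincidence) is precisely the standard proof found in the cited literature, e.g.\ in Beklemishev and Gabelaia \cite{BG14}.
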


The following fact is a summary of basic properties of derived sets.

\begin{fact}\label{dset}
Let $\langle X, \tau \rangle$ be a topological space and let $Y, Z \subseteq X$.
\begin{enumerate}
	\item $d_\tau(\varnothing) = \varnothing$; 
	\item If $Y \subseteq Z$, then $d_\tau(Y) \subseteq d_\tau(Z)$; 
	\item $d_\tau(Y \cup Z) = d_\tau(Y) \cup d_\tau(Z)$; 
	\item $Y \in \tau \iff d_\tau(\overline{Y}) \cap Y = \varnothing$; 
	\item If $\langle X, \tau \rangle$ is scattered, then $d_{\tau}(d_{\tau}(Y)) \subseteq d_{\tau}(Y)$ (cf.~\cite[Corollary 2.3]{BG14}). 
\end{enumerate}
\qed
\end{fact}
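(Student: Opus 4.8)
The plan is to obtain (1)--(4) directly from the definition of $d_\tau$ and to treat (5), the transitivity of the derived-set operator under scatteredness, as the only substantive item.

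For (1), any $x$ fails the defining condition already at the neighborhood $X$, since $X \cap \varnothing = \varnothing$ contains no witness. Item (2) is immediate: a witness $y \neq x$ lying in $U \cap Y$ also lies in $U \cap Z$. For (3), the inclusion $d_\tau(Y) \cup d_\tau(Z) \subseteq d_\tau(Y \cup Z)$ follows from (2); for the converse I would argue contrapositively, taking open neighborhoods $U$ and $V$ of $x$ witnessing $x \notin d_\tau(Y)$ and $x \notin d_\tau(Z)$ respectively, and noting that $U \cap V$ is then an open neighborhood of $x$ containing no point of $(Y \cup Z) \setminus \{x\}$, so $x \notin d_\tau(Y \cup Z)$. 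For (4), if $Y \in \tau$ then $Y$ itself is an open neighborhood of each of its points witnessing that no such point lies in $d_\tau(\overline{Y})$; conversely, if $d_\tau(\overline{Y}) \cap Y = \varnothing$ then each $x \in Y$ has an open neighborhood $U$ with $U \cap \overline{Y} \subseteq \{x\}$, and since $x \in Y$ this forces $U \subseteq Y$, so $Y$ is a union of open sets.

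The real work is (5). First I would record the ``weak transitivity'' inclusion $d_\tau(d_\tau(Y)) \subseteq Y \cup d_\tau(Y)$, which holds in \emph{every} topological space: if $x \in d_\tau(d_\tau(Y))$ but $x \notin Y \cup d_\tau(Y)$, choose an open $U \ni x$ with $U \cap Y = \varnothing$ (possible since $x \notin d_\tau(Y)$ and $x \notin Y$); then $U$ meets $d_\tau(Y)$ in some $z \neq x$, and $U$ is a neighborhood of $z$, so $U$ must meet $Y$, a contradiction. Granting this, it remains to show that scatteredness excludes the case $x \in (Y \setminus d_\tau(Y)) \cap d_\tau(d_\tau(Y))$. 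Fixing such an $x$ and an open $U \ni x$ with $U \cap Y = \{x\}$, I would apply the scattered condition to the set $A := (U \cap d_\tau(Y)) \cup \{x\}$, which is nonempty (it contains $x$, and also some $z \neq x$ since $x \in d_\tau(d_\tau(Y))$). Scatteredness yields an isolated point $c$ of $A$, i.e.\ an open $V \ni c$ with $V \cap A = \{c\}$; I then split on whether $c = x$. If $c = x$, then $V \cap U$ is a neighborhood of $x$ disjoint from $d_\tau(Y)$, contradicting $x \in d_\tau(d_\tau(Y))$. If $c \neq x$, then $c \in d_\tau(Y)$ forces the neighborhood $V \cap U$ of $c$ to contain a point of $Y$ different from $c$, which must be $x$ (as $U \cap Y = \{x\}$); but then $x \in V$, contradicting $V \cap A = \{c\}$. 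Either way we reach a contradiction, so $d_\tau(d_\tau(Y)) \subseteq d_\tau(Y)$.

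The main obstacle is the bookkeeping in (5): the natural ``one-step'' unravelling of $x \in d_\tau(d_\tau(Y))$ keeps returning $x$ itself as the only available witness in $Y$, so the argument has to be routed through a set chosen precisely so that isolating a point of it rules out $x$ as a witness. I note that (5) can alternatively be derived in one line from Fact \ref{SimEsa}: scatteredness gives $\GL \subseteq \Log(X,\tau)$, hence the theorem $\Box p \to \Box \Box p$ of $\GL$ is valid; evaluating it at a valuation with $v(p) = \overline{Y}$ and unwinding $cd_\tau$ turns $v(\Box p) \subseteq v(\Box \Box p)$ into $d_\tau(d_\tau(Y)) \subseteq d_\tau(Y)$.
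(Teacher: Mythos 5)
Your proposal is correct in all five items. Note, however, that the paper offers no proof of Fact \ref{dset} at all: items 1--4 are treated as routine consequences of the definition of $d_\tau$ (exactly the computations you carry out), and item 5 is delegated to the literature via the citation to Beklemishev and Gabelaia, where it is obtained from the characterization of scatteredness by the inequality $d_\tau(A) \subseteq d_\tau(A \setminus d_\tau(A))$, which is essentially the validity of L\"ob's axiom. So your contribution is a genuinely self-contained alternative for item 5: the decomposition into the space-independent ``weak transitivity'' $d_\tau(d_\tau(Y)) \subseteq Y \cup d_\tau(Y)$, followed by the elimination of the residual case $x \in (Y \setminus d_\tau(Y)) \cap d_\tau(d_\tau(Y))$ by applying scatteredness to the auxiliary set $A = (U \cap d_\tau(Y)) \cup \{x\}$, is sound --- both branches of the case split on the isolated point $c$ check out, and the choice of $A$ (adjoining $x$ so that isolating a point of $A$ forbids $x$ as the witness in $Y$) is exactly the right device to break the loop you identify, where naive unravelling keeps returning $x$ itself. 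Your one-line alternative via Fact \ref{SimEsa} is formally available within the paper (take $v(p) = \overline{Y}$ in the $\GL$-theorem $\Box p \to \Box\Box p$ and unwind $cd_\tau$), and it mirrors the paper's own citation route; but you are right to present it only as a remark, since standard proofs of the soundness half of Fact \ref{SimEsa} establish precisely this derived-set inequality (or the stronger $d_\tau(A) \subseteq d_\tau(A \setminus d_\tau(A))$) along the way, so leaning on it as the primary proof would risk circularity that your direct isolated-point argument avoids.
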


Each transitive and irreflexive Kripke frame can be considered as a topological space having the same logic via the topology of $R$-upward closed subsets.

\begin{defn}\label{upset}
Let $\langle W, R \rangle$ be a Kripke frame.
\begin{itemize}
	\item For each $x \in W$, $R(x) : = \{y \in W \mid x R y\}$; 
	\item A subset $Y \subseteq W$ is said to be \textit{$R$-upward closed} if for any $x \in Y$, $R(x) \subseteq Y$; 
	\item Define $\tau_R : = \{Y \subseteq W \mid Y$ is $R$-upward closed $\}$. 
\end{itemize}
\end{defn}

\begin{defn}[Alexandroff spaces]\label{Alexandroff}
A topological space $\langle X, \tau \rangle$ is said to be \textit{Alexandroff} if for any family $\{U_i\}_{i \in I}$ of members of $\tau$, $\bigcap_{i \in I} U_i \in \tau$.
\end{defn}

\begin{fact}[cf.~van Benthem and Bezhanishvili \cite{vBB07}]\label{KSTS}
Let $\langle W, R \rangle$ be a Kripke frame. 
Then, 
\begin{enumerate}
	\item $\langle W, \tau_R \rangle$ is an Alexandroff topological space; 
	\item If $R$ is transitive and irreflexive, then for any $Y \subseteq W$, $d_{\tau_R}(Y) = \{x \in W \mid R(x) \cap Y \neq \varnothing \}$; 
	\item If $R$ is transitive and irreflexive, then $\Log(W, R) = \Log(W, \tau_R)$. 
\end{enumerate}
\qed
\end{fact}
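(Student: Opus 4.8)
The plan is to treat the three items in turn, with the second supplying the geometric content that the third merely repackages.

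For item (1), I would verify directly that $\tau_R$ is a topology closed moreover under arbitrary intersections. Both $W$ and $\varnothing$ are $R$-upward closed (the latter vacuously). If $\{U_i\}_{i\in I}$ is any family of $R$-upward closed sets and $x\in\bigcap_{i\in I}U_i$, then $R(x)\subseteq U_i$ for every $i$, whence $R(x)\subseteq\bigcap_{i\in I}U_i$; the same one-line argument with $\bigcup$ in place of $\bigcap$ (picking a single $i_0$ with $x\in U_{i_0}$) handles unions. Thus $\tau_R$ satisfies the topology axioms and is closed under arbitrary intersection, i.e. Alexandroff.

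For item (2), the key is to compute the smallest open neighborhood of each point. Since $\langle W,\tau_R\rangle$ is Alexandroff, $U_x:=\bigcap\{U\in\tau_R : x\in U\}$ is itself open and is the least open set containing $x$. I claim $U_x=\{x\}\cup R(x)$ when $R$ is transitive: on one hand $\{x\}\cup R(x)$ is $R$-upward closed (if $w\in R(x)$ then $R(w)\subseteq R(x)$ by transitivity) and contains $x$, so $U_x\subseteq\{x\}\cup R(x)$; on the other hand every open $U\ni x$ must contain $R(x)$ by upward closure, forcing $\{x\}\cup R(x)\subseteq U_x$. Now unwinding the definition of the derived set, $x\in d_{\tau_R}(Y)$ holds iff the witnessing condition holds for every open neighborhood of $x$, which—because $U_x$ is the least such neighborhood—is equivalent to the single statement $\exists y\neq x\,(y\in U_x\cap Y)$. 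Since $y\neq x$ together with $y\in\{x\}\cup R(x)$ forces $y\in R(x)$, and conversely irreflexivity guarantees that every $y\in R(x)$ automatically satisfies $y\neq x$ (as $xRy$ rules out $y=x$), this is exactly $R(x)\cap Y\neq\varnothing$. This establishes the formula, and I expect this neighborhood computation to be the main obstacle: both transitivity (to pin down $U_x$) and irreflexivity (to discharge the $y\neq x$ clause) enter essentially here.

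For item (3), I would set up the routine correspondence between Kripke models and topological valuations on the common frame. Given a Kripke model $\langle W,R,\Vdash\rangle$, define a valuation $v$ on $\langle W,\tau_R\rangle$ by $v(p):=\{x : x\Vdash p\}$ for propositional variables and extend via the clauses of Definition \ref{ValTop}; conversely a valuation $v$ induces a forcing relation by $x\Vdash p :\iff x\in v(p)$. In either direction one proves $v(\varphi)=\{x : x\Vdash\varphi\}$ by induction on $\varphi$. The Boolean cases match the corresponding clauses of Definitions \ref{KripkeFrames} and \ref{ValTop} verbatim, while the two modal cases are precisely where item (2) enters: for $\Diamond$ one has $v(\Diamond\varphi)=d_{\tau_R}(v(\varphi))=\{x : R(x)\cap v(\varphi)\neq\varnothing\}$, which by the induction hypothesis equals $\{x : x\Vdash\Diamond\varphi\}$, and the $\Box$ case follows dually from the identity $cd_{\tau_R}(Z)=\overline{d_{\tau_R}(\overline{Z})}$ together with the defining clause $x\Vdash\Box\varphi\iff\forall y(xRy\Rightarrow y\Vdash\varphi)$. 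Since a formula fails to be valid in $\langle W,R\rangle$ exactly when some forcing relation falsifies it at some point, and likewise for $\langle W,\tau_R\rangle$ and valuations, this correspondence shows that the same formulas are valid on both sides, i.e. $\Log(W,R)=\Log(W,\tau_R)$.
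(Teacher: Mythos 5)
Your proof is correct; the paper states this as a known Fact (citing van Benthem and Bezhanishvili) without giving a proof, and your argument---closure of $\tau_R$ under arbitrary intersections for (1), the minimal open neighborhood $U_x=\{x\}\cup R(x)$ computed via transitivity for (2) with irreflexivity discharging the $y\neq x$ clause, and the induction transferring (2) to the $\Diamond$ and $\Box$ clauses for (3)---is precisely the standard argument behind that citation. Both hypotheses enter exactly where needed, and no step has a gap.
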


Alexandroff spaces will be studied precisely in Sections \ref{TopSem} and \ref{IL}. 

%\begin{prop}
%Let $(X, R, \Vdash)$ be a Kripke model, and suppose that a valuation $v$ on $(X, \tau_R)$ satisfies
%\begin{align*}
%	v(p) = \{x \in X \mid x \Vdash p\}
%\end{align*}
%for any propositional variable $p$. Then for any $\mathcal{L}(\Box)$-formula $\varphi$,
%\begin{align*}
%	v(\varphi) = \{x \in X \mid x \Vdash \varphi\}.
%\end{align*}
%\end{prop}

%In the sense of the following corollary, transitive and conversely well-founded Kripke frames can viewed as Alexandroff spaces.

As in the case of Kripke semantics, we introduce the consequence relation $\models_L^T$ with respect to topological semantics where $T$ stands for ``Topology''. 

\begin{defn}
Let $L$ be a normal modal logic, $\Gamma$ be a set of $\mathcal{L}(\Box)$-formulas and $\varphi$ be an $\mathcal{L}(\Box)$-formula.
\begin{itemize}
	\item $\Gamma \models_L^T \varphi$ $:\iff$ for any topological space $\langle X, \tau \rangle$ satisfying $L \subseteq \Log(X,\tau)$, any valuation $v$ on $X$ and any $x \in X$, if $x \in v(\psi)$ for all $\psi \in \Gamma$, then $x \in v(\varphi)$. 
\end{itemize}
\end{defn}

From Facts \ref{KCGL} and \ref{KSTS}, we obtain the topological completeness of $\GL$. 

\begin{fact}[Topological completeness of $\GL$]
For any $\mathcal{L}(\Box)$-formula $\varphi$, $\varnothing \vdash_\GL \varphi$ if and only if $\varnothing \models_\GL^T \varphi$. 
\qed
\end{fact}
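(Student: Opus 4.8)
The plan is to prove the two directions separately, getting soundness directly and completeness by transporting a Kripke refutation to a topological one along the correspondence of Fact~\ref{KSTS}. The forward direction is immediate. Suppose $\varnothing \vdash_\GL \varphi$; since $\bigwedge \varnothing = \top$ this means $\varphi \in \GL$. If $\langle X, \tau \rangle$ is any topological space with $\GL \subseteq \Log(X, \tau)$ (by Fact~\ref{SimEsa} these are exactly the scattered spaces, though I need not invoke that), then $\varphi \in \Log(X, \tau)$, so $v(\varphi) = X$ for every valuation $v$ and hence $x \in v(\varphi)$ for every $x \in X$. As the hypothesis ``$x \in v(\psi)$ for all $\psi \in \varnothing$'' is vacuous, this is precisely $\varnothing \models_\GL^T \varphi$.

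For completeness I would argue contrapositively. Assume $\varnothing \nvdash_\GL \varphi$. By the Kripke completeness of $\GL$ (Fact~\ref{KCGL}), $\varnothing \models_\GL^K \varphi$ fails, so there is a Kripke model $\langle W, R, \Vdash \rangle$ with $\GL \subseteq \Log(W, R)$ and a point $x \in W$ with $x \nVdash \varphi$. By Fact~\ref{CWF} the relation $R$ is transitive and conversely well-founded; since a point with $w R w$ would give an infinite $R$-increasing sequence, $R$ is in particular transitive and irreflexive, so Fact~\ref{KSTS} applies to $\langle W, R \rangle$. I would then define a valuation $v$ on the Alexandroff space $\langle W, \tau_R \rangle$ by $v(p) := \{w \in W \mid w \Vdash p\}$ on propositional variables and prove, by induction on the structure of $\psi$, that $v(\psi) = \{w \in W \mid w \Vdash \psi\}$ for every $\mathcal{L}(\Box)$-formula $\psi$. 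The Boolean cases follow at once by comparing the clauses of Definitions~\ref{KripkeFrames} and~\ref{ValTop}, and the modal cases reduce to the description $d_{\tau_R}(Y) = \{w \mid R(w) \cap Y \neq \varnothing\}$ of Fact~\ref{KSTS}(2): for $\Diamond$ this yields $v(\Diamond \psi) = d_{\tau_R}(v(\psi)) = \{w \mid \exists y (wRy\ \&\ y \Vdash \psi)\}$, matching the Kripke clause, with the case of $\Box$ dual. In particular $x \notin v(\varphi)$, while $\GL \subseteq \Log(W, R) = \Log(W, \tau_R)$ by Fact~\ref{KSTS}(3); so $\langle W, \tau_R \rangle$ together with $v$ and $x$ witnesses that $\varnothing \models_\GL^T \varphi$ fails.

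The only genuinely substantive step is this inductive truth lemma identifying the topological valuation with Kripke satisfaction, and inside it the modal clauses, which are powered entirely by Fact~\ref{KSTS}(2). Accordingly, the point I would be most careful to justify is the passage from converse well-foundedness to irreflexivity, since irreflexivity is exactly what licenses that description of the derived set. Beyond this bookkeeping I expect no difficulty, as the frame-level identity $\Log(W, R) = \Log(W, \tau_R)$ in Fact~\ref{KSTS}(3) already records that the two semantics validate the same formulas on transitive irreflexive frames.
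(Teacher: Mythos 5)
Your proposal is correct and follows essentially the same route as the paper, which derives this fact directly by combining the Kripke completeness of $\GL$ (Fact~\ref{KCGL}) with the frame-to-space correspondence of Fact~\ref{KSTS}; your inductive truth lemma is simply an unfolding of the content of Fact~\ref{KSTS}(3), and since $\Gamma = \varnothing$, the frame-level identity $\Log(W,R) = \Log(W,\tau_R)$ alone would already suffice without tracking satisfaction at the specific point $x$.
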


Moreover, as opposed to Fact \ref{SKInCompl}, Shehtman proved that $\GL$ is strongly complete with respect to topological semantics.\footnote{Actually, Shehtman proved that $\GL$ is strongly complete with respect to neighborhood semantics. 
Esakia \cite{Esa81} proved that for $\GL$, neighborhood semantics and topological semantics coincide, and so we can state Shehtman's theorem as the topological strong completeness theorem of $\GL$. }

\begin{fact}[Topological strong completeness of $\GL$ (Shehtman {\cite[Theorem 3.3]{She98}})]
Let $\Gamma$ be any set of $\mathcal{L}(\Box)$-formulas and $\varphi$ be any $\mathcal{L}(\Box)$-formula. 
Then, $\Gamma \vdash_\GL \varphi$ if and only if $\Gamma \models_\GL^T \varphi$. 
\qed
\end{fact}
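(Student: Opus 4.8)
The plan is to prove the two directions separately. The direction $\Gamma \vdash_\GL \varphi \Rightarrow \Gamma \models_\GL^T \varphi$ is the soundness half and is routine: if $\bigwedge \Gamma_0 \to \varphi \in \GL$ for some finite $\Gamma_0 \subseteq \Gamma$, then by Fact \ref{SimEsa} this formula is valid in every scattered space, and since $\models_\GL^T$ quantifies precisely over spaces $\langle X,\tau\rangle$ with $\GL \subseteq \Log(X,\tau)$, i.e., over scattered spaces, any point satisfying all of $\Gamma$ satisfies $\bigwedge \Gamma_0$ and hence $\varphi$. So I would concentrate on the converse, which I would prove contrapositively: assuming $\Gamma \nvdash_\GL \varphi$, I would build a single scattered space, a valuation $v$ and a point $r$ with $r \in v(\psi)$ for every $\psi \in \Gamma$ and $r \notin v(\varphi)$.

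The construction begins by collecting finite pieces. For each finite $\Gamma_0 \subseteq \Gamma$ we have $\nvdash_\GL \bigwedge \Gamma_0 \to \varphi$, so $\bigwedge \Gamma_0 \wedge \neg\varphi$ is $\GL$-consistent and, by the Kripke completeness of $\GL$ (Fact \ref{KCGL}) together with its finite model property, is satisfied at the root $w_{\Gamma_0}$ of a finite rooted transitive conversely well-founded model $M_{\Gamma_0} = \langle W_{\Gamma_0}, R_{\Gamma_0}, \Vdash_{\Gamma_0}\rangle$; passing to the generated submodel I may assume $W_{\Gamma_0} = \{w_{\Gamma_0}\} \cup R_{\Gamma_0}(w_{\Gamma_0})$. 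Next I would fix the combinatorial glue: on the index set $I$ of all finite subsets of $\Gamma$, the cones $\widehat{\Gamma_1} := \{\Gamma_0 \in I \mid \Gamma_1 \subseteq \Gamma_0\}$ have the finite intersection property, so there is an ultrafilter $U$ on $I$ containing every $\widehat{\Gamma_1}$. The ultrabouquet is then $X := \{r\} \cup \bigsqcup_{i \in I} S_i$, where $r$ is a fresh root and $S_i := R_i(w_i)$ is the set of proper successors of the root of $M_i$. I declare a set $V$ open when $V \cap S_i$ is $R_i$-upward closed in $S_i$ for every $i$ and, in case $r \in V$, additionally $\{i \in I \mid S_i \subseteq V\} \in U$; the valuation is defined on variables by $v(p) \cap S_i := \{x \in S_i \mid x \Vdash_i p\}$ and $r \in v(p) :\iff \{i \mid w_i \Vdash_i p\} \in U$, and extended to all formulas via the clauses of Definition \ref{ValTop}.

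The heart of the argument is a truth lemma proved by induction on formulas. For $x \in S_i$ the minimal neighborhood of $x$ in $X$ coincides with $\{x\} \cup R_i(x) \subseteq S_i$, so the derived-set operator acts locally exactly as the Kripke operator of Fact \ref{KSTS}(2), giving $v(\chi) \cap S_i = \{x \in S_i \mid x \Vdash_i \chi\}$. For the root I would establish the key identity
\[
r \in d_\tau(Y) \iff \{i \in I \mid S_i \cap Y \neq \varnothing\} \in U
\]
(the forward direction using that any open $V \ni r$ forces $\{i \mid S_i \subseteq V\} \in U$ to meet $\{i \mid S_i \cap Y \neq \varnothing\}$, the backward direction using the open set $\{r\} \cup \bigcup_{S_i \cap Y = \varnothing} S_i$). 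Combined with the Boolean cases, which pass through because $U$ is an ultrafilter, and with $v(\Box\chi) = \overline{d_\tau(\overline{v(\chi)})}$, this yields $r \in v(\chi) \iff \{i \mid w_i \Vdash_i \chi\} \in U$. Since for $\psi \in \Gamma$ the cone $\widehat{\{\psi\}} \subseteq \{i \mid w_i \Vdash_i \psi\}$ lies in $U$ while $\{i \mid w_i \Vdash_i \varphi\} = \varnothing \notin U$, the point $r$ satisfies all of $\Gamma$ and refutes $\varphi$.

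Finally I would check that $\langle X, \tau\rangle$ is scattered, which by Fact \ref{SimEsa} is what places it among the spaces over which $\models_\GL^T$ ranges. Given nonempty $Y$, if $Y \subseteq \{r\}$ then $r \notin d_\tau(\{r\})$; otherwise $Y \cap S_i \neq \varnothing$ for some $i$, and an $R_i$-maximal element $y$ of the finite set $Y \cap S_i$ satisfies $R_i(y) \cap Y = \varnothing$, whence $y \notin d_\tau(Y)$; in either case $Y \setminus d_\tau(Y) \neq \varnothing$. I expect the main obstacle to be pinning down the topology at $r$ so that both demands hold at once: the neighborhood condition must be loose enough that the ultrafilter identity for $d_\tau$ at the root goes through, so that $r$ inherits the modal theory of the $w_i$, yet tight enough that $r$ is never a limit point of $\{r\}$ and that the scattered pieces $S_i$ remain undisturbed. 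It is precisely the clause ``$\{i \mid S_i \subseteq V\} \in U$'' that balances these, and verifying the modal step of the truth lemma at $r$ against it is the real work.
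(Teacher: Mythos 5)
Your proposal is correct, and its engine is the same one the paper relies on: this Fact is stated in the paper without proof, by citation to Shehtman, and the ultrabouquet construction of Section \ref{MT} is precisely the paper's generalization of that engine to bitopological $\CL$-spaces. Still, your packaging genuinely differs in three ways, each worth noting. First, the decomposition: the paper derives strong completeness from weak completeness plus a separately proved topological compactness theorem (Theorems \ref{CompCL} and \ref{equivTC}), whereas you build the single refuting scattered space directly. Second, the pieces being glued: the paper glues \emph{arbitrary} spaces validating the logic, choosing for each glue point $x_n$ only a neighborhood $Y_n$ with $Y_n \setminus \{x_n\}$ open (Fact \ref{FactSc}); since $Y_n$ need not exhaust $X_n$, the region $X \setminus Y$ consists of topological dead ends, and the paper needs Lemma \ref{deadend}, the Makinson-style Lemma \ref{preservation2} and Theorem \ref{preservation3} to show the bouquet still validates the logic. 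You instead glue rooted Kripke models with $W_i = \{w_i\} \cup R_i(w_i)$, so that in effect $Y_i$ is all of $X_i$, the dead-end region is empty, and that entire layer of lemmas evaporates; your clause ``$\{i \mid S_i \subseteq V\} \in U$'' is exactly the paper's condition (ii) of Definition \ref{UB} specialized to this case, since $V \restriction Y_i$ being open there amounts to $S_i \subseteq V$. The cost of this simplification is an appeal to Kripke completeness with the finite model property (cf.~Fact \ref{KCGL}), which is unavailable in the paper's general setting where the pieces must be permitted to be arbitrary $L$-spaces. Third, the combinatorics: the paper enumerates the (countable) $\Gamma$, satisfies the cumulative conjunctions $\chi_n$ at $x_n$, and takes a non-principal ultrafilter on $\mathbb{N}$ — non-principality being what makes the cofinite sets $\{n \mid x_n \in v_n(\psi_i)\}$ large — whereas your ultrafilter on the finite subsets of $\Gamma$ extending the cones achieves the same with no enumeration and no non-principality hypothesis, and would even survive an uncountable language. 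Two small tidyings: your key identity $r \in d_\tau(Y) \iff \{i \mid S_i \cap Y \neq \varnothing\} \in U$ is exactly the Kripke shadow of the paper's Lemma \ref{TruthL2}, and your scatteredness check does not need finiteness of the models — converse well-foundedness of $R_i$ already yields an $R_i$-maximal element of $Y \cap S_i$. It is also worth observing, as a sanity check on your closing worry, that your space is deliberately \emph{not} Alexandroff at $r$ (an arbitrary intersection of the sets $\{i \mid S_i \subseteq V_j\}$ need not lie in $U$), which is exactly how the construction escapes the Kripke-semantic failure of strong completeness recorded in Fact \ref{SKInCompl}.
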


\subsection{Conservativity and interpretability logics and their Visser semantics}

In this section, we introduce the conservativity logic $\CL$ and its extensions. 
Also we introduce their relational semantics. 
The language $\mathcal{L}(\Box, \rhd)$ is obtained from $\mathcal{L}(\Box)$ by adding the binary modal operator $\rhd$. 

\begin{defn}[The conservativity logic $\CL$]\label{CL}
The conservativity logic $\CL$ is a logic in the language $\mathcal{L}(\Box, \rhd)$ obtained from $\GL$ by adding the following axioms: 
\begin{description}
\item [$\J{1}$] $\Box (p \to q) \to (p \rhd q)$;
\item [$\J{2}$] $(p \rhd q) \land (q \rhd r) \to (p \rhd r)$;
\item [$\J{3}$] $(p \rhd r) \land (q \rhd r) \to ((p \lor q) \rhd r)$;
\item [$\J{4}$] $(p \rhd q) \to (\Diamond p \to \Diamond q)$.
\end{description}
\end{defn}

We say that a set $L$ of $\mathcal{L}(\Box, \rhd)$-formulas is a \textit{normal extension of $\CL$} if $\CL \subseteq L$ and $L$ is closed under Modus Ponens, Necessitation and Substitution. 
There are well-known normal extensions of $\CL$ having some of the following additional axioms: 

\begin{description}
\item [$\J{5}$] $\Diamond p \rhd p$; 
\item [$\M$] $(p \rhd q) \to ((p \land \Box r) \rhd (q \land \Box r))$;
\item [$\axP$] $(p \rhd q) \to \Box (p \rhd q)$; 
\item [$\axW$] $(p \rhd q) \to (p \rhd (q \land \Box \neg p))$. 
\end{description}

The smallest normal extension containing $\M$ is called $\CLM$. 
In this case, we write $\CLM = \CL + \M$. 
The logics $\CL$ and $\CLM$ were introduced by Ignatiev \cite{Ign91}. 
Also let $\IL = \CL + \J{5}$, $\ILM = \IL + \M$, $\ILP = \IL + \axP$ and $\ILW = \IL + \axW$. 
The logic $\IL$ is called the \textit{basic interpretability logic}. 

One of well-known relational semantics of $\CL$ and its extensions is \textit{Veltman semantics} which was introduced by de Jongh and Veltman \cite{deJVel90}. 
A triple $\langle W, R, \{S_w\}_{w \in W} \rangle$ is called a \textit{Veltman frame} if $\langle W, R \rangle$ is a transitive and conversely well-founded Kripke frame and for each $w \in W$, $S_w$ is a binary relation on $R(w)$ satisfying some additional conditions. 
One of the purposes of the present paper is to find an appropriate topological semantics of extensions of $\CL$. 
From the point of view of Fact \ref{KSTS}, every binary relation $P$ on a set $W$ is associated to the topology $\tau_P$ on $W$ consisting of $P$-upward closed subsets. 
However, each binary relation $S_w$ of Veltman frames is not a binary relation on full $W$, and so Veltman frames are not directly recognized as topological frames. 

For this reason, we adopt the alternative relational semantics of extensions of $\CL$ introduced by Visser \cite{Vis88}.

\begin{defn}[Visser frames and models]\label{VisFr}\leavevmode
\begin{itemize}
	\item A triple $\langle W, R, S \rangle$ is said to be a \textit{Visser frame} if $\langle W, R \rangle$ is a transitive and conversely well-founded Kripke frame and $S$ is a binary transitive and reflexive relation on $W$;
	\item A quadruple $\langle W, R, S, \Vdash \rangle$ is said to be a \textit{Visser model} if $\langle W, R, S \rangle$ is a Visser frame and $\Vdash$ is a binary relation as in Definition \ref{KripkeFrames} with the following additional clause: 
\begin{itemize}
		\item $x \Vdash \varphi \rhd \psi \iff \forall y\in W [x R y\ \&\ y \Vdash \varphi \Rightarrow \exists z \in W (x R z\ \&\ y S z\ \&\ z \Vdash \psi) ]$. 
\end{itemize}
	\item The validity of an $\mathcal{L}(\Box, \rhd)$-formula in Visser frames and models, and the logic $\Log(W, R, S)$ of $\langle W, R, S \rangle$ are defined as in Definition \ref{KripkeFrames}. 
\end{itemize}
\end{defn}

Visser actually introduced the notion of Visser frames as a relational semantics for extensions of $\IL$, and Definition \ref{VisFr} is an adaptation of Visser's definition to our framework obtained by removing the condition $R \subseteq S$ from his original definition. 
Visser frames are also known as simplified Veltman frames. 
Then, the following fact holds. 

\begin{fact}[See Ignatiev \cite{Ign91} and Visser \cite{Vis88}]\label{VisSem}
Let $\langle W, R, S \rangle$ be any Visser frame. 
Then, 
\begin{enumerate}
	\item $\Log(W, R, S)$ is a normal extension of $\CL$; 
	\item If $\forall x, y, z \in W[x S y R z \Rightarrow x R z]$, then $\CLM \subseteq \Log(W, R, S)$; 
	\item If $R \subseteq S$, then $\IL \subseteq \Log(W, R, S)$; 
	\item If $R \subseteq S$ and $\forall x, y, z \in W[x R y S z \Rightarrow x R z]$, then $\ILP \subseteq \Log(W, R, S)$; 
	\item If $R \subseteq S$ and the composition $R \circ S$ is conversely well-founded, then $\ILW \subseteq \Log(W, R, S)$. 
\end{enumerate}
\qed
\end{fact}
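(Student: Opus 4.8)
The plan is to read all five items as soundness claims and dispatch them by a single uniform local argument: fix a Visser model $\langle W, R, S, \Vdash \rangle$ over the frame in question, fix a world $x$, assume the relevant antecedent holds at $x$, and derive the consequent by unwinding the satisfaction clause for $\rhd$. Since $\Log(W,R,S)$ is by construction closed under Modus Ponens, Necessitation and Substitution and contains all tautologies together with the distribution axiom, and since the underlying frame $\langle W, R \rangle$ is transitive and conversely well-founded, Fact \ref{CWF} (applied to the $\Box$-fragment, which depends only on $R$) already yields validity of the $\GL$-axioms. Hence for item (1) it remains to check that $\J1$--$\J4$ are valid, and for items (2)--(5) that the single extra axiom $\M$, $\J5$, $\axP$, $\axW$ respectively is valid under the stated frame condition.

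For item (1) I would verify the four axioms directly. For $\J1$, assuming $x \Vdash \Box(p \to q)$ and taking any $R$-successor $y$ of $x$ with $y \Vdash p$, one gets $y \Vdash q$, and reflexivity of $S$ lets me witness the existential in the $\rhd$-clause by $z = y$. For $\J2$, the two premises produce witnesses $z$ and then $w$, and transitivity of $S$ glues $ySz$ and $zSw$ into $ySw$. Axiom $\J3$ is a routine case split on which disjunct an $R$-successor satisfies, and $\J4$ feeds the witness supplied by $p \rhd q$ straight into the $\Diamond$-clause. Thus item (1) rests only on reflexivity and transitivity of $S$, which hold in every Visser frame.

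Items (2)--(4) each add one axiom whose validity I would extract from the corresponding composition condition. For $\M$, given a witness $z$ for $p \rhd q$ together with an $R$-successor $w$ of $z$, the hypothesis $S \circ R \subseteq R$ (i.e.\ $xSyRz \Rightarrow xRz$) turns $ySzRw$ into $yRw$, so that $y \Vdash \Box r$ transfers $\Box r$ to $z$. For $\J5$ under $R \subseteq S$, an $R$-successor $y$ of $x$ satisfying $\Diamond p$ has an $R$-successor $y'$ with $y' \Vdash p$; transitivity of $R$ gives $xRy'$ and $R \subseteq S$ gives $ySy'$, so $z = y'$ witnesses the clause. For $\axP$ under $R \circ S \subseteq R$, I would fix $y$ with $xRy$ and show $y \Vdash p \rhd q$: a witness $w$ for $x \Vdash p \rhd q$ applied to an $R$-successor $u$ of $y$ satisfies $uSw$, and $yRuSw$ together with $R \circ S \subseteq R$ yields $yRw$, making $w$ the required witness at $y$.

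The real obstacle is item (5), the axiom $\axW$, whose validity I expect to require a genuine well-foundedness argument rather than a one-step unwinding. Given $x \Vdash p \rhd q$ and an $R$-successor $y_0$ of $x$ with $y_0 \Vdash p$, I would build a sequence: apply $p \rhd q$ to get $z_0$ with $xRz_0$, $y_0 S z_0$, $z_0 \Vdash q$; if $z_0 \Vdash \Box \neg p$ we are done, otherwise some $R$-successor $y_1$ of $z_0$ satisfies $p$, and applying $p \rhd q$ again (using transitivity of $R$ to see $xRy_1$) yields $z_1$, and so on. The key point is that $z_i R y_{i+1} S z_{i+1}$ makes the sequence $z_0, z_1, z_2, \ldots$ strictly $(R \circ S)$-increasing, so conversely well-foundedness of $R \circ S$ forces the construction to halt at some $z_n \Vdash q \land \Box \neg p$. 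Finally, to see that $z_n$ actually witnesses the clause at $y_0$, I would use $R \subseteq S$ and transitivity of $S$ to propagate $y_0 S z_0 S z_1 S \cdots S z_n$ (each step $z_i R y_{i+1}$ giving $z_i S y_{i+1}$, hence $z_i S z_{i+1}$), yielding $y_0 S z_n$. Checking that this $S$-chaining survives all the way down the sequence is the delicate bookkeeping I would need to get right.
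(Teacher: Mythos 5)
Your proof is correct. The paper states Fact \ref{VisSem} without proof, deferring to Ignatiev and Visser; your axiom-by-axiom soundness verification under the respective frame conditions --- including the chain construction $z_0, z_1, \ldots$ with $z_i R y_{i+1} S z_{i+1}$ terminated by converse well-foundedness of $R \circ S$, and the $S$-transitivity bookkeeping giving $y_0 S z_n$, for $\axW$ --- is precisely the standard argument behind the cited fact.
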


In Section \ref{IL}, we will investigate the condition $R \subseteq S$ of Visser frames from a topological viewpoint. 

We also define the consequence relation $\models_L^V$ with respect to Visser semantics. 

\begin{defn}
Let $L$ be a normal extension of $\CL$, $\Gamma$ be a set of $\mathcal{L}(\Box, \rhd)$-formulas and $\varphi$ be an $\mathcal{L}(\Box, \rhd)$-formula.
\begin{itemize}
	\item $\Gamma \models_L^V \varphi$ $:\iff$ for any Visser model $\langle W, R, S, \Vdash \rangle$ satisfying \linebreak $L \subseteq \Log(W, R, S)$ and any $x \in W$, if $x \Vdash \psi$ for all $\psi \in \Gamma$, then $x \Vdash \varphi$. 
\end{itemize}
\end{defn}

Clearly, $\Gamma \vdash_L \varphi$ implies $\Gamma \models_L^V \varphi$.
The completeness theorems of $\CL$, $\CLM$, $\IL$, $\ILP$, $\ILM$ and $\ILW$ with respect to Visser semantics are proved by Ignatiev, de Jongh and Veltman and Visser. 

\begin{fact}[Visser completeness of $\CL$ and $\CLM$ (Ignatiev \cite{Ign91})]\label{ComplCL}
Let $L \in \{\CL, \CLM\}$. 
For any $\mathcal{L}(\Box, \rhd)$-formula $\varphi$, $\varnothing \vdash_L \varphi$ if and only if $\varnothing \models_L^V \varphi$. 
\qed
\end{fact}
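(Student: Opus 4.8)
The statement combines soundness, which is the easy direction already recorded in the text ($\varnothing \vdash_L \varphi$ implies $\varnothing \models_L^V \varphi$), with completeness, which carries the substantive content; so the plan is to prove the contrapositive of completeness: if $\varnothing \nvdash_L \varphi$, then some Visser model with $L \subseteq \Log(W,R,S)$ refutes $\varphi$. For soundness one checks that each axiom $\J{1}$--$\J{4}$ is valid on every Visser frame (using reflexivity and transitivity of $S$ for $\J{1}$ and $\J{2}$, and the clauses for $\Box,\Diamond,\rhd$ for $\J{3}$ and $\J{4}$), that the $\GL$ fragment is valid by Fact~\ref{CWF} since $R$ is transitive and conversely well-founded, and --- in the $\CLM$ case --- that $\M$ is valid under the frame condition of Fact~\ref{VisSem}(2); closure under Modus Ponens, Necessitation and Substitution is routine.

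For completeness I would build a finite counter-model by the method of maximal consistent sets. First fix a finite set $\Phi$ of $\mathcal{L}(\Box,\rhd)$-formulas adequate for $\varphi$: it contains $\varphi$, is closed under subformulas and single negations, and is closed under the auxiliary boxed and diamond formulas needed to drive the $\rhd$-clause for each $A \rhd B \in \Phi$. Let $W$ be the set of maximal $L$-consistent subsets of $\Phi$; this is finite, and since $\neg\varphi$ is $L$-consistent some $x_0 \in W$ contains $\neg\varphi$. Define $R$ as in the $\GL$ completeness proof: $x R y$ iff $\Box C \in x$ implies $C, \Box C \in y$ for every $\Box C \in \Phi$, together with the strictness condition that some $\Box C \in \Phi$ lies in $y$ but not in $x$; finiteness of $\Phi$ then forces $R$ to be transitive and conversely well-founded. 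The relation $S$ must be a reflexive transitive relation chosen so that the $\rhd$-clause of Definition~\ref{VisFr} mirrors membership in the $\Phi$-worlds, and for $\CLM$ it must additionally satisfy $xSyRz \Rightarrow xRz$ so that Fact~\ref{VisSem}(2) applies. With $R$ and $S$ fixed I would prove an \emph{Existence Lemma} (every consistent $\rhd$-demand at a world is realized by a genuine $S$-successor lying $R$-above that world) and then the \emph{Truth Lemma} $x \Vdash A \iff A \in x$ for all $A \in \Phi$, by induction on $A$. The Boolean and $\Box$ cases are the standard $\GL$ cases; the $\rhd$ case uses the Existence Lemma for the forward direction and the definition of $S$ for the backward direction. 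Applying the Truth Lemma at $x_0$ yields $x_0 \nVdash \varphi$ in a Visser model with $L \subseteq \Log(W,R,S)$, hence $\varnothing \not\models_L^V \varphi$.

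The main obstacle is the definition of $S$ and, correspondingly, the $\rhd$ case of the Truth Lemma: $S$ must be permissive enough that, when $A \rhd B \in x$, the required witness $z$ with $xRz$, $ySz$ and $B \in z$ is produced by the Existence Lemma, yet restrictive enough that, when $A \rhd B \notin x$, there is a critical successor $y$ with $A \in y$ none of whose $S$-successors $R$-above $x$ contains $B$. Threading $S$ through this sweet spot while keeping it reflexive and transitive --- and, for $\CLM$, compatible with $xSyRz \Rightarrow xRz$, which is exactly where the axiom $\M$ is consumed --- is the combinatorial heart of the argument. An alternative route, which is in fact how the result is classically obtained, is to prove completeness with respect to Veltman semantics first and then transfer it to Visser semantics by constructing a bisimulation between the corresponding Veltman and Visser frames; this sidesteps the direct construction of a single global $S$ at the cost of carrying the two semantics in parallel.
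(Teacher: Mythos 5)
First, a point of comparison: the paper does not prove this statement at all --- it is quoted as a Fact with a citation to Ignatiev, and the classical proof runs through Veltman semantics (a canonical-model construction with relations $S_w$ indexed by worlds) followed by a transfer to Visser frames via bisimulation, i.e.\ exactly the route you mention only in your closing sentence. Your soundness half is fine and matches Fact \ref{VisSem}: validity of $\J{1}$--$\J{4}$ on every Visser frame (reflexivity of $S$ for $\J{1}$, transitivity for $\J{2}$), validity of the $\GL$ axioms from transitivity and converse well-foundedness of $R$ via Fact \ref{CWF}, and validity of $\M$ under the frame condition of Fact \ref{VisSem}.2.

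The gap is in the completeness half. You take $W$ to be the maximal $L$-consistent subsets of a finite adequate set $\Phi$ and then ask for a \emph{single global} reflexive transitive $S$ on $W$ making the $\rhd$-case of the Truth Lemma go through; you flag the definition of $S$ as ``the combinatorial heart'' but never give it, and on this carrier no such $S$ exists in general. The Visser clause for $\rhd$ quantifies over $R$-successors of the evaluation point, so whether a world $y$ must $S$-reach a $B$-world depends on which $R$-predecessor is being served: if $A \rhd B \in x$ but $A \rhd B \notin x'$, and $y$ is an $R$-successor of both containing $A$, then the witness $z$ with $y S z$ and $B \in z$ demanded on behalf of $x$ may itself lie $R$-above $x'$ (membership in $R(x')$ is governed by boxed formulas alone), falsifying the critical-successor requirement for $x'$. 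Veltman semantics avoids this precisely by indexing $S_w$ by $w$; to obtain one global $S$ one must first break such coincidences by duplicating worlds (tagging copies with the predecessor or $\rhd$-demand they serve), which is exactly what the Veltman-to-Visser bisimulation construction accomplishes. So the ``alternative route'' you relegate to a final remark is not an optional shortcut but the missing essential idea, and as written your Existence/Truth Lemma scheme founders at the $\rhd$ case. (A secondary unaddressed point: for $\CLM$ the duplication must moreover preserve the condition $x S y R z \Rightarrow x R z$, which requires a separate verification.)
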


\begin{fact}[Visser completeness of $\IL$, $\ILM$, $\ILP$ and $\ILW$ (de Jongh and Veltman \cite{deJVel90,deJVel99} and Visser \cite{Vis88})]\label{ComplIL}
Let $L \in \{\IL, \ILM, \ILP, \ILW\}$. 
For any $\mathcal{L}(\Box, \rhd)$-formula $\varphi$, $\varnothing \vdash_L \varphi$ if and only if $\varnothing \models_L^V \varphi$. 
\qed
\end{fact}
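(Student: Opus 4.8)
The plan is to reduce the theorem to the completeness of these logics with respect to Veltman semantics and then transfer that completeness to Visser semantics through truth-preserving transformations of models. The soundness direction $\varnothing \vdash_L \varphi \Rightarrow \varnothing \models_L^V \varphi$ is immediate: it was already observed that $\Gamma \vdash_L \varphi$ implies $\Gamma \models_L^V \varphi$, and Fact \ref{VisSem} guarantees that the Visser frames satisfying the respective relational conditions validate each of $\IL$, $\ILM$, $\ILP$ and $\ILW$. Hence the real work lies in the completeness direction, which I would establish by contraposition: assuming $\varnothing \nvdash_L \varphi$, I would produce a Visser model $\langle W, R, S, \Vdash \rangle$ with $L \subseteq \Log(W, R, S)$ and a point at which $\varphi$ fails.

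First I would invoke the completeness of de Jongh and Veltman \cite{deJVel90, deJVel99} with respect to Veltman semantics, obtaining for each $L$ a finite Veltman model $\langle W, R, \{S_w\}_{w \in W}, \Vdash \rangle$ validating $L$ in which $\varphi$ is refuted at some world. Here each $S_w$ is a relation on the local cone $R(w)$, and the relational conditions corresponding to $\M$, $\axP$ and $\axW$ are expressed fiberwise over the family $\{S_w\}_{w \in W}$. The main step is to convert this Veltman model into a Visser model, in which the whole family $\{S_w\}_{w \in W}$ is replaced by a single transitive and reflexive relation $S$ on all of $W$ subject to $R \subseteq S$ together with the extra global conditions of Fact \ref{VisSem}. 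Following Visser \cite{Vis88}, I would not try to fuse the $S_w$ directly, since in general there is no single global $S$ whose behaviour on each cone $R(w)$ reproduces the prescribed $S_w$; instead I would pass to a refined model whose worlds record enough of the path along which a world was reached, define a global $S$ on these refined worlds, and equip the refinement with a back-and-forth relation (a bisimulation) to the original Veltman model. The bisimulation is to be arranged so as to preserve forcing of all $\mathcal{L}(\Box, \rhd)$-formulas, so that $\varphi$ still fails, while the refinement is designed so that $R \subseteq S$ holds and so that transitivity of $S$, the $\CLM$-condition, the $\axP$-condition, or conversely well-foundedness of $R \circ S$ hold globally exactly when their fiberwise Veltman analogues held; by Fact \ref{VisSem} this gives $L \subseteq \Log(W, R, S)$ and completes the contrapositive.

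The hard part will be precisely the construction of the global relation $S$ on the refined frame and the verification that it simultaneously (i) supports a truth-preserving bisimulation for the binary modality $\rhd$, which demands both the existence of $S$-successors witnessing each forced $\rhd$-formula and the absence of spurious witnesses falsifying one that should hold, and (ii) inherits the logic-specific closure properties for $\CLM$, $\axP$ and $\axW$ from the purely local data $\{S_w\}_{w \in W}$ of the Veltman model. The delicate point is that globalizing the fiberwise relations can introduce new $S$-pairs that must nonetheless respect the $R \subseteq S$ requirement and, in the case of $\ILW$, keep $R \circ S$ conversely well-founded; controlling these interactions while maintaining the bisimulation is the technical core of the argument.
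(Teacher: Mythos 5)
Your proposal is correct and follows essentially the same route as the paper, which states this result as a known fact citing de Jongh--Veltman's completeness of $\IL$, $\ILM$, $\ILP$ and $\ILW$ with respect to Veltman semantics and Visser's transfer to Visser frames via truth-preserving bisimulations with unravelled models carrying a single global relation $S$. Your soundness direction via Fact \ref{VisSem} and your identification of the bisimulation construction (including the handling of $R \subseteq S$ and the logic-specific frame conditions, with converse well-foundedness of $R \circ S$ for $\ILW$) as the technical core match the cited argument.
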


However, every logic $L \in \{\CL, \CLM, \IL, \ILM, \ILP, \ILW\}$ lacks strong completeness with respect to Visser semantics as in the case of $\GL$. 
That is, $\Delta \models_L^V \bot$ but $\Delta \nvdash_L \bot$ where $\Delta$ is the set of formulas defined in Fact \ref{SKInCompl}.

\section{Topological semantics of normal extensions of $\CL$}\label{TopSem}

In this section, we newly introduce a topological semantics of normal extensions of $\CL$. 
Our topological semantics is based on bitopological spaces. 

\begin{defn}[Bitopological spaces]
Let $X$ be a non-empty set and $\tau^0, \tau^1$ be families of subsets of $X$. 
A triple $\langle X, \tau^0, \tau^1 \rangle$ is called a \textit{bitopological space} if both $\tau^0$ and $\tau^1$ are topologies on $X$.
\end{defn} 

The following definition is an essential part of our work. 

\begin{defn}\label{ope}
Let $\langle X, \tau^0, \tau^1 \rangle$ be a bitopological space. 
For subsets $Y$ and $Z$ of $X$, we define a subset $e_{\tau^0, \tau^1} (Y, Z)$ of $X$ as follows:
\[
	e_{\tau^0, \tau^1} (Y, Z) := \{ x \in X \mid \forall U \in \tau^1[ x \in d_{\tau^0} (Y \cap U) \Rightarrow x \in d_{\tau^0} (Z \cap U)  ]\}.
\]
\end{defn}

If there is no room for confusion, we simply write $e(Y, Z)$ instead of \linebreak $e_{\tau^0, \tau^1}(Y, Z)$. 
Using our sets $e_{\tau^0, \tau^1}(Y, Z)$, we define valuations on bitopological spaces. 

\begin{defn}
Let $\langle X, \tau^0, \tau^1 \rangle$ be a bitopological space.
A \textit{valuation} on \linebreak $\langle X, \tau^0, \tau^1 \rangle$ is a mapping $v : \mathcal{L}(\Box, \rhd) \rightarrow \mathcal{P}(X)$ defined as in Definition \ref{ValTop} with the following clauses:
\begin{itemize}
	\item $v(\Box \varphi) = cd_{\tau^0}(v(\varphi))$;   
	\item $v(\Diamond \varphi) = d_{\tau^0}(v(\varphi))$;   
	\item $v(\varphi \rhd \psi) = e_{\tau^0, \tau^1}(v(\varphi), v(\psi))$.  
\end{itemize}
The validity of an $\mathcal{L}(\Box, \rhd)$-formula in a bitopological space and the logic \linebreak $\Log(X, \tau^0, \tau^1)$ of $\langle X, \tau^0, \tau^1 \rangle$ are also defined as in Definition \ref{ValTop}. 
\end{defn}

For a normal extension $L$ of $\CL$, we say that a bitopological space $\langle X, \tau^0, \tau^1 \rangle$ is an \textit{$L$-space} if $L \subseteq \Log(X, \tau^0, \tau^1)$. 
We prove that every $\tau^0$-scattered bitopological space is a $\CL$-space.

\begin{prop}\label{AXCL}
All axioms $\J{1}$, $\J{2}$, $\J{3}$ and $\J{4}$ in Definition \ref{CL} are valid in any bitopological space $\langle X, \tau^0, \tau^1 \rangle$.
\end{prop}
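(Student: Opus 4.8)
The plan is to reduce the validity of each of the four implications to a purely set-theoretic inclusion. Since $v(\alpha \to \beta) = X$ if and only if $v(\alpha) \subseteq v(\beta)$, I would fix an arbitrary valuation $v$ on $\langle X, \tau^0, \tau^1 \rangle$ and abbreviate $Y = v(p)$, $Z = v(q)$, $W = v(r)$. Unfolding the clauses for $\Box$, $\Diamond$ and $\rhd$, the four axioms amount exactly to the inclusions $cd_{\tau^0}(\overline{Y} \cup Z) \subseteq e(Y, Z)$ for $\J{1}$; $e(Y, Z) \cap e(Z, W) \subseteq e(Y, W)$ for $\J{2}$; $e(Y, W) \cap e(Z, W) \subseteq e(Y \cup Z, W)$ for $\J{3}$; and $e(Y, Z) \cap d_{\tau^0}(Y) \subseteq d_{\tau^0}(Z)$ for $\J{4}$. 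As $Y, Z, W$ range over arbitrary subsets of $X$, it suffices to establish these four inclusions directly from the definitions of $d_{\tau^0}$, $cd_{\tau^0}$ and $e = e_{\tau^0, \tau^1}$.

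Three of these are immediate from the shape of the defining implication in $e(Y,Z)$. For $\J{4}$, I would instantiate the universally quantified $U \in \tau^1$ in the definition of $e(Y,Z)$ by $U = X$; then membership in $e(Y,Z)$ together with $x \in d_{\tau^0}(Y)$ yields $x \in d_{\tau^0}(Z)$ at once. For $\J{2}$, given $x \in e(Y,Z) \cap e(Z,W)$ and a $U \in \tau^1$ with $x \in d_{\tau^0}(Y \cap U)$, I would apply the defining implication of $e(Y,Z)$ to pass to $x \in d_{\tau^0}(Z \cap U)$ and then that of $e(Z,W)$, with the \emph{same} $U$, to reach $x \in d_{\tau^0}(W \cap U)$. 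For $\J{3}$, I would use $(Y \cup Z) \cap U = (Y \cap U) \cup (Z \cap U)$ together with the additivity $d_{\tau^0}(A \cup B) = d_{\tau^0}(A) \cup d_{\tau^0}(B)$ from Fact \ref{dset}(3): this splits $x \in d_{\tau^0}((Y \cup Z) \cap U)$ into the case $x \in d_{\tau^0}(Y \cap U)$ or the case $x \in d_{\tau^0}(Z \cap U)$, each of which gives $x \in d_{\tau^0}(W \cap U)$ by the hypothesis.

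The one demanding genuine work, and where I expect the main obstacle, is $\J{1}$. First I would unfold the co-derived set: since $\overline{\overline{Y} \cup Z} = Y \cap \overline{Z}$, membership $x \in cd_{\tau^0}(\overline{Y} \cup Z)$ means precisely $x \notin d_{\tau^0}(Y \cap \overline{Z})$, so there is a $\tau^0$-neighborhood $V$ of $x$ with $V \cap Y \cap \overline{Z} \subseteq \{x\}$ — informally, within $V$ every point of $Y$ other than $x$ already lies in $Z$. To verify $x \in e(Y, Z)$ I would fix an arbitrary $U \in \tau^1$ with $x \in d_{\tau^0}(Y \cap U)$ and prove $x \in d_{\tau^0}(Z \cap U)$ as follows: given any $\tau^0$-neighborhood $V''$ of $x$, shrink it to $V'' \cap V \in \tau^0$ and use $x \in d_{\tau^0}(Y \cap U)$ to produce a point $y \neq x$ in $V'' \cap V \cap Y \cap U$; then $y \in V \cap Y$ with $y \neq x$ forces $y \in Z$ by the choice of $V$, so $y$ witnesses $x \in d_{\tau^0}(Z \cap U)$. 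The delicate point is the interaction between the fixed $\tau^1$-set $U$ and the varying $\tau^0$-neighborhoods, and in particular the need to intersect each candidate $V''$ with the single global witness $V$ so that the same $V$ can be reused against every $V''$. Once all four inclusions are established for arbitrary $Y, Z, W$, the validity of $\J{1}$--$\J{4}$ in every bitopological space follows.
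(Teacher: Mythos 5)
Your proposal is correct and takes essentially the same route as the paper's own proof: the same reduction to the four set-theoretic inclusions, the same instantiation $U = X$ for $\J{4}$, the same use of the additivity $d_{\tau^0}(A \cup B) = d_{\tau^0}(A) \cup d_{\tau^0}(B)$ for $\J{3}$, and for $\J{1}$ the identical neighborhood-intersection argument, with your $V$ and $V''$ playing the roles of the paper's $W$ and $V$. Nothing further is needed.
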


\begin{proof}
($\J{1}$): 
It suffices to show that for any $Y, Z \subseteq X$, ${cd}_{\tau^0}(\overline{Y} \cup Z) \subseteq e (Y, Z)$. 
Suppose $x \in cd_{\tau^0}(\overline{Y} \cup Z)$, that is, $x \not\in d_{\tau^0} (Y \cap \overline{Z})$.
Then there exists a $\tau^0$-neighborhood $W$ of $x$ such that $Y \cap \overline{Z} \cap W \subseteq \{ x \}$. 

Take $U \in \tau^1$ arbitrarily, and suppose $x \in d_{\tau^0}(Y \cap U)$. 
We would like to show $x \in d_{\tau^0}(Z \cap U)$. 
Let $V$ be any $\tau^0$-neighborhood of $x$. 
Then $V \cap W$ is also a $\tau^0$-neighborhood of $x$. 
Since $x \in d_{\tau^0}(Y \cap U)$, there exists $y \neq x$ such that $y \in Y \cap U \cap V \cap W$, and hence $y \in Y \cap W$. 
On the other hand, since $Y \cap \overline{Z} \cap W \subseteq \{ x \}$, we have $y \not\in Y \cap \overline{Z} \cap W$. Therefore $y \in Z$, and hence $y \in Z \cap U \cap V$. 
This implies $x \in d_{\tau^0}(Z \cap U)$. 
We have shown $x \in e(Y, Z)$. 

($\J{2}$): 
We show $e(Y, Z) \cap e(Z,W) \subseteq e(Y,W)$. 
Suppose $x \in e(Y, Z) \cap e(Z,W)$. 
Take $U \in \tau^1$ arbitrarily. 
If $x \in d_{\tau^0}(Y \cap U)$, then $x \in d_{\tau^0}(Z \cap U)$ by $x \in e(Y, Z)$. Moreover, $x \in d_{\tau^0}(W \cap U)$ by $x \in e(Z,W)$. 
Thus $x \in e(Y,W)$.

($\J{3}$): 
We show $e(Y, W) \cap e(Z, W) \subseteq e(Y \cup Z, W)$. 
Suppose $x \in e(Y,W) \cap e(Z,W)$. 
Take $U \in \tau^1$ arbitrarily, and assume $x \in d_{\tau^0}((Y \cup Z) \cap U)$. 
By Fact \ref{dset}, we have
\[
	d_{\tau^0}((Y \cup Z) \cap U) = d_{\tau^0}((Y \cap U) \cup (Z \cap U)) = d_{\tau^0} (Y \cap U) \cup d_{\tau^0}(Z \cap U).
\]
Then $x \in d_{\tau^0}(Y \cap U)$ or $x \in d_{\tau^0}(Z \cap U)$. 
In either case, we obtain $x \in d_{\tau^0}(W \cap U)$ by $x \in e(Y, W) \cap e(Z, W)$. 
Thus $x \in e(Y \cup Z, W)$.

($\J{4}$): 
We show $e(Y, Z) \cap d_{\tau^0}(Y) \subseteq d_{\tau^0}(Z)$. 
Suppose $x \in e(Y, Z) \cap d_{\tau^0}(Y)$. 
Then $x \in d_{\tau^0}(Y \cap X)$. 
Since $X \in \tau^1$, it follows from $x \in e(Y, Z)$ that $x \in d_{\tau^0}(Z \cap X)$.
Equivalently, $x \in d_{\tau^0}(Z)$.
\end{proof}

Since each inference rule of $\CL$ preserves validity in bitopological spaces, we obtain the following corollary from Fact \ref{SimEsa} and Proposition \ref{AXCL}.

\begin{cor}\label{Sound_CL}
For any bitopological space $\langle X, \tau^0, \tau^1 \rangle$, it is a $\CL$-space if and only if $\langle X, \tau^0 \rangle$ is scattered. 
\qed
\end{cor}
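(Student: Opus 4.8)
The plan is to exploit the decomposition $\CL = \GL + \J{1} + \J{2} + \J{3} + \J{4}$ together with the two ingredients already in hand. Proposition \ref{AXCL} shows that $\J{1}$--$\J{4}$ are valid in \emph{every} bitopological space, with no condition on $\tau^0$ or $\tau^1$, so the only part of $\CL$ whose validity can depend on the space is the underlying $\GL$, and Fact \ref{SimEsa} governs precisely that part through $\tau^0$. The crucial structural remark is that in any valuation on $\langle X, \tau^0, \tau^1 \rangle$ the clauses for $\Box$ and $\Diamond$ refer only to $\tau^0$; hence, when restricted to the $\rhd$-free fragment, such a valuation is literally a valuation on the monotopological space $\langle X, \tau^0 \rangle$, and conversely every valuation on $\langle X, \tau^0 \rangle$ extends to one on $\langle X, \tau^0, \tau^1 \rangle$ agreeing with it on $\mathcal{L}(\Box)$-formulas.

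For the direction that scatteredness of $\langle X, \tau^0 \rangle$ implies that $\langle X, \tau^0, \tau^1 \rangle$ is a $\CL$-space, I would check that $\Log(X, \tau^0, \tau^1)$ contains every generator of $\CL$ and is closed under its rules. Propositional tautologies and the normality axiom $\Box(p \to q) \to (\Box p \to \Box q)$ are valid in any bitopological space because the Boolean clauses match and $\Log(X, \tau^0)$ is already a normal modal logic with respect to $\tau^0$; the Löb axiom $\Box(\Box p \to p) \to \Box p$ is valid exactly because $\langle X, \tau^0 \rangle$ is scattered, by Fact \ref{SimEsa}; and $\J{1}$--$\J{4}$ are valid by Proposition \ref{AXCL}. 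Since validity under \emph{all} valuations is automatically inherited by substitution instances---substituting arbitrary $\mathcal{L}(\Box, \rhd)$-formulas for the variables only amounts to reassigning the subsets $v(p)$---and since Modus Ponens and Necessitation preserve validity in bitopological spaces, it follows that every theorem of $\CL$ lies in $\Log(X, \tau^0, \tau^1)$.

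For the converse, suppose $\langle X, \tau^0, \tau^1 \rangle$ is a $\CL$-space. Viewing $\mathcal{L}(\Box)$ inside $\mathcal{L}(\Box, \rhd)$, we have $\GL \subseteq \CL \subseteq \Log(X, \tau^0, \tau^1)$, so every $\mathcal{L}(\Box)$-theorem of $\GL$ is valid in the bitopological space. Using the structural remark, I would transfer this to $\langle X, \tau^0 \rangle$: given any valuation on $\langle X, \tau^0 \rangle$, extend it to a valuation on $\langle X, \tau^0, \tau^1 \rangle$, which agrees with it on $\rhd$-free formulas, whence validity of $\mathcal{L}(\Box)$-formulas in the bitopological space forces their validity in $\langle X, \tau^0 \rangle$. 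Thus $\GL \subseteq \Log(X, \tau^0)$, and Fact \ref{SimEsa} yields that $\langle X, \tau^0 \rangle$ is scattered. The only delicate point---bookkeeping rather than a genuine obstacle---is ensuring that Fact \ref{SimEsa}, stated for monotopological spaces and the language $\mathcal{L}(\Box)$, still applies once $\rhd$ has entered via substitution; this is precisely resolved by the observation that the Löb and normality schemata are validated as soon as they hold for all subset-assignments to their variables, which is exactly scatteredness of $\tau^0$, independently of how any $\rhd$-subformulas are evaluated.
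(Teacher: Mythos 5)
Your proposal is correct and follows essentially the same route as the paper: the paper derives the corollary in one line from Proposition \ref{AXCL} (validity of $\J{1}$--$\J{4}$ in arbitrary bitopological spaces), Fact \ref{SimEsa} (scatteredness of $\langle X, \tau^0 \rangle$ characterizing the $\GL$ part, since the $\Box$ and $\Diamond$ clauses depend only on $\tau^0$), and preservation of validity under the inference rules. What you add is merely the explicit bookkeeping---the transfer of valuations between $\langle X, \tau^0 \rangle$ and $\langle X, \tau^0, \tau^1 \rangle$ on the $\rhd$-free fragment and the closure under substitution---which the paper leaves implicit.
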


As well as Kripke frames, Visser fames $\langle W, R, S \rangle$ can be considered as bitopological spaces by considering topologies $\tau_R$ and $\tau_S$ (see Definition \ref{upset}). 
In truth, our new operation $e_{\tau^0, \tau^1}$ is defined with the intention of satisfying the following proposition.

\begin{prop}\label{bitopval}
Let $\langle W, R, S, \Vdash \rangle$ be a Visser model. 
Let $v$ be a valuation on $\langle W, \tau_R, \tau_S \rangle$ satisfying $v(p) = \{x \in W \mid x \Vdash p\}$ for any propositional variable $p$, then $v(\varphi) = \{x \in W \mid x \Vdash \varphi\}$ for any $\mathcal{L}(\Box, \rhd)$-formula $\varphi$.
\end{prop}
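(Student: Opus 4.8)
The plan is to prove the statement by induction on the construction of $\varphi$, establishing the standard truth lemma for this bitopological interpretation of Visser models. The base case for propositional variables holds by hypothesis, and the cases for $\top$, $\bot$ and the Boolean connectives $\neg, \land, \lor, \to$ are immediate, since the corresponding clauses in Definition~\ref{ValTop} exactly mirror the satisfaction clauses of Definition~\ref{KripkeFrames}. I would dispatch these first and quickly.

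For the modal cases $\Box$ and $\Diamond$, the key observation is that a Visser frame has $\langle W, R \rangle$ transitive and conversely well-founded, and converse well-foundedness forces $R$ to be irreflexive (an $R$-reflexive point would generate an infinite $R$-increasing sequence). Hence Fact~\ref{KSTS} applies to $\tau_R$, giving $d_{\tau_R}(Y) = \{x \mid R(x) \cap Y \neq \varnothing\}$ for every $Y \subseteq W$. Using the induction hypothesis $v(\varphi) = \{y \mid y \Vdash \varphi\}$, the equation $v(\Diamond\varphi) = d_{\tau_R}(v(\varphi))$ unfolds directly to $\{x \mid \exists y (xRy\ \&\ y \Vdash \varphi)\}$, matching the Kripke clause for $\Diamond$; the $\Box$ case follows dually through the co-derived set, noting $x \in cd_{\tau_R}(v(\varphi)) \iff R(x) \subseteq v(\varphi)$.

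The crucial and only genuinely novel case is $\varphi \rhd \psi$, where I must show $e_{\tau_R, \tau_S}(Y, Z) = \{x \mid x \Vdash \varphi \rhd \psi\}$ with $Y = v(\varphi)$ and $Z = v(\psi)$. Writing out $e_{\tau_R, \tau_S}$ and again invoking Fact~\ref{KSTS} to translate each $d_{\tau_R}$-membership into an $R$-successor statement, the condition $x \in e_{\tau_R, \tau_S}(Y, Z)$ becomes: for every $S$-upward closed $U$, if some $R$-successor of $x$ lies in $Y \cap U$, then some $R$-successor of $x$ lies in $Z \cap U$. For the direction from the Visser clause to $e$-membership, given such a $U$ and a witness $y \in R(x) \cap Y \cap U$, the clause supplies $z$ with $xRz$, $ySz$ and $z \in Z$; since $U$ is $S$-upward closed and $y \in U$, we get $z \in U$, so $z$ witnesses $x \in d_{\tau_R}(Z \cap U)$. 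For the converse direction, given $y \in R(x) \cap Y$, I would take $U := S(y) = \{w \mid ySw\}$, which lies in $\tau_S$ because $S$ is transitive (so $S(y)$ is $S$-upward closed) and contains $y$ because $S$ is reflexive; feeding this $U$ into $e$-membership yields $z$ with $xRz$, $z \in Z$ and $z \in S(y)$, i.e.\ $ySz$, which is exactly the existential witness required by the Visser clause.

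I expect the main obstacle to be precisely this last case, and within it the recognition that the sets $S(y)$ are the correct test neighborhoods. This is exactly where the reflexivity and transitivity of $S$ (hence the membership $S(y) \in \tau_S$) are used, and it is the point at which the definition of $e_{\tau^0,\tau^1}$ is seen to be engineered to reproduce the Visser satisfaction condition for $\rhd$.
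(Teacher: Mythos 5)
Your proposal is correct and follows essentially the same route as the paper's proof: an induction whose only substantive case is $\rhd$, handled via exactly the paper's equivalence $(\ast)$ between the Visser clause and the $e_{\tau_R,\tau_S}$-condition, with $S$-upward closedness of $U$ in one direction and the test neighborhoods $U := S(y)$ (in $\tau_S$ by transitivity, containing $y$ by reflexivity) in the other. Your explicit remark that converse well-foundedness yields irreflexivity, so that Fact~\ref{KSTS}.2 applies, is a small point the paper leaves implicit but is entirely correct.
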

\begin{proof}
We prove by induction on the construction of $\varphi$. 
We provide proofs of only two cases that $\varphi$ is $\Diamond \psi$ and $\varphi$ is $\psi \rhd \chi$. 

Case of $\varphi \equiv \Diamond \psi$: 
\begin{align*}
	x \Vdash \Diamond \psi & \iff \exists y \in W(x R y \ \&\ y \Vdash \psi), \\
	& \iff R(x) \cap v(\psi) \neq \varnothing, \tag{by induction hypothesis}\\
	& \iff x \in d_{\tau_R}(v(\psi)), \tag{by Fact \ref{KSTS}.2}\\
	& \iff x \in v(\Diamond \psi). 
\end{align*}

Case of $\varphi \equiv \psi \rhd \chi$: 
\begin{align*}
	x \Vdash \psi \rhd \chi & \iff \forall y [ x R y \ \&\ y \Vdash \psi \Rightarrow \exists z(x R z\ \&\ y S z\ \&\ z \Vdash \chi) ], \\
	& \iff \forall y [ y \in R(x) \cap v(\psi) \Rightarrow R(x) \cap S(y) \cap v(\chi) \neq \varnothing ], \tag{by induction hypothesis}\\
	& \overset{(\ast)}{\iff} \forall U \in \tau_S [ R(x) \cap v(\psi) \cap U \neq \varnothing \Rightarrow R(x) \cap U \cap v(\chi) \neq \varnothing ], \\
	& \iff \forall U \in \tau_S [ x \in d_{\tau_R}(v(\psi) \cap U) \Rightarrow x \in d_{\tau_R}(U \cap v(\chi)) ], \tag{by Fact \ref{KSTS}.2}\\
	& \iff x \in e_{\tau_R, \tau_S}(v(\psi), v(\chi)), \\
	& \iff x \in v(\psi \rhd \chi). 
\end{align*}

Here we give a proof of the equivalence marked by $(\ast)$. 

$(\Rightarrow)$: 
Let $U$ be any element of $\tau_S$ with $R(x) \cap v(\psi) \cap U \neq \varnothing$. 
Let $y \in R(x) \cap v(\psi) \cap U$. 
Then, $R(x) \cap S(y) \cap v(\chi)$ is non-empty. 
Since $U$ is $S$-upward closed, $S(y) \subseteq U$. 
Thus $R(x) \cap U \cap v(\chi)$ is also non-empty. 

$(\Leftarrow)$: 
Let $y$ be any element of $R(x) \cap v(\psi)$. 
Since $S$ is reflexive, $y \in S(y)$, and hence $y \in R(x) \cap v(\psi) \cap S(y)$. 
It follows from the transitivity of $S$ that $S(y)$ is $S$-upward closed. 
Hence $S(y) \in \tau_S$. 
Then, we obtain that $R(x) \cap S(y) \cap v(\chi)$ is non-empty. 
\end{proof}

From Proposition \ref{bitopval}, we obtain the following corollary. 

\begin{cor}\label{VSTS}
For any Visser frame $\langle W, R, S \rangle$, $\Log(W, R, S) = \Log(W, \tau_R, \tau_S)$. 

\qed
\end{cor}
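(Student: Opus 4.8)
The plan is to derive both inclusions directly from Proposition \ref{bitopval}, exploiting the fact that a valuation on the bitopological space $\langle W, \tau_R, \tau_S \rangle$ and a forcing relation of a Visser model on $\langle W, R, S \rangle$ are each freely and bijectively determined by their behaviour on propositional variables. Indeed, given any assignment of a subset $f(p) \subseteq W$ to each propositional variable $p$, there is exactly one valuation $v$ on $\langle W, \tau_R, \tau_S \rangle$ with $v(p) = f(p)$ (obtained by the recursive clauses involving $cd_{\tau_R}$, $d_{\tau_R}$ and $e_{\tau_R, \tau_S}$), and exactly one relation $\Vdash$ making $\langle W, R, S, \Vdash \rangle$ a Visser model with $x \Vdash p \iff x \in f(p)$ (obtained by the clauses of Definition \ref{VisFr}). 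Proposition \ref{bitopval} asserts precisely that these two extensions of the same propositional data satisfy $v(\varphi) = \{x \in W \mid x \Vdash \varphi\}$ for every $\mathcal{L}(\Box, \rhd)$-formula $\varphi$.

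For the inclusion $\Log(W, R, S) \subseteq \Log(W, \tau_R, \tau_S)$, I would take $\varphi \in \Log(W, R, S)$ and an arbitrary valuation $v$ on $\langle W, \tau_R, \tau_S \rangle$. Passing to the forcing relation $\Vdash$ determined by $x \Vdash p \iff x \in v(p)$ yields a Visser model, and Proposition \ref{bitopval} gives $v(\varphi) = \{x \in W \mid x \Vdash \varphi\}$. Since $\varphi$ is valid in the frame $\langle W, R, S \rangle$, the right-hand side equals $W$, so $v(\varphi) = W$; as $v$ was arbitrary, $\varphi \in \Log(W, \tau_R, \tau_S)$.

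For the reverse inclusion I would argue symmetrically: given $\varphi \in \Log(W, \tau_R, \tau_S)$, an arbitrary Visser model $\langle W, R, S, \Vdash \rangle$ and a point $x \in W$, I define the valuation $v$ with $v(p) = \{y \in W \mid y \Vdash p\}$, apply Proposition \ref{bitopval} to obtain $\{y \in W \mid y \Vdash \varphi\} = v(\varphi) = W$, and conclude $x \Vdash \varphi$. Since the model and the point were arbitrary, $\varphi \in \Log(W, R, S)$.

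There is essentially no genuine obstacle here: the entire content resides in Proposition \ref{bitopval}, and the only thing to verify is the bijective correspondence between valuations and forcing relations, which is routine from the respective recursive definitions. The one point worth recording is that $\langle W, R, S \rangle$ being a Visser frame guarantees that $R$ is transitive and irreflexive (a conversely well-founded relation admits no $x$ with $x R x$, lest $x, x, x, \ldots$ be an infinite $R$-increasing sequence), so that the appeals to Fact \ref{KSTS} made inside the proof of Proposition \ref{bitopval} are legitimate.
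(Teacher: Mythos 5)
Your proposal is correct and matches the paper exactly: the paper derives Corollary \ref{VSTS} directly from Proposition \ref{bitopval}, leaving implicit precisely the routine bijection between valuations on $\langle W, \tau_R, \tau_S \rangle$ and forcing relations on $\langle W, R, S \rangle$ that you spell out. Your added observation that converse well-foundedness yields irreflexivity of $R$ (legitimizing the appeals to Fact \ref{KSTS}) is a sound and worthwhile check, also left tacit in the paper.
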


Since every transitive and conversely well-founded Kripke frame can be extended to a Visser frame, Corollary \ref{VSTS} is an extension of Fact \ref{KSTS}.3. 
Conversely, we show that $\tau^0$-scattered Alexandroff bitopological spaces can be considered as Visser frames.

\begin{thm}\label{Alex2}
Let $\langle X, \tau^0, \tau^1 \rangle$ be any bitopological space. 
Then, the following are equivalent:
\begin{enumerate}
	\item $\tau^0$ is scattered and both $\tau^0$ and $\tau^1$ are Alexandroff. 
	\item There exists a Visser frame $\langle X, R, S \rangle$ such that $\tau^0 = \tau_R$ and $\tau^1 = \tau_S$.
\end{enumerate} 
\end{thm}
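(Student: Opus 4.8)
The plan is to prove the two implications separately, with essentially all of the work concentrated in $(1) \Rightarrow (2)$.

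For the easy direction $(2) \Rightarrow (1)$, suppose there is a Visser frame $\langle X, R, S \rangle$ with $\tau^0 = \tau_R$ and $\tau^1 = \tau_S$. Both $\tau_R$ and $\tau_S$ are Alexandroff by Fact \ref{KSTS}.1, so $\tau^0$ and $\tau^1$ are Alexandroff. Moreover, $R$ is irreflexive, since an element $x$ with $xRx$ would yield the infinite $R$-increasing sequence $x, x, x, \ldots$, contradicting converse well-foundedness; and $R$ is transitive by the definition of a Visser frame. Hence Fact \ref{KSTS}.3 gives $\Log(X, R) = \Log(X, \tau_R)$, and Fact \ref{CWF} gives $\GL \subseteq \Log(X, R)$, so $\GL \subseteq \Log(X, \tau^0)$. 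By Fact \ref{SimEsa}, $\langle X, \tau^0 \rangle$ is scattered, which establishes $(1)$.

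For $(1) \Rightarrow (2)$, assume $\tau^0$ is scattered and both $\tau^0$ and $\tau^1$ are Alexandroff. The Alexandroff property lets me assign to each $x \in X$ its smallest $\tau^i$-open neighborhood $U^i_x := \bigcap \{ U \in \tau^i \mid x \in U \}$ for $i \in \{0,1\}$; I will repeatedly use that $x \in U^i_x$ and that $y \in U^i_x \iff U^i_y \subseteq U^i_x$. I then define two relations on $X$ by
\[
	x R y :\iff x \neq y \text{ and } y \in U^0_x, \qquad x S y :\iff y \in U^1_x,
\]
so that $S$ is the $\tau^1$-specialization preorder and $R$ is the irreflexive part of the $\tau^0$-specialization preorder. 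Note the immediate identities $S(x) = U^1_x$ and $R(x) = U^0_x \setminus \{x\}$, whence $U^0_x = \{x\} \cup R(x)$.

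The routine verifications come first. The relation $S$ is reflexive (as $x \in U^1_x$) and transitive (from $y \in U^1_x$ one gets $U^1_y \subseteq U^1_x$), and a standard Alexandroff computation gives $\tau_S = \tau^1$: a set is $\tau^1$-open iff it contains $U^1_x = S(x)$ for each of its points iff it is $S$-upward closed. The identity $U^0_x = \{x\} \cup R(x)$ yields $\tau_R = \tau^0$ by the same argument. What remains, and where scatteredness is essential, is to show that $\langle X, R \rangle$ is a transitive and conversely well-founded Kripke frame; this is the main obstacle, and I would organize the hypothesis into exactly two uses. First, scatteredness forces antisymmetry of the $\tau^0$-specialization preorder: if $x \neq y$ but $U^0_x = U^0_y$, then every $\tau^0$-neighborhood of either point contains the other, so $Y := \{x,y\}$ satisfies $Y \subseteq d_{\tau^0}(Y)$, contradicting scatteredness. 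Antisymmetry is precisely what makes $R$ transitive, since from $xRy$ and $yRz$ one obtains $z \in U^0_x$, and $x = z$ would give $U^0_x = U^0_y$ with $x \neq y$, hence $xRz$. Second, converse well-foundedness is read off directly from the definition of scatteredness: an infinite $R$-increasing sequence $x_0 R x_1 R \cdots$ has pairwise distinct terms (by transitivity and irreflexivity), so $Y := \{x_n \mid n \in \mathbb{N}\}$ is nonempty and a witness $x_m \in Y \setminus d_{\tau^0}(Y)$ would possess a $\tau^0$-neighborhood $U$ with $U \cap Y \subseteq \{x_m\}$; but $x_{m+1} \in U^0_{x_m} \subseteq U$ and $x_{m+1} \in Y \setminus \{x_m\}$, a contradiction. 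With $R$ transitive and conversely well-founded and $S$ transitive and reflexive, $\langle X, R, S \rangle$ is a Visser frame satisfying $\tau_R = \tau^0$ and $\tau_S = \tau^1$, which gives $(2)$.
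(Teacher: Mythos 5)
Your proof is correct, and its skeleton coincides with the paper's: the same relations $R$ and $S$ (your condition $y \in U^0_x$ is exactly the paper's $\forall U \in \tau^0(x \in U \Rightarrow y \in U)$, i.e., $x \in d_{\tau^0}(\{y\})$ once $x \neq y$), the same minimal-neighborhood argument for $\tau_R = \tau^0$ and $\tau_S = \tau^1$, and the same easy direction $(2) \Rightarrow (1)$. Where you genuinely diverge is in how scatteredness is exploited for the two remaining properties of $R$. For transitivity the paper computes with derived sets: from $x \in d_{\tau^0}(\{y\})$ and $y \in d_{\tau^0}(\{z\})$ it gets $d_{\tau^0}(\{y\}) \subseteq d_{\tau^0}(d_{\tau^0}(\{z\})) \subseteq d_{\tau^0}(\{z\})$ via Fact \ref{dset}.5 (the inequality $d_\tau d_\tau \subseteq d_\tau$ in scattered spaces), whereas you first extract antisymmetry of the specialization preorder from scatteredness applied to the two-point set $\{x,y\}$ and then deduce transitivity. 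For converse well-foundedness the paper takes a modal-logical detour: scatteredness gives $\GL \subseteq \Log(X,\tau^0)$ by Fact \ref{SimEsa}, then $\Log(X,R) = \Log(X,\tau_R) = \Log(X,\tau^0)$ by Fact \ref{KSTS}.3 (available since $R$ is by then transitive and irreflexive), and finally Fact \ref{CWF}; you instead apply the definition of scatteredness directly to the orbit $\{x_n \mid n \in \mathbb{N}\}$ of a putative ascending chain, after noting its terms are pairwise distinct. Your route is more elementary and self-contained, never leaving point-set topology, at the cost of a little extra length; the paper's is shorter by outsourcing the work to the cited characterization theorems, which also makes transparent that the theorem is essentially a transfer of Facts \ref{CWF} and \ref{SimEsa} along Fact \ref{KSTS}.3. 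As a small bonus, your explicit check that $R$ is irreflexive in the direction $(2) \Rightarrow (1)$ (required before invoking Fact \ref{KSTS}.3) makes fully explicit a step the paper leaves tacit.
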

\begin{proof}
$(\Rightarrow)$: 
We define binary relations $R$ and $S$ on $X$ as follows: 
\begin{itemize}
	\item $x R y : \iff x \neq y\ \&\ \forall U \in \tau^0 (x \in U \Rightarrow y \in U)$\\
	\hspace{0.3in} $(\iff x \in d_{\tau^0}(\{y\}))$; 
	\item $x S y : \iff \forall U \in \tau^1 (x \in U \Rightarrow y \in U)$. 
\end{itemize}
Clearly, $R$ is irreflexive and $S$ is transitive and reflexive. 
We show that $R$ is transitive. 
Let $x R y$ and $y R z$. 
Then $x \in d_{\tau^0}(\{y\})$ and $y \in d_{\tau^0}(\{z\})$. 
By Fact \ref{dset}.2, $d_{\tau^0}(\{y\}) \subseteq d_{\tau^0}(d_{\tau^0}(\{z\}))$. 
Since $\tau^0$ is scattered, $d_{\tau^0}(d_{\tau^0}(\{z\})) \subseteq d_{\tau^0}(\{z\})$ by Fact \ref{dset}.5. 
Thus $d_{\tau^0}(\{y\}) \subseteq d_{\tau^0}(\{z\})$. 
Then, $x \in d_{\tau^0}(\{z\})$ and hence $x R z$. 

We prove $\tau^0 = \tau_R$, and the proof of $\tau^1 = \tau_S$ is similar. 

$(\subseteq)$: 
Let $U \in \tau^0$. 
If $x \in U$ and $x R y$, then $y \in U$ by the definition of $R$. 
This means that $U$ is $R$-upward closed. 
Thus $U \in \tau_R$. 

$(\supseteq)$: Let $U \in \tau_R$ and $x$ be an arbitrary element of $U$. 
Define $V' : = \bigcap \{V \in \tau^0 \mid x \in V\}$. 
Since $\tau^0$ is Alexandroff, $V'$ is a $\tau^0$-neighborhood of $x$. 
Since $V'$ is a subset of every $\tau^0$-neighborhood of $x$, for any $y \in V'$, either $x = y$ or $x R y$. 
Since $U$ is $R$-upward closed, $U$ contains such $y$. 
Thus $V' \subseteq U$. 
We have shown that an arbitrary element of $U$ has a $\tau^0$-neighborhood inside of $U$. 
Thus $U \in \tau^0$. 

Since $\langle X, \tau^0 \rangle$ is scattered, by Fact \ref{SimEsa}, $\GL \subseteq \Log(X, \tau^0)$. 
By Fact \ref{KSTS}.3, $\Log(X, R) = \Log(X, \tau_R) = \Log(X, \tau^0)$. 
Then $\GL \subseteq \Log(X, R)$, and thus $R$ is conversely well-founded by Fact \ref{CWF}. 
Therefore $\langle W, R, S \rangle$ is a Visser frame. 

$(\Leftarrow)$: 
By Fact \ref{KSTS}.1, both $\tau^0 = \tau_R$ and $\tau^1 = \tau_S$ are Alexandroff. 
Since $R$ is transitive and conversely well-founded, $\GL \subseteq \Log(W, R) = \Log(W, \tau_R)$ by Facts \ref{CWF} and \ref{KSTS}.3. 
Then it follows from Fact \ref{SimEsa} that $\tau^0 = \tau_R$ is scattered. 
\end{proof}

To summarize the previous investigations, Visser semantics is exactly a topological semantics restricted to $\tau^0$-scattered Alexandroff bitopological spaces. 
Some extensions of $\CL$ such as $\IL$ are complete but not strongly complete with respect to this restricted version of topological semantics.

As in the previous section, we introduce the consequence relation $\models_L^T$ with respect to our topological semantics. 

\begin{defn}
Let $L$ be a normal extension of $\CL$, $\Gamma$ be a set of $\mathcal{L}(\Box, \rhd)$-formulas, and $\varphi$ be an $\mathcal{L}(\Box, \rhd)$-formula.
\begin{itemize}
	\item $\Gamma \models_L^T \varphi$ $:\iff$ for any $L$-space $\langle X, \tau^0, \tau^1 \rangle$, any valuation $v$ on $\langle X, \tau^0, \tau^1 \rangle$ and any $x \in X$, if $x \in v (\psi)$ for all $\psi \in \Gamma$, then $x \in v (\varphi)$; 
	\item We say that $L$ is \textit{topologically complete} if for any $\mathcal{L}(\Box, \rhd)$-formula $\varphi$, $\varnothing \models_L^T \varphi$ implies $\varnothing \vdash_L \varphi$; 
	\item We say that $L$ is \textit{topologically strongly complete} if for any $\mathcal{L}(\Box, \rhd)$-formula $\varphi$ and set $\Gamma$ of $\mathcal{L}(\Box, \rhd)$-formulas, $\Gamma \models_L^T \varphi$ implies $\Gamma \vdash_L \varphi$. 
\end{itemize}
\end{defn}

From Facts \ref{ComplCL} and \ref{ComplIL}, and the above discussions, we obtain the following topological completeness of $\CL$ and its some extensions.  

\begin{thm}[Topological completeness of some extensions of $\CL$]\label{wTCCL}
The logics $\CL$, $\CLM$, $\IL$, $\ILM$, $\ILP$ and $\ILW$ are topologically complete. 
\qed
\end{thm}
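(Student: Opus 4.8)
The plan is to prove topological completeness by contraposition, using the Visser completeness theorems (Facts \ref{ComplCL} and \ref{ComplIL}) together with the correspondence between Visser frames and bitopological spaces established above. Fix a logic $L \in \{\CL, \CLM, \IL, \ILM, \ILP, \ILW\}$ and an $\mathcal{L}(\Box, \rhd)$-formula $\varphi$, and suppose $\varnothing \nvdash_L \varphi$. The goal is to produce an $L$-space together with a valuation and a point at which $\varphi$ fails, which witnesses $\varnothing \not\models_L^T \varphi$; the contrapositive then yields the desired implication $\varnothing \models_L^T \varphi \Rightarrow \varnothing \vdash_L \varphi$.

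First I would invoke the relevant Visser completeness theorem: by Fact \ref{ComplCL} (for $L \in \{\CL, \CLM\}$) or Fact \ref{ComplIL} (for $L \in \{\IL, \ILM, \ILP, \ILW\}$), the assumption $\varnothing \nvdash_L \varphi$ gives $\varnothing \not\models_L^V \varphi$. Hence there exist a Visser model $\langle W, R, S, \Vdash \rangle$ with $L \subseteq \Log(W, R, S)$ and a point $x \in W$ such that $x \nVdash \varphi$.

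Next I would transport this counterexample into the bitopological setting. Consider the bitopological space $\langle W, \tau_R, \tau_S \rangle$ built from the relations $R$ and $S$ as in Definition \ref{upset}. By Corollary \ref{VSTS} we have $\Log(W, R, S) = \Log(W, \tau_R, \tau_S)$, so $L \subseteq \Log(W, \tau_R, \tau_S)$; that is, $\langle W, \tau_R, \tau_S \rangle$ is an $L$-space. Define a valuation $v$ on $\langle W, \tau_R, \tau_S \rangle$ by $v(p) = \{y \in W \mid y \Vdash p\}$ for each propositional variable $p$. Proposition \ref{bitopval} then gives $v(\psi) = \{y \in W \mid y \Vdash \psi\}$ for every $\mathcal{L}(\Box, \rhd)$-formula $\psi$; in particular $x \notin v(\varphi)$ because $x \nVdash \varphi$. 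Thus $\langle W, \tau_R, \tau_S \rangle$ is an $L$-space carrying a valuation $v$ and a point $x$ with $x \notin v(\varphi)$, which is exactly $\varnothing \not\models_L^T \varphi$, completing the contraposition.

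I expect no serious obstacle here, since all the substantive work is already packaged in Proposition \ref{bitopval} and Corollary \ref{VSTS}: the argument amounts to the observation that Visser semantics embeds into our topological semantics as the special case of $\tau^0$-scattered Alexandroff spaces (cf. Theorem \ref{Alex2}), so completeness with respect to the smaller class of Visser models immediately upgrades to completeness with respect to the larger class of $L$-spaces. The only point requiring a little care is to confirm that the space $\langle W, \tau_R, \tau_S \rangle$ genuinely qualifies as an $L$-space in the sense demanded by $\models_L^T$, and this is precisely what Corollary \ref{VSTS} supplies.
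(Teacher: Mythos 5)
Your proposal is correct and matches the paper's intended argument: the theorem there is stated with only a pointer to Facts \ref{ComplCL} and \ref{ComplIL} ``and the above discussions,'' which are precisely the transfer results you invoke (Proposition \ref{bitopval} and Corollary \ref{VSTS}). Your contrapositive write-up simply makes explicit the same reduction of topological completeness to Visser completeness via $\langle W, \tau_R, \tau_S \rangle$.
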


The main purpose of the present paper is to strengthen Theorem \ref{wTCCL}, that is, we prove that these logics are topologically strongly complete.
%that is, we prove that these logics are strongly complete with respect to our topological semantics.

\section{Topological compactness and topological strong completeness}\label{MT}

In this section, we prove the topological strong completeness theorem of some extensions of $\CL$. 
This directly follows from the the topological compactness theorem (Theorem \ref{CompCL}) and the topological completeness theorem (Theorem \ref{wTCCL}). 
Thus the main purpose of this section is to prove the topological compactness theorem. 
We prove this theorem by extending the method of Shehtman's ultrabouquet construction for topological spaces (cf.~Shehtman \cite{She98,She05}) to our framework. 

\subsection{The ultrabouquet construction for bitopological spaces}

We introduce the notion of the ultrabouquet of a countable family \linebreak $\{ \langle X_n, \tau_n^0, \tau_n^1 \rangle\}_{n \in \mathbb{N}}$ of bitopological spaces, and investigate properties of ultrabouquets used in our proof of the topological compactness theorem. 
Before introducing it, we recall the following fact. 

\begin{fact}[cf.~Shehtman {\cite[Lemma 61]{She05}}]\label{FactSc}
Let $\langle X, \tau \rangle$ be a scattered space. Then for any $x \in X$, there exists $Y \subseteq X$ such that $Y$ is a $\tau$-neighborhood of $x$ and $Y \setminus \{ x \} \in \tau$.
\end{fact}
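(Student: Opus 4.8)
The plan is to produce the neighborhood $Y$ by an explicit formula rather than by an existence argument, taking $Y := \overline{d_\tau(\{x\})}$, the complement of the derived set of the singleton $\{x\}$. The guiding intuition is that $d_\tau(\{x\}) \cup \{x\}$ is the closure of $\{x\}$, so deleting the point $x$ from $Y$ should leave the complement of a closed set, and that scatteredness is precisely what guarantees derived sets are closed. The only structural input I expect to need is Fact \ref{dset}: parts 3 and 4 for the openness computations, and part 5 (which requires scatteredness) applied to $\{x\}$, giving the inclusion $d_\tau(d_\tau(\{x\})) \subseteq d_\tau(\{x\})$.

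First I would note $x \in Y$: by the definition of the derived set, $x \in d_\tau(\{x\})$ would force every $\tau$-neighborhood of $x$ to contain a point $y \neq x$ belonging to $\{x\}$, which is impossible, so $x \notin d_\tau(\{x\})$. Next I would check $Y \in \tau$ through the criterion of Fact \ref{dset}.4, $Y \in \tau \iff d_\tau(\overline{Y}) \cap Y = \varnothing$: since $\overline{Y} = d_\tau(\{x\})$, the scatteredness inclusion gives $d_\tau(\overline{Y}) = d_\tau(d_\tau(\{x\})) \subseteq d_\tau(\{x\}) = \overline{Y}$, which is disjoint from $Y$. Hence $Y$ is a genuine $\tau$-neighborhood of $x$. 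Finally, writing $Z := Y \setminus \{x\}$, its complement is $\overline{Z} = d_\tau(\{x\}) \cup \{x\}$, and combining Fact \ref{dset}.3 with the same inclusion yields $d_\tau(\overline{Z}) = d_\tau(d_\tau(\{x\})) \cup d_\tau(\{x\}) = d_\tau(\{x\})$, which is disjoint from $Z \subseteq Y$; so $Z \in \tau$ again by Fact \ref{dset}.4.

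I do not expect a serious obstacle once the candidate is identified: both openness requirements reduce to the single observation that $d_\tau(\{x\})$ and $d_\tau(\{x\}) \cup \{x\}$ have derived set contained in $d_\tau(\{x\})$, which is exactly the content of scatteredness for the singleton. The one point demanding care is purely notational, namely that $\overline{(\cdot)}$ denotes set-theoretic complement throughout, so the \emph{closure} heuristic must be carried out via the derived-set identities of Fact \ref{dset} rather than by invoking a closure operator directly.
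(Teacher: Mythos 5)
Your proof is correct, and there is nothing in the paper to compare it against: the paper states this as a Fact and delegates the proof to Shehtman's Lemma 61, so your explicit construction $Y := \overline{d_\tau(\{x\})}$ supplies precisely the omitted content. Every step checks out against the paper's toolkit. That $x \notin d_\tau(\{x\})$ holds in any space (take $U = X$ in the definition of the derived set, noting $X \in \tau$); Fact \ref{dset}.5 gives $d_\tau(d_\tau(\{x\})) \subseteq d_\tau(\{x\})$, so $d_\tau(\overline{Y}) \cap Y = \varnothing$ and hence $Y \in \tau$ by Fact \ref{dset}.4, making $Y$ a $\tau$-neighborhood of $x$; and since $\overline{Y \setminus \{x\}} = d_\tau(\{x\}) \cup \{x\}$ (an identity valid regardless of where $x$ lies), Fact \ref{dset}.3 together with the same inclusion yields $d_\tau(\overline{Y \setminus \{x\}}) = d_\tau(d_\tau(\{x\})) \cup d_\tau(\{x\}) = d_\tau(\{x\})$, which is disjoint from $Y \setminus \{x\} \subseteq Y$, so $Y \setminus \{x\} \in \tau$ by Fact \ref{dset}.4 again. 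This is in fact the standard argument behind the cited lemma, usually phrased as: in a scattered space derived sets are closed, so the closure of $\{x\}$, namely $\{x\} \cup d_\tau(\{x\})$, is closed and $x$ is isolated in it; your version has the merit of staying entirely inside the paper's derived-set formalism and of making visible that scatteredness is invoked exactly once, through Fact \ref{dset}.5.
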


In this subsection, we fix a countable family $\{ \langle X_n, \tau_n^0, \tau_n^1 \rangle \}_{n \in \mathbb{N}}$ of bitopological spaces satisfying the following conditions:
\begin{itemize}
	\item All topological spaces $\langle X_n, \tau^0_n \rangle$ are scattered; 
	\item The family $\{X_n\}_{n \in \mathbb{N}}$ is pairwise disjoint. 
\end{itemize}
We also fix a family $\{x_n\}_{n \in \mathbb{N}}$ of elements such that $x_n \in X_n$ for every $n \in \mathbb{N}$. 
Then by Fact \ref{FactSc}, for each $n \in \mathbb{N}$, there exists $Y_n \subseteq X_n$ such that $Y_n$ is $\tau_n^0$-neighborhood of $x_n$ and $Y_n \setminus \{ x_n \} \in \tau_n^0$.
%Then, it is known that for each $n \in \mathbb{N}$, for any $x_n \in X_n$ there exist $Y_n \subseteq X_n$ such that $x_n \in Y_n$ and both $Y_n$ and $Y_n \setminus \{ x_n \}$ are elements of $\tau_n^0$ (cf.~Shehtman \cite[Lemma 61]{She05}). 
Let $\mathcal{U}$ be a non-principal ultrafilter on $\mathbb{N}$. 
Let $x_\ast$ be a new element not contained in $\bigcup_{n \in \mathbb{N}} X_n$. 

\begin{defn}\label{UB}
We define an \textit{ultrabouquet} $\mathfrak{X} := \langle X, \tau^0, \tau^1 \rangle$ based on the families $\{ \langle X_n, \tau_n^0, \tau_n^1 \rangle\}_{n \in \mathbb{N}}$ and $\{x_n\}_{n \in \mathbb{N}}$ as follows: 
\begin{itemize}
\item $X:= \bigcup_{n \in \mathbb{N}} (X_n \setminus \{ x_n \} ) \cup \{x_\ast \}$. 

For each $V \subseteq X$ and $n \in \mathbb{N}$, we sometimes restrict $V$ to $X_n$ or $Y_n$. 
%In these situations, we adopt a convention that $x_\ast$ is identified with $x_n$. 
%More precisely, if $x_\ast \in V$, then we assume that $V \cap X_n$ and $V \cap Y_n$ denote the sets $(V \cap X_n) \cup \{ x_n \}$ and $(V \cap Y_n) \cup \{ x_n \}$, respectively. 
In these situations, we would like to identify $x_\ast$ with $x_n$.
From this perspective, we let:
\[
V \restriction X_n :=
\begin{cases}
V \cap X_n & \text{ if } x_\ast \not\in V; \\
((V \setminus \{ x_\ast \}) \cup \{ x_n \})\cap X_n  & \text{ if } x_\ast \in V. 
\end{cases}
\]
Also $V \restriction Y_n$ is defined in a similar way. 

\item $U \in \tau^0 : \iff $
	\begin{itemize}
	\item [(i)] For each $n \in \mathbb{N}$, $U \cap (Y_n \setminus \{ x_n \}) \in \tau_n^0$;  and
	\item [(ii)] If $x_\ast \in U$, then $\{ n \in \mathbb{N} \mid U \restriction Y_n \in \tau_n^0 \} \in \mathcal{U}$.
	\end{itemize}
\item $U \in \tau^1 : \iff$ for each $n \in \mathbb{N}$, $U \restriction X_n \in \tau_n^1$.
\end{itemize}
\end{defn}

\begin{lem}
The ultrabouquet $\mathfrak{X}$ is a bitopological space. 
\end{lem}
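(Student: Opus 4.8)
The plan is to verify the three topology axioms for each of $\tau^0$ and $\tau^1$ separately, since a bitopological space is exactly a set equipped with two topologies. For $\tau^1$ the verification is routine because its definition is purely local: $U \in \tau^1$ iff $U \restriction X_n \in \tau_n^1$ for every $n$. The key observation is that the restriction operator $V \mapsto V \restriction X_n$ commutes with the Boolean operations (finite intersections and arbitrary unions) in the relevant sense, so that $(U_0 \cap U_1) \restriction X_n = (U_0 \restriction X_n) \cap (U_1 \restriction X_n)$ and $(\bigcup_i U_i) \restriction X_n = \bigcup_i (U_i \restriction X_n)$; once these identities are established, the axioms for $\tau^1$ follow immediately from the fact that each $\tau_n^1$ is a topology. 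One must be slightly careful with the case distinction in the definition of $\restriction X_n$ depending on whether $x_\ast$ belongs to the set, but the identities hold in both cases since $x_\ast$ is identified with $x_n$ uniformly.

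For $\tau^0$ I would check conditions (i) and (ii) in Definition \ref{UB} for each axiom in turn. I first handle $X, \varnothing \in \tau^0$: for $\varnothing$ both conditions are vacuous, and for $X$ condition (i) holds since $X \cap (Y_n \setminus \{x_n\}) = Y_n \setminus \{x_n\} \in \tau_n^0$ by the choice of $Y_n$, while condition (ii) holds because $X \restriction Y_n = Y_n$ and each $Y_n$ is a $\tau_n^0$-neighborhood of $x_n$, so the relevant index set is all of $\mathbb{N} \in \mathcal{U}$. For closure under finite intersection, given $U_0, U_1 \in \tau^0$, condition (i) for $U_0 \cap U_1$ follows from $(U_0 \cap U_1) \cap (Y_n \setminus \{x_n\}) = (U_0 \cap (Y_n \setminus \{x_n\})) \cap (U_1 \cap (Y_n \setminus \{x_n\}))$ together with the fact that $\tau_n^0$ is closed under intersection. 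For condition (ii), when $x_\ast \in U_0 \cap U_1$ one uses $(U_0 \cap U_1) \restriction Y_n = (U_0 \restriction Y_n) \cap (U_1 \restriction Y_n)$; the index sets witnessing membership of $U_0$ and $U_1$ both lie in $\mathcal{U}$, and since $\mathcal{U}$ is a filter their intersection lies in $\mathcal{U}$ and is contained in the index set for $U_0 \cap U_1$. Closure under arbitrary unions is analogous, using that restriction commutes with unions for condition (i), and for condition (ii) the observation that if $x_\ast \in \bigcup_i U_i$ then $x_\ast \in U_{i_0}$ for some $i_0$, whose witnessing index set lies in $\mathcal{U}$ and is contained (by upward closure of the filter, after checking $\bigcup_i U_i \restriction Y_n \supseteq U_{i_0} \restriction Y_n \in \tau_n^0$) in the index set for the union.

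The main obstacle, and the step deserving the most care, is condition (ii) under unions, because it is the only place where the ultrafilter structure genuinely enters and where the local behavior at $x_n$ interacts with the global union. The subtlety is that $(\bigcup_i U_i) \restriction Y_n$ need not equal any single $U_{i_0} \restriction Y_n$; rather one shows it is a $\tau_n^0$-open set whenever \emph{one} of the pieces is, by writing $(\bigcup_i U_i) \restriction Y_n = \bigcup_i (U_i \restriction Y_n)$ and noting that on the index set where $U_{i_0} \restriction Y_n \in \tau_n^0$ (with $x_n$ in it) the union remains a $\tau_n^0$-neighborhood of $x_n$. I would isolate the commutation identities $(\bigcap) \restriction Y_n = \bigcap (\restriction Y_n)$ and $(\bigcup) \restriction Y_n = \bigcup (\restriction Y_n)$ as a preliminary remark, since they are used repeatedly and their proof requires only unwinding the two-case definition of $\restriction Y_n$. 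With those identities in hand the remaining manipulations are purely filter-theoretic and topological, and the lemma follows.
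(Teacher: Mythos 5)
Your overall route coincides with the paper's: you verify the axioms for $\tau^0$ directly against conditions (i) and (ii) of Definition \ref{UB}, treat $\tau^1$ as routine via commutation of $\restriction X_n$ with Boolean operations, and correctly isolate the union/condition (ii) step as the only place where real work happens --- the paper isolates exactly this step as Claim \ref{Cl1}. Your handling of $\varnothing$, $X$, and finite intersections matches the paper's, and the commutation identities you propose to isolate are true (with the case analysis on $x_\ast$ you mention).

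However, your justification of the crucial union step has a genuine gap. To verify condition (ii) for $\bigcup_i U_i$ you must show that $(\bigcup_i U_i) \restriction Y_n$ is $\tau_n^0$-\emph{open} for every $n$ in the witnessing set of the chosen $U_{i_0}$, and the reason you give --- that the union ``remains a $\tau_n^0$-neighborhood of $x_n$'' --- only addresses openness at the single point $x_n$; a set containing an open neighborhood of one of its points need not be open. Nor does writing $(\bigcup_i U_i) \restriction Y_n = \bigcup_i (U_i \restriction Y_n)$ finish the job: for a \emph{fixed} $n$, the pieces $U_i \restriction Y_n$ with $x_\ast \in U_i$ are not guaranteed to be $\tau_n^0$-open, since condition (ii) makes each of them open only on some $\mathcal{U}$-large set of indices, and these sets vary with $i$ (an intersection of infinitely many members of $\mathcal{U}$ need not be in $\mathcal{U}$). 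The missing ingredient is condition (i) applied to the \emph{other} sets $U_k$: at a point $x \neq x_n$ of the restricted union, pick $k$ with $x \in U_k \cap (Y_n \setminus \{x_n\})$ and note that this set is $\tau_n^0$-open by (i) and contained in the restricted union; equivalently, decompose $(\bigcup_i U_i) \restriction Y_n = (U_{i_0} \restriction Y_n) \cup \bigcup_i \bigl(U_i \cap (Y_n \setminus \{x_n\})\bigr)$, a union of $\tau_n^0$-open sets. This two-case pointwise argument ($x = x_n$ handled by $U_{i_0}$, $x \neq x_n$ handled by condition (i) for $U_k$) is precisely the paper's proof of Claim \ref{Cl1}; once you insert it, your proposal goes through.
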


\begin{proof}
We only prove that $\tau^0$ is a topology on $X$. 
A proof for $\tau^1$ is similar. 

\begin{itemize}
	\item $\varnothing \in \tau^0$: (i) $\varnothing \cap (Y_n \setminus \{x_n\}) = \varnothing \in \tau_n^0$; and (ii) $x_\ast \notin \varnothing$.
 
	\item $X \in \tau^0$: (i) $X \cap (Y_n \setminus \{x_n\}) = Y_n \setminus \{x_n\} \in \tau_n^0$; and (ii) Since $X \restriction Y_n = Y_n \in \tau_n^0$, $\{n \in \mathbb{N} \mid X \restriction Y_n \in \tau_n^0\} = \mathbb{N} \in \mathcal{U}$ because $\mathcal{U}$ is a non-trivial filter. 

	\item Let $U_0, U_1 \in \tau^0$. 
We show $U_0 \cap U_1 \in \tau^0$. 
(i): By condition (i) for $U_0$ and $U_1$, the sets $U_0 \cap (Y_n \setminus \{x_n\})$ and $U_1 \cap (Y_n \setminus \{x_n\})$ are elements of $\tau_n^0$. 
Then 
\[
	(U_0 \cap U_1) \cap (Y_n \setminus \{x_n\}) = (U_0 \cap (Y_n \setminus \{x_n\})) \cap (U_1 \cap (Y_n \setminus \{x_n\})) \in \tau_n^0. 
\]

(ii): If $x_\ast \in U_0 \cap U_1$, then $x_\ast$ is in both $U_0$ and $U_1$. 
By condition (ii) for $U_0$ and $U_1$, the sets $Z_0 = \{ n \in \mathbb{N} \mid U_0 \restriction Y_n \in \tau_n^0 \}$ and $Z_1 = \{ n \in \mathbb{N} \mid U_1 \restriction Y_n \in \tau_n^0 \}$ are in $\mathcal{U}$. 
Then, 
\[
	Z_0 \cap Z_1 \subseteq \{ n \in \mathbb{N} \mid (U_0 \cap U_1) \restriction Y_n \in \tau_n^0 \} \in \mathcal{U}
\]
because $\mathcal{U}$ is a filter. 

	\item $\{U_i\}_{i \in I}$ be any family of elements of $\tau^0$. 
	We show $\bigcup_{i \in I} U_i \in \tau^0$. 
(i): Since $U_i \cap (Y_n \setminus \{x_n\}) \in \tau_n^0$ for all $i \in I$,
\[
	\left(\bigcup_{i \in U} U_i \right) \cap (Y_n \setminus \{x_n\}) = \bigcup_{i \in U} \left(U_i \cap (Y_n \setminus \{x_n\}) \right) \in \tau_n^0.
\]

(ii): If $x_\ast \in \bigcup_{i \in I} U_i$, then $x_\ast \in U_j$ for some $j \in I$. 
By condition (ii) for $U_j$, $\{n \in \mathbb{N} \mid U_j \restriction Y_n \in \tau_n^0\} \in \mathcal{U}$. 

\begin{cl}\label{Cl1}
For $n \in \mathbb{N}$, if $U_j \restriction Y_n \in \tau_n^0$, then $(\bigcup_{i \in I}U_i) \restriction Y_n \in \tau_n^0$. 
\end{cl}
\begin{proof}[Proof of Claim \ref{Cl1}]
Let $x$ be an arbitrary element of $(\bigcup_{i \in I}U_i) \restriction Y_n$. 
We show that there exists a $\tau_n^0$-neighborhood $V$ of $x$ satisfying $V \subseteq (\bigcup_{i \in I}U_i) \restriction Y_n$. 
We distinguish the following two cases: 

If $x = x_n$, then $U_j \restriction Y_n$ is a required $\tau_n^0$-neighborhood of $x$. 

If $x \neq x_n$, then $x \in U_k \cap (Y_n \setminus \{x_n\})$ for some $k \in I$. 
	By condition (i) for $U_k$, this set is a required $\tau_n^0$-neighborhood of $x$. 

Therefore $(\bigcup_{i \in I}U_i) \restriction Y_n \in \tau_n^0$. 
\end{proof}

From Claim \ref{Cl1}, we have 
\[
	\{n \in \mathbb{N} \mid U_j \restriction Y_n \in \tau_n^0\} \subseteq \{n \in \mathbb{N} \mid (\bigcup_{i \in I} U_i) \restriction Y_n \in \tau_n^0\} \in \mathcal{U}. 
\]
\end{itemize}

\end{proof}

For each $n \in \mathbb{N}$, let $v_n$ be a valuation on $\langle X_n, \tau_n^0, \tau_n^1 \rangle$. 
We define a valuation $v$ on $\mathfrak{X}$ as follows:

\begin{defn}\label{bouquetval}\leavevmode
\begin{itemize}
\item For $x \in X_n \setminus \{x_n\}$, $x \in v (p) : \iff x \in v_n (p)$;
\item $x_\ast \in v (p) :\iff \{ n \in \mathbb{N} \mid x_n \in v_n (p) \} \in \mathcal{U}$.
\end{itemize}
\end{defn}

Let $Y$ denote the set $\bigcup_{n \in \mathbb{N}} (Y_n \setminus \{ x_n \}) \cup \{ x_\ast \}$.
We investigate the images of the valuation $v$ by dividing $X$ into three parts, namely, $Y \setminus \{x_\ast\}$, $X \setminus Y$ and $\{x_\ast\}$. 

First, we investigate in $Y \setminus \{x_\ast\}$. 
If $x \in Y \setminus \{x_\ast\}$, then $x$ is in $Y_n \setminus \{x_n\}$ for some $n \in \mathbb{N}$. 
In the set $Y_n \setminus \{ x_n \}$, the first clause of Definition \ref{bouquetval} is extended to all $\mathcal{L}(\Box, \rhd)$-formulas as follows. 

\begin{lem}\label{TruthL}
For any $\mathcal{L}(\Box, \rhd)$-formula $\varphi$, $n \in \mathbb{N}$ and $x \in Y_n \setminus \{x_n\}$, 
\[
	x \in v (\varphi) \iff x \in v_n (\varphi). 
\]
\end{lem}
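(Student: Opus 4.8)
The plan is to argue by induction on the construction of $\varphi$. The propositional and Boolean cases are immediate: the valuation $v$ agrees with $v_n$ on variables at points of $X_n \setminus \{x_n\}$ by Definition \ref{bouquetval}, and since all Boolean clauses are computed pointwise via the set operations of Definition \ref{ValTop}, the equivalence for $x \in Y_n \setminus \{x_n\}$ is inherited from the induction hypothesis. The whole difficulty is concentrated in the two modal clauses $\Diamond \psi$ and $\psi \rhd \chi$.

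First I would isolate a localization principle for the $\tau^0$-derived set. The key observation is that for $x \in Y_n \setminus \{x_n\}$ the $\tau^0$-neighborhoods and the $\tau_n^0$-neighborhoods of $x$ coincide once they are restricted to $Y_n \setminus \{x_n\}$: by clause (i) of Definition \ref{UB} every $U \in \tau^0$ with $x \in U$ satisfies $U \cap (Y_n \setminus \{x_n\}) \in \tau_n^0$, and conversely any $\tau_n^0$-open $W$ with $x \in W \subseteq Y_n \setminus \{x_n\}$ lies in $\tau^0$ (clause (i) is trivially met on every component by disjointness, and clause (ii) is vacuous since $x_\ast \notin W$). From this I obtain, for every $A \subseteq X$,
\[
x \in d_{\tau^0}(A) \iff x \in d_{\tau_n^0}\big(A \cap (Y_n \setminus \{x_n\})\big),
\]
together with the internal localization $x \in d_{\tau_n^0}(B) \iff x \in d_{\tau_n^0}(B \cap (Y_n \setminus \{x_n\}))$, which holds because $Y_n \setminus \{x_n\} \in \tau_n^0$ is a neighborhood of $x$. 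The $\Diamond \psi$ case then follows at once: combining the displayed equivalence (with $A = v(\psi)$) with the induction hypothesis $v(\psi) \cap (Y_n \setminus \{x_n\}) = v_n(\psi) \cap (Y_n \setminus \{x_n\})$ gives $x \in d_{\tau^0}(v(\psi)) \iff x \in d_{\tau_n^0}(v_n(\psi))$.

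For the $\rhd$ case I would first prove a uniform bridge between the two $e$-operators: for every $U \in \tau^1$,
\[
x \in d_{\tau^0}(v(\psi) \cap U) \iff x \in d_{\tau_n^0}\big(v_n(\psi) \cap (U \restriction X_n)\big),
\]
and likewise with $\chi$ in place of $\psi$. This is assembled from the localization principle, the induction hypothesis, and the set identity $(U \restriction X_n) \cap (Y_n \setminus \{x_n\}) = U \cap (Y_n \setminus \{x_n\})$, which holds regardless of whether $x_\ast \in U$ because $x_n \notin Y_n \setminus \{x_n\}$; note that $U \restriction X_n \in \tau_n^1$ by the definition of $\tau^1$. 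The $\Leftarrow$ direction of the lemma is then straightforward: assuming $x \in e_{\tau_n^0, \tau_n^1}(v_n(\psi), v_n(\chi))$, take an arbitrary $U \in \tau^1$ with $x \in d_{\tau^0}(v(\psi) \cap U)$, pass to $U' := U \restriction X_n \in \tau_n^1$, apply the assumption, and bridge back to $x \in d_{\tau^0}(v(\chi) \cap U)$.

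The main obstacle is the $\Rightarrow$ direction of the $\rhd$ case, where a given $U' \in \tau_n^1$ must be lifted to some $U \in \tau^1$ with $U \restriction X_n = U'$. The subtlety is that $x_n \notin X$ — it has been collapsed into $x_\ast$ — so when $x_n \in U'$ one is forced to put $x_\ast$ into $U$, and then the defining clause of $\tau^1$ demands $U \restriction X_m \in \tau_m^1$ for every other $m$, where the restriction necessarily contains $x_m$; the naive choice $U \restriction X_m = \{x_m\}$ fails since singletons need not be $\tau_m^1$-open. I would resolve this by taking
\[
U := (U' \setminus \{x_n\}) \cup \{x_\ast\} \cup \bigcup_{m \neq n}(X_m \setminus \{x_m\})
\]
when $x_n \in U'$, and simply $U := U'$ when $x_n \notin U'$; one checks directly that $U \restriction X_n = U'$ and $U \restriction X_m = X_m \in \tau_m^1$ for $m \neq n$, so that $U \in \tau^1$ and $x \in U$. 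Feeding this $U$ into the bridge equivalence and using $x \in e_{\tau^0, \tau^1}(v(\psi), v(\chi))$ then yields $x \in d_{\tau_n^0}(v_n(\chi) \cap U')$, completing the induction.
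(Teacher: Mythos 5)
Your proposal is correct and takes essentially the same route as the paper: your localization principle $x \in d_{\tau^0}(A) \iff x \in d_{\tau_n^0}(A \cap (Y_n \setminus \{x_n\}))$ is exactly the content of the paper's Claims 2--5, which pass between $\tau^0$- and $\tau_n^0$-neighborhoods via the sets $V \cap (Y_n \setminus \{x_n\})$, and your lift of $U' \in \tau_n^1$ to $\tau^1$ by filling the other components with $\bigcup_{m \neq n}(X_m \setminus \{x_m\})$ and adding $x_\ast$ is the paper's construction verbatim; packaging the four claims into one reusable biconditional plus a bridge is a tidy but not substantively different organization. The only blemish is the incidental assertion that $x \in U$ for the lifted set, which need not hold in either case but is also never used, since membership in $e_{\tau^0, \tau^1}(v(\psi), v(\chi))$ quantifies over all $U \in \tau^1$ regardless of whether they contain $x$.
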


\begin{proof}
We prove by induction on the construction of $\varphi$. 
We only give a proof of the case $\varphi \equiv \psi \rhd \chi$.

($\Rightarrow$): 
Suppose $x \in v(\psi \rhd \chi)$. 
Then
\begin{align}\label{p1}
 \forall U \in \tau^1 \left[ x \in d_{\tau^0} \left(v (\psi) \cap U \right) \Rightarrow x \in d_{\tau^0} \left( v(\chi) \cap U \right) \right].
\end{align}
In order to prove $x \in v_n (\psi \rhd \chi)$, let $U$ be an arbitrary element of $\tau_n^1$ and assume $x \in d_{\tau_n^0} (v_n (\psi) \cap U)$. 
We would like to show $x \in d_{\tau_n^0} (v_n (\chi) \cap U)$. 
Let 
\[
	U' : = \begin{cases} U & \text{if}\ x_n \notin U; \\ ((U \setminus \{x_n\}) \cup \bigcup_{m \neq n} X_m \setminus \{x_m\}) \cup \{x_\ast\} & \text{if}\ x_n \in U. \end{cases}
\]
Then, it is easily shown that $U' \in \tau^1$ and $U' \restriction X_n = U$.

\begin{cl}\label{Cl2}
$x \in d_{\tau^0} \left( v(\psi) \cap U' \right)$.
\end{cl}
\begin{proof}[Proof of Claim \ref{Cl2}]
Let $V$ be any $\tau^0$-neighborhood of $x$. 
By Definition \ref{UB}, $V \cap ( Y_n \setminus \{ x_n \} ) \in \tau_n^0$, and hence the set $V \cap ( Y_n \setminus \{ x_n \} )$ is a $\tau_n^0$-neighborhood of $x$. 
Since $x \in d_{\tau_n^0} (v_n (\psi) \cap U)$, there exists $y \neq x$ such that $y \in v_n (\psi) \cap U \cap V \cap ( Y_n \setminus \{ x_n \} )$. 
By the induction hypothesis, $y \in v (\psi) \cap U \cap V \cap (Y_n \setminus \{ x_n \})$. 
Hence $y \in v(\psi) \cap U' \cap V$. 
This implies $x \in d_{\tau^0} \left( v(\psi) \cap U' \right)$.
\end{proof}
From (\ref{p1}) and Claim \ref{Cl2}, we have $x \in d_{\tau^0} ( v(\chi) \cap U')$.

\begin{cl}\label{Cl3}
$x \in d_{\tau_n^0}(v_n (\chi) \cap U)$.
\end{cl}
\begin{proof}[Proof of Claim \ref{Cl3}]
Let $V$ be any $\tau_n^0$-neighborhood of $x$. 
Then, $V \cap (Y_n \setminus \{ x_n \} ) \in \tau_n^0$ and $x \in V \cap (Y_n \setminus \{ x_n \})$. 
Together with $x_\ast \not\in V \cap (Y_n \setminus \{ x_n \} )$, it is shown that the set $V \cap (Y_n \setminus \{ x_n \})$ is a $\tau^0$-neighborhood of $x$. 
Since $x \in d_{\tau^0} ( v(\chi) \cap U' )$, there exists $y \neq x$ such that $y \in v(\chi) \cap U' \cap V \cap (Y_n \setminus \{ x_n \})$. 
By the induction hypothesis, $y \in v_n (\chi) \cap U' \cap V \cap (Y_n \setminus \{ x_n \})$. 
Since $U' \restriction X_n = U$, we conclude $y \in v_n(\chi) \cap U \cap V$. 
\end{proof}
We have shown $x \in e_{\tau_n^0, \tau_n^1}(v_n(\psi), v_n(\chi)) = v_n(\psi \rhd \chi)$.

\vspace{0.1in}

($\Leftarrow$): 
Suppose $x \in v_n (\psi \rhd \chi)$. 
Then
\begin{align}\label{p2}
 \forall U \in \tau_n^1 \left[ x \in d_{\tau_n^0}(v_n (\psi) \cap U) \Rightarrow x \in d_{\tau_n^0}( v_n(\chi) \cap U) \right].
\end{align}
Let $U$ be an arbitrary element of $\tau^1$ and assume $x \in d_{\tau^0} (v(\psi) \cap U)$. 
We would like to show $x \in d_{\tau^0} (v(\chi) \cap U)$. 
Let $U' := U \restriction X_n$, then $U' \in \tau_n^1$.

\begin{cl}\label{Cl4}
$x \in d_{\tau_n^0}( v_n(\psi) \cap U')$.
\end{cl}
\begin{proof}[Proof of Claim \ref{Cl4}]
Let $V$ be any $\tau_n^0$-neighborhood of $x$. 
Then $V \cap (Y_n \setminus \{ x_n \}) \in \tau_n^0$ and $x \in V \cap (Y_n \setminus \{ x_n \})$. 
Together with $x_\ast \notin V \cap (Y_n \setminus \{ x_n \})$,
it is shown that the set $V \cap ( Y_n \setminus \{ x_n \})$ is a $\tau^0$-neighborhood of $x$. 
Since $x \in d_{\tau^0} (v(\psi) \cap U)$, there exists $y \neq x$ such that $y \in v(\psi) \cap U \cap V \cap (Y_n \setminus \{x_n\})$. By the induction hypothesis, $y \in v_n(\psi) \cap U \cap V \cap (Y_n\backslash\{x_n\})$, and hence $y \in v_n(\psi) \cap U' \cap V$. %\cap (Y_n\backslash\{x_n\})$. 
%Notice that $y \in X_n$.
Thus we conclude $x \in d_{\tau_n^0}(v_n(\psi) \cap U')$.
\end{proof}

From (\ref{p2}) and Claim \ref{Cl4}, $x \in d_{\tau_n^0}(v_n(\chi) \cap U')$.

\begin{cl}\label{Cl5}
$x \in d_{\tau^0} (v(\chi) \cap U)$.
\end{cl}
\begin{proof}[Proof of Claim \ref{Cl5}]
Let $V$ be any $\tau^0$-neighborhood of $x$. 
By Definition \ref{UB}, $V \cap (Y_n \setminus \{x_n\}) \in \tau_n^0$ and hence $V \cap (Y_n \setminus \{x_n\})$ is a $\tau_n^0$-neighborhood of $x$. 
Since $x \in d_{\tau_n^0}(v_n(\chi) \cap U')$, there exists $y \neq x$ such that $y \in v_n(\chi) \cap U' \cap V \cap (Y_n \setminus \{x_n\})$. 
By the induction hypothesis, $y \in v(\chi) \cap U' \cap V \cap (Y_n \setminus \{x_n\})$, and hence $y \in v(\chi) \cap U \cap V$. 
Thus we conclude $x \in d_{\tau^0}(v (\chi) \cap U)$.
\end{proof}

We have proved $x \in e_{\tau^0, \tau^1}(v(\psi), v(\chi)) = v(\psi \rhd \chi)$.
This completes our proof of Lemma \ref{TruthL}. 
\end{proof}

Secondly, we investigate the behavior of valuations on $\mathfrak{X}$ in $X \setminus Y$. 

\begin{lem}\label{LIso}
For any subset $U$ of $X \setminus Y$, $U \in \tau^0$.
\end{lem}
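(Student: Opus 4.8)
The plan is to verify directly that $U$ satisfies conditions (i) and (ii) in the definition of $\tau^0$ given in Definition \ref{UB}. The guiding observation is that every subset of $X \setminus Y$ lies entirely outside all the distinguished neighborhoods $Y_n$ and also omits the apex point $x_\ast$, so that both defining conditions degenerate to triviality.

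First I would dispose of condition (ii). Since $x_\ast$ is one of the elements comprising $Y$ by the definition $Y = \bigcup_{n \in \mathbb{N}} (Y_n \setminus \{x_n\}) \cup \{x_\ast\}$, the assumption $U \subseteq X \setminus Y$ forces $x_\ast \notin U$. Hence the hypothesis ``$x_\ast \in U$'' of condition (ii) never holds, and (ii) is satisfied vacuously.

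For condition (i), the key structural point is that $X \setminus Y = \bigcup_{n \in \mathbb{N}} (X_n \setminus Y_n)$. Indeed $x_n \in Y_n$ for each $n$ because $Y_n$ is a $\tau_n^0$-neighborhood of $x_n$; thus $(X_n \setminus \{x_n\}) \setminus (Y_n \setminus \{x_n\}) = X_n \setminus Y_n$, and $x_\ast$ has already been excluded. Now fix $n \in \mathbb{N}$. Since the family $\{X_m\}_{m \in \mathbb{N}}$ is pairwise disjoint, the portion of $U$ landing in $X_n$ is contained in $X_n \setminus Y_n$, which is disjoint from $Y_n$ and a fortiori from $Y_n \setminus \{x_n\}$. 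Therefore $U \cap (Y_n \setminus \{x_n\}) = \varnothing$, and the empty set belongs to $\tau_n^0$. As $n$ was arbitrary, condition (i) holds for every $n \in \mathbb{N}$.

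With both conditions verified, I conclude $U \in \tau^0$. I do not expect any real obstacle here: the entire content is the structural remark that subsets of $X \setminus Y$ meet none of the sets $Y_n$ that enter the definition of $\tau^0$, combined with the fact that $x_\ast \in Y$, so the nontrivial clauses of Definition \ref{UB} simply do not apply.
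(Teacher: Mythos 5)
Your proof is correct and follows essentially the same route as the paper's: both verify conditions (i) and (ii) of Definition \ref{UB} directly, observing that $U \cap (Y_n \setminus \{x_n\}) = \varnothing \in \tau_n^0$ and that (ii) holds vacuously because $x_\ast \in Y$ forces $x_\ast \notin U$. Your extra remarks (the identity $X \setminus Y = \bigcup_{n \in \mathbb{N}} (X_n \setminus Y_n)$ and the use of pairwise disjointness) are accurate elaborations of what the paper dismisses with ``clearly.''
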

\begin{proof}
We show that each $U \subseteq X \setminus Y$ satisfies conditions (i) and (ii) in Definition \ref{UB}. 
Clearly $U \cap ( Y_n \setminus \{ x_n \} ) = \varnothing$ for any $n \in \mathbb{N}$, and hence (i) holds. 
Moreover, (ii) vacuously holds since $U$ does not contain $x_\ast$.
\end{proof}

The following lemma shows that every element of $X \setminus Y$ behaves as a dead end of Kripke frames. 

\begin{lem}\label{deadend}
For any $x \in X \setminus Y$ and any $Z \subseteq X$, $x \in cd_{\tau^0}(Z)$. 
\end{lem}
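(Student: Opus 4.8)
The plan is to reduce the claim to finding a single convenient $\tau^0$-neighborhood of $x$, exploiting the fact that points of $X \setminus Y$ are $\tau^0$-isolated. Recall that by definition $cd_{\tau^0}(Z) = \overline{d_{\tau^0}(\overline{Z})}$, so the goal $x \in cd_{\tau^0}(Z)$ is equivalent to $x \notin d_{\tau^0}(\overline{Z})$. Unwinding the definition of the derived set, $x \notin d_{\tau^0}(\overline{Z})$ holds precisely when there exists a $\tau^0$-neighborhood $U$ of $x$ admitting no witness $y \neq x$ with $y \in U \cap \overline{Z}$; equivalently, such that $U \cap \overline{Z} \subseteq \{x\}$, i.e. $U \setminus \{x\} \subseteq Z$. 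So the whole lemma comes down to producing one $\tau^0$-open set $U$ containing $x$ with $U \setminus \{x\} \subseteq Z$.

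First I would observe that, since $x \in X \setminus Y$, the singleton $\{x\}$ is a subset of $X \setminus Y$, and hence $\{x\} \in \tau^0$ by Lemma \ref{LIso}. This is the crux: the isolatedness of points outside $Y$ is exactly what Lemma \ref{LIso} provides, and it is what makes such points behave like dead ends (a point with no $\tau^0$-successors, for which every $\Box$-formula holds vacuously). Taking $U := \{x\}$, we get a $\tau^0$-neighborhood of $x$ with $U \setminus \{x\} = \varnothing$, so trivially $U \setminus \{x\} \subseteq Z$ regardless of which subset $Z \subseteq X$ we started with.

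Finally I would assemble these: the neighborhood $U = \{x\}$ witnesses $x \notin d_{\tau^0}(\overline{Z})$, and therefore $x \in \overline{d_{\tau^0}(\overline{Z})} = cd_{\tau^0}(Z)$, as required. There is essentially no obstacle here beyond correctly unwinding the co-derived-set definition; the only substantive input is Lemma \ref{LIso}, and the argument is uniform in $Z$ precisely because the witnessing neighborhood can be taken as small as the singleton. I would keep the write-up short, stating the reduction to $U \setminus \{x\} \subseteq Z$ and then invoking Lemma \ref{LIso} to supply $\{x\} \in \tau^0$.
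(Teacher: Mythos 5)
Your proposal is correct and matches the paper's proof essentially verbatim: both invoke Lemma \ref{LIso} to obtain $\{x\} \in \tau^0$ and use this singleton as the $\tau^0$-neighborhood witnessing $x \notin d_{\tau^0}(\overline{Z})$, hence $x \in cd_{\tau^0}(Z)$. The only difference is presentational --- you unwind the definition of $cd_{\tau^0}$ more explicitly --- which is fine.
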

\begin{proof}
Let $x \in X \setminus Y$. 
Then, by Lemma \ref{LIso}, $\{x\} \in \tau^0$. 
Since $\overline{Z} \cap \{x\} \subseteq \{x\}$, we have $x \notin d_{\tau^0}(\overline{Z})$. 
That is, $x \in cd_{\tau^0}(Z)$. 
\end{proof}

For $x \in X_n \setminus Y_n$, even if $x \in v_n(\Diamond \varphi)$, by Lemma \ref{deadend}, $x \notin v(\Diamond \varphi)$. 
So the equivalence of Lemma \ref{TruthL} cannot be extended to elements of $X_n \setminus \{x_n\}$. 

Thirdly, the following lemma is a generalization of the second clause of Definition \ref{bouquetval}. 
In particular, it plays a key role in our proof of the topological compactness theorem. 

\begin{lem}\label{TruthL2}
For any $\mathcal{L}(\Box, \rhd)$-formula $\varphi$, 
\[
	x_\ast \in v(\varphi) \iff \{ n \in \mathbb{N} \mid x_n \in v_n(\varphi) \} \in \mathcal{U}.
\]
\end{lem}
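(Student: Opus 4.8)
The plan is to prove Lemma~\ref{TruthL2} by induction on the construction of $\varphi$, with the understanding that the propositional-variable base case is exactly the second clause of Definition~\ref{bouquetval} and the Boolean connective cases follow routinely from the filter properties of $\mathcal{U}$ (for example, $x_\ast \in v(\neg\psi) \iff x_\ast \notin v(\psi) \iff \{n \mid x_n \in v_n(\psi)\} \notin \mathcal{U} \iff \{n \mid x_n \notin v_n(\psi)\} \in \mathcal{U}$, using that $\mathcal{U}$ is an ultrafilter, and similarly $\land, \lor$ from the fact that $\mathcal{U}$ is a filter closed under supersets). The real content lies in the three modal cases $\Box\psi$, $\Diamond\psi$ and $\psi \rhd \chi$, and these are where I would concentrate.

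For the case $\varphi \equiv \Diamond\psi$, I would unfold $x_\ast \in v(\Diamond\psi) = d_{\tau^0}(v(\psi))$, which says every $\tau^0$-neighborhood $U$ of $x_\ast$ meets $v(\psi)$ in a point other than $x_\ast$. The key is to relate $\tau^0$-neighborhoods of $x_\ast$ to the local topologies via condition (ii) of Definition~\ref{UB}: membership of $x_\ast$ in such a derived set should be governed by the ultrafilter measure of the set of indices $n$ for which $x_n \in d_{\tau_n^0}(v_n(\psi)) = v_n(\Diamond\psi)$. Concretely, I would show the forward direction by taking, for each $n$ in the relevant $\mathcal{U}$-large set, a $\tau_n^0$-neighborhood $Y_n$ of $x_n$ witnessing $x_n \in v_n(\Diamond\psi)$, and assembling these (together with $\{x_\ast\}$) into a $\tau^0$-neighborhood of $x_\ast$ whose restriction to each $Y_n$ lies in $\tau_n^0$, invoking Lemma~\ref{TruthL} to transfer $v(\psi)$-membership back to $v_n(\psi)$ on $Y_n \setminus \{x_n\}$. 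Conversely, given an arbitrary $\tau^0$-neighborhood $U$ of $x_\ast$, condition (ii) guarantees $\{n \mid U \restriction Y_n \in \tau_n^0\} \in \mathcal{U}$, and intersecting this with the $\mathcal{U}$-large index set from the hypothesis yields, by filter closure, a nonempty index and hence a witnessing point of $v(\psi)$ inside $U$. The $\Box\psi$ case is then dual, either by a parallel argument on $cd_{\tau^0}$ or by reducing $\Box$ to $\neg\Diamond\neg$ and quoting the $\Diamond$ case together with the already-settled negation case.

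For the interpretability case $\varphi \equiv \psi \rhd \chi$, I would unfold $x_\ast \in v(\psi \rhd \chi) = e_{\tau^0,\tau^1}(v(\psi), v(\chi))$, which quantifies over $U \in \tau^1$ and compares membership of $x_\ast$ in $d_{\tau^0}(v(\psi)\cap U)$ with that in $d_{\tau^0}(v(\chi)\cap U)$. Here I expect to re-run the $\tau^1$-neighborhood bookkeeping from Definition~\ref{UB}, where $U \in \tau^1$ iff $U \restriction X_n \in \tau_n^1$ for every $n$, in tandem with the derived-set analysis from the $\Diamond$ case (since $e$ is built from $d_{\tau^0}$). The strategy is to package the local conditions $x_n \in e_{\tau_n^0,\tau_n^1}(v_n(\psi), v_n(\chi))$ for $\mathcal{U}$-many $n$ into the global condition at $x_\ast$, transferring between $v$ and $v_n$ on each $Y_n \setminus \{x_n\}$ by Lemma~\ref{TruthL}, and lifting or restricting neighborhoods between $\tau^1$ and $\tau_n^1$ exactly as was done in the $\rhd$ case of Lemma~\ref{TruthL} (mirroring the construction of $U'$ there).

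The main obstacle, I expect, is the correct orchestration of the two layers of quantification in the $\rhd$ case: the outer universal quantifier over $U \in \tau^1$ and the inner derived-set condition each interact with the ultrafilter, and one must take care that the set of "good" indices produced by the inner $d_{\tau^0}$ argument remains $\mathcal{U}$-large uniformly across the choices of $U$, rather than depending on $U$ in a way that breaks the filter intersection. In particular, in the direction that builds a global witness from local data, I would need to choose the neighborhoods $U \restriction X_n$ coherently and verify that the assembled $U \in \tau^1$ genuinely has its restrictions in each $\tau_n^1$; and in the converse direction, I would need condition (ii) of $\tau^0$ to supply a $\mathcal{U}$-large index set that survives intersection with the hypothesis's index set. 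Once these measure-theoretic intersections are handled, the transfer between $v$ and the $v_n$ is purely Lemma~\ref{TruthL}, so the delicacy is entirely in the ultrafilter combinatorics, not in the topology.
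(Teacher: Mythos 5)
Your overall plan coincides with the paper's proof: induction on $\varphi$ with Definition~\ref{bouquetval} as the base case, ultrafilter reasoning for the Boolean connectives, and, in the modal cases, transfer between $v$ and the $v_n$ on $Y_n \setminus \{x_n\}$ via Lemma~\ref{TruthL}, combined with assembling local data into global sets of the form $\bigcup_n \left(W_n \cap (Y_n \setminus \{x_n\})\right) \cup \{x_\ast\}$ and intersecting $\mathcal{U}$-large index sets. Your converse directions are exactly the paper's: use condition (ii) of Definition~\ref{UB} to get a large set $Z_1$ of indices where $V \restriction Y_n \in \tau_n^0$, intersect with the hypothesis set $Z_0$, and extract a witness point. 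One local repair: your forward $\Diamond$ direction is garbled as stated, since there is no neighborhood ``witnessing'' $x_n \in v_n(\Diamond\psi) = d_{\tau_n^0}(v_n(\psi))$ --- membership in a derived set is a condition on \emph{all} neighborhoods. That direction must run contrapositively: for each $n$ in the $\mathcal{U}$-large set $\{n \mid x_n \notin v_n(\Diamond\psi)\}$ choose $W_n$ with $v_n(\psi) \cap W_n \subseteq \{x_n\}$ and assemble those (also, your name $Y_n$ for this neighborhood clashes with the fixed $Y_n$ of the construction). The same contraposition drives the paper's forward $\rhd$ case, whose second half ($x_\ast \notin d_{\tau^0}(v(\chi) \cap U)$) is precisely this $\Diamond$-style assembly.

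The one genuinely missing idea is the mechanism behind your ``coherence'' requirement in the forward $\rhd$ direction. Because the restriction operation identifies $x_\ast$ with $x_n$, for a single $U \subseteq X$ one has $x_n \in U \restriction X_n$ for \emph{every} $n$ (if $x_\ast \in U$) or for \emph{no} $n$ (if $x_\ast \notin U$); hence no single $U \in \tau^1$ can restrict to a family $\{U_n\}_n$ of local counterexample witnesses that is mixed as to containing $x_n$. The paper resolves this by splitting $Z_0$ into $Z_{00} = \{n \in Z_0 \mid x_n \notin U_n\}$ and $Z_{01} = \{n \in Z_0 \mid x_n \in U_n\}$, using that $\mathcal{U}$ is an ultrafilter to keep a uniform large half $Z_{0i}$, and only then assembling $U$ with $U \restriction X_n = U_n$ for all $n \in Z_{0i}$, mirroring the $U'$ construction in Lemma~\ref{TruthL}; your sketch flags the need but does not supply this splitting, and without it the assembly step fails. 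By contrast, the uniformity worry you raise for the converse direction dissolves: for fixed $U$ with restrictions $U_n$ and each $\tau^0$-neighborhood $V$ of $x_\ast$, one needs only a \emph{single} index $n \in Z_0 \cap Z_1$ with $x_n \in d_{\tau_n^0}(v_n(\psi) \cap U_n)$ (the paper's Claim~\ref{Cl6}, proved by contradiction via the assembled neighborhood), not a $\mathcal{U}$-large set of such indices depending uniformly on $U$ or $V$.
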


\begin{proof}
We prove by induction on the construction of $\varphi$. 
We only give a proof of the case $\varphi \equiv \psi \rhd \chi$.

($\Rightarrow$): 
We prove the contrapositive. 
Assume $\{ n \in \mathbb{N} \mid x_n \in v_n(\psi \rhd \chi) \} \not\in \mathcal{U}$. Since $\mathcal{U}$ is an ultrafilter on $\mathbb{N}$, $Z_0: = \{ n \in \mathbb{N} \mid x_n \not\in v_n(\psi \rhd \chi) \} \in \mathcal{U}$. 
For each $n \in Z_0$, there exists $U_n \in \tau_n^1$ such that
\begin{align}\label{p3}
	x_n \in d_{\tau_n^0}(v_n(\psi) \cap U_n)\ \& \ x_n \not\in d_{\tau_n^0}(v_n(\chi) \cap U_n).
\end{align}
Let $Z_{00} : = \{n \in Z_0 \mid x_n \notin U_n\}$ and $Z_{01} : = \{n \in Z_0 \mid x_n \in U_n\}$. 
Then, $Z_0 = Z_{00} \cup Z_{01}$. 
Since $\mathcal{U}$ is an ultrafilter, we get an $i \in \{0, 1\}$ such that $Z_{0i} \in \mathcal{U}$. 
Let 
\[
	U : = \begin{cases} \bigcup_{n \in Z_{0i}} U_n & \text{if}\ i = 0; \\ \left(\bigcup_{n \in Z_{0i}} U_n \setminus \{x_n\} \right) \cup \left( \bigcup_{n \notin Z_{0i}} X_n \setminus \{x_n\} \right) \cup \{x_\ast\} & \text{if}\ i = 1. \end{cases}
\]
Then, it is shown that $U$ is an element of $\tau^1$ satisfying $U \restriction X_n = U_n$ for all $n \in Z_{0i}$. 

First, we prove $x_\ast \in d_{\tau^0}(v(\psi) \cap U)$. 
Let $V$ be any $\tau^0$-neighborhood of $x_\ast$. 
By Definition \ref{UB}, $Z_1:= \{ n\in\mathbb{N}\mid V \restriction Y_n \in \tau_n^0 \} \in \mathcal{U}$. 
Since $Z_{0i} \cap Z_1 \in \mathcal{U}$, $Z_{0i} \cap Z_1$ is non-empty, and fix some $n \in Z_{0i} \cap Z_1$. 
Since the set $V \restriction Y_n$ is a $\tau_n^0$-neighborhood of $x_n$, by (\ref{p3}), there exists $y \in X_n \setminus\{ x_n\}$ such that $y \in v_n(\psi) \cap U_n \cap V \cap (Y_n \setminus \{ x_n\})$. 
Applying Lemma \ref{TruthL}, $y \in v(\psi) \cap U_n \cap V \cap (Y_n \setminus \{ x_n \})$. 
Since $U_n = U \restriction X_n$, we obtain $y \in v(\psi) \cap U \cap V$ and $y \neq x_\ast$. 
Thus $x_\ast \in d_{\tau^0}(v(\psi) \cap U)$.

Secondly, we prove $x_\ast \not\in d_{\tau^0} (v(\chi) \cap U)$. 
By (\ref{p3}), for each $n \in Z_{0i}$, there exists a $\tau_n^0$-neighborhood $W_n$ of $x_n$ such that $v_n(\chi) \cap U_n \cap W_n \subseteq \{ x_n \}$. 
Let $W := \bigcup_{n \in Z_{0i}} (W_n \cap (Y_n \setminus \{x_n\}) \cup \{x_\ast\}$. 
We show $W \in \tau^0$. 
(i) For each $n \in \mathbb{N}$,
\[
 W \cap (Y_n \setminus \{ x_n\}) =
\begin{cases}
W_n \cap (Y_n \setminus \{ x_n \}) & \text{if} \ n \in Z_{0i}; 
\\
\varnothing & \text{otherwise.}
\end{cases}
\]
Then, $W \cap (Y_n \setminus \{ x_n\}) \in \tau_n^0$. 
(ii) If $n \in Z_{0i}$, then $W \restriction Y_n = W_n \cap Y_n \in \tau_n^0$. 
Hence $Z_{0i} \subseteq \{ n \in \mathbb{N} \mid W \restriction Y_n \in \tau_n^0 \} \in \mathcal{U}$ because $\mathcal{U}$ is a filter. 
Thus $W$ is a $\tau^0$-neighborhood of $x_\ast$. 

Suppose, towards a contradiction, that $x_\ast \in d_{\tau^0}(v(\chi) \cap U)$. 
Then there exists $y \neq x_\ast$ such that $y \in v(\chi) \cap U \cap W$. 
Since $y \in W$, for some $n \in Z_{0i}$, $y \in v(\chi) \cap U \cap W_n \cap (Y_n \setminus \{ x_n \})$. 
Applying Lemma \ref{TruthL}, $y \in v_n(\chi) \cap U \cap W_n \cap (Y_n \setminus \{ x_n \})$. 
Since $U \restriction X_n = U_n$, $y \in v_n(\chi) \cap U_n \cap W_n$. 
This contradicts $v_n(\chi) \cap U_n \cap W_n \subseteq \{ x_n \}$.
Therefore $x_\ast \notin d_{\tau^0}(v(\chi) \cap U)$. 

We conclude $x_\ast \not\in e_{\tau^0, \tau^1}(v(\psi), v(\chi))$, and hence $x_\ast \not\in v(\psi \rhd \chi)$.

\vspace{0.1in}

($\Leftarrow$):
Suppose $Z_0 := \{ n \in \mathbb{N} \mid x_n \in v_n(\psi \rhd \chi) \} \in \mathcal{U}$. In order to prove $x_\ast \in v(\psi \rhd \chi)$, suppose that $U \in \tau^1$ and $x_\ast \in d_{\tau^0} (v(\psi) \cap U)$. 
We would like to show $x_\ast \in d_{\tau^0}(v(\chi) \cap U)$. 
Let $V$ be any $\tau^0$-neighborhood of $x_\ast$. By Definition \ref{UB}, $Z_1: = \{ n \in \mathbb{N} \mid V \restriction Y_n \in \tau_n^0 \} \in \mathcal{U}$. 
For each $n \in \mathbb{N}$, let $U_n : = U \restriction X_n$. 
Then $U_n \in \tau_n^1$.

\begin{cl}\label{Cl6}
There exists $n \in Z_0 \cap Z_1$ such that $x_n \in d_{\tau_n^0} (v_n (\psi) \cap U_n)$. 
\end{cl}
\begin{proof}[Proof of Claim \ref{Cl6}]
Suppose, towards a contradiction, that for all $n \in Z_0 \cap Z_1$, $x_n \not\in d_{\tau_n^0} (v_n(\psi) \cap U_n )$. 
Then for each $n \in Z_0 \cap Z_1$, there exists $W_n \in \tau_n^0$ such that $x_n \in W_n$ and $v_n(\psi) \cap U_n \cap W_n \subseteq \{ x_n \}$. 
Let $W := \bigcup_{n \in Z_0 \cap Z_1} (W_n \cap (Y_n \setminus \{x_n\}) \cup \{x_\ast\}$. 

We show $W \in \tau^0$. 
(i) For each $n \in \mathbb{N}$, 
\[
	W \cap (Y_n \setminus \{ x_n \}) = \begin{cases} W_n \cap (Y_n \setminus \{ x_n \}) & \text{if}\ n \in Z_0 \cap Z_1; \\ \varnothing & \text{otherwise}, \end{cases}
\]
and this set is in $\tau_n^0$. 
(ii) If $n \in Z_0 \cap Z_1$, then $W \restriction Y_n = W_n \cap Y_n \in \tau_n^0$. 
Thus $Z_0 \cap Z_1 \subseteq \{ n \in \mathbb{N} \mid W \restriction Y_n \in \tau_n^0 \} \in \mathcal{U}$ because $\mathcal{U}$ is a filter. 
Therefore $W \in \tau^0$.

Since $x_\ast \in d_{\tau^0} (v(\psi) \cap U)$, there exists $y \neq x_\ast$ such that $y \in v(\psi) \cap U \cap W$. 
Since $y \in W$, there exists $m \in Z_0 \cap Z_1$ such that $y \in v(\psi) \cap U_m \cap W_m \cap (Y_m \setminus \{ x_m \})$. 
Applying Lemma \ref{TruthL}, $y \in v_m(\psi) \cap U_m \cap W_m \cap (Y_m \setminus \{ x_m \})$. 
Then $y \neq x_m$ and $y \in v_m(\psi) \cap U_m \cap W_m$. 
This contradicts $v_m(\psi) \cap U_m \cap W_m \subseteq \{ x_m \}$. 
Our proof of Claim \ref{Cl6} is completed.
\end{proof}

We continue the proof of $x_\ast \in d_{\tau^0}(v(\chi) \cap U)$. 
From Claim \ref{Cl6}, there exists $n \in Z_0 \cap Z_1$ such that $x_n \in d_{\tau_n^0}(v_n (\psi) \cap U_n)$. 
Since $n \in Z_0$, we have $x_n \in v_n (\psi \rhd \chi)$. 
Therefore $x_n \in d_{\tau_n^0} (v_n(\chi) \cap U_n)$. 
Moreover, since $n \in Z_1$, we have $V \restriction Y_n \in \tau_n^0$. 
This set is a $\tau_n^0$-neighborhood of $x_n$, and thus there exists $y \neq x_n$ such that $y \in v_n (\chi) \cap U_n \cap (V \restriction Y_n)$. 
Since $y \neq x_n$, we obtain $y \in v(\chi) \cap U_n \cap V \cap (Y_n \setminus \{x_n\})$ by Lemma \ref{TruthL}. 
In particular, $y \neq x_\ast$ and $y \in v(\chi) \cap U \cap V$. 
This implies $x_\ast \in d_{\tau^0}(v(\chi) \cap U)$. 
We conclude $x_\ast \in v(\psi \rhd \chi)$. 
\end{proof}

The following lemma is an adaptation of Shehtman's result on the preservation of validity in ultrabouquets to our framework (See Shehtman \cite[Lemma 5.6]{She98}).

\begin{lem}\label{preservation1}
If an $\mathcal{L}(\Box, \rhd)$-formula $\varphi$ is valid in all $\langle X_n, \tau^0_n, \tau^1_n \rangle$, then for all valuations $v'$ on $\mathfrak{X}$ and all $x \in Y$, $x \in v'(\varphi)$. 
\end{lem}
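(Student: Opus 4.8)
The plan is to reduce the claim about an arbitrary valuation $v'$ on the ultrabouquet to the hypothesis, which concerns validity under \emph{all} valuations $v_n$ on the factor spaces. The key observation is that Lemma \ref{TruthL} and Lemma \ref{TruthL2} were proved for the \emph{specific} valuation $v$ built from a chosen family $\{v_n\}_{n \in \mathbb{N}}$ via Definition \ref{bouquetval}, but the reachability of every point of $Y$ from the factor spaces means that \emph{any} valuation $v'$ on $\mathfrak{X}$ is induced, in the relevant sense, by a suitable family of valuations on the $\langle X_n, \tau_n^0, \tau_n^1 \rangle$. So first I would, given the arbitrary $v'$, define for each $n \in \mathbb{N}$ a valuation $v_n$ on $\langle X_n, \tau_n^0, \tau_n^1 \rangle$ on propositional variables by transporting $v'$ back along the identification of $Y_n \setminus \{x_n\}$ with a subset of $X$ and of $x_n$ with $x_\ast$; concretely, for $x \in X_n \setminus \{x_n\}$ set $x \in v_n(p) :\iff x \in v'(p)$, and set $x_n \in v_n(p) :\iff x_\ast \in v'(p)$.

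The point of this choice is that the family $\{v_n\}$ then reproduces $v'$ through Definition \ref{bouquetval}: the valuation $v$ determined by this family agrees with $v'$ on propositional variables at every point of $Y$, because for $x \in Y_n \setminus \{x_n\}$ we have $x \in v(p) \iff x \in v_n(p) \iff x \in v'(p)$, and at $x_\ast$ we need the ultrafilter clause $x_\ast \in v(p) \iff \{n \mid x_n \in v_n(p)\} \in \mathcal{U}$ to match $x_\ast \in v'(p)$. The latter is the one genuinely delicate point, and I expect it to be the main obstacle: nothing forces the constant-truth-value set $\{n \mid x_n \in v_n(p)\} = \{n \mid x_\ast \in v'(p)\}$ to be either all of $\mathbb{N}$ or empty in a way that lands in $\mathcal{U}$ unless we are careful. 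In fact, with the definition above, $\{n \mid x_n \in v_n(p)\}$ is either $\mathbb{N}$ (when $x_\ast \in v'(p)$) or $\varnothing$ (when $x_\ast \notin v'(p)$), and since $\mathcal{U}$ is a filter we have $\mathbb{N} \in \mathcal{U}$ and $\varnothing \notin \mathcal{U}$, so the equivalence holds. Thus $v$ and $v'$ agree on all propositional variables throughout $Y$.

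Once the two valuations agree on propositional variables at every point of $Y$, a routine induction on the construction of $\varphi$ shows $v(\varphi) \cap Y = v'(\varphi) \cap Y$; here the only clauses requiring care are $\Box$, $\Diamond$ and $\rhd$, and for points of $Y$ these are governed entirely by the local structure of $\tau^0$ and $\tau^1$ near $Y$, on which $v$ and $v'$ coincide. With this agreement in hand, the hypothesis that $\varphi$ is valid in every $\langle X_n, \tau_n^0, \tau_n^1 \rangle$ gives $v_n(\varphi) = X_n$ for each $n$, so in particular $x \in v_n(\varphi)$ for every $x \in Y_n \setminus \{x_n\}$ and $x_n \in v_n(\varphi)$ for every $n$. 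Applying Lemma \ref{TruthL} to the points of $Y \setminus \{x_\ast\}$ yields $x \in v(\varphi)$ there, and applying Lemma \ref{TruthL2} together with $\{n \mid x_n \in v_n(\varphi)\} = \mathbb{N} \in \mathcal{U}$ yields $x_\ast \in v(\varphi)$. Combining with $v(\varphi) \cap Y = v'(\varphi) \cap Y$ gives $x \in v'(\varphi)$ for all $x \in Y$, as required.
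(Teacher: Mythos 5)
Your proposal is correct and matches the paper's proof essentially step for step: the paper likewise pulls $v'$ back to factor valuations by setting $x \in v_n'(p) \iff x \in v'(p)$ for $x \in X_n \setminus \{x_n\}$ and $x_n \in v_n'(p) \iff x_\ast \in v'(p)$, observes that the valuation reconstructed via Definition \ref{bouquetval} coincides with $v'$ precisely because $\varnothing \notin \mathcal{U}$ and $\mathbb{N} \in \mathcal{U}$, and then invokes Lemmas \ref{TruthL} and \ref{TruthL2} (the paper argues contrapositively, but that is only a cosmetic difference). One small simplification: your $v_n$ agree with $v'$ on propositional variables at every point of $X$, not merely of $Y$, so $v = v'$ outright, and your restricted induction establishing $v(\varphi) \cap Y = v'(\varphi) \cap Y$ via the ``local structure near $Y$'' is unnecessary.
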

\begin{proof}
We prove the contrapositive. 
Suppose that there exist a valuation $v'$ on $\mathfrak{X}$ and $x \in Y$ such that $x \notin v'(\varphi)$. 
For each $n \in \mathbb{N}$, we define a valuation $v_n'$ on $\langle X_n, \tau^0_n, \tau^1_n \rangle$ as follows: 
\begin{itemize}
\item For $x \in X_n \setminus \{x_n\}$, $x \in v_n' (p) : \iff x \in v' (p)$; 
\item $x_n \in v_n'(p) : \iff x_\ast \in v' (p)$.
\end{itemize}
Then the valuation on $\mathfrak{X}$ defined from $\{v_n'\}_{n \in \mathbb{N}}$ in Definition \ref{bouquetval} coincides with $v'$ because $\varnothing \notin \mathcal{U}$ and $\mathbb{N} \in \mathcal{U}$. 
We distinguish the following two cases. 

If $x \in Y_n \setminus \{x_n\}$, then by Lemma \ref{TruthL}, we obtain $x \notin v_n'(\varphi)$. 

If $x = x_\ast$, then by Lemma \ref{TruthL2}, $\{ n \in \mathbb{N} \mid x_n \in v_n'(\varphi) \} \notin \mathcal{U}$. 
Since $\mathbb{N} \in \mathcal{U}$, for some $n \in \mathbb{N}$, $x_n \notin v_n'(\varphi)$. 

Thus in either case, $\varphi$ is not valid in $\langle X_n, \tau^0_n, \tau^1_n \rangle$ for some $n \in \mathbb{N}$. 
\end{proof}

From the viewpoint of Lemma \ref{deadend}, the set $Y$ in the statement of Lemma \ref{preservation1} does not seem to be replaceable by $X$ in general. 
However, we prove that this is actually the case.  
First, we prove that the validity of the axiom $\Box(\Box p \to p) \to \Box p$ of $\GL$ is preserved. 

\begin{lem}\label{bitop_sc}
The topological space $\langle X, \tau^0 \rangle$ is scattered. 
That is, the ultrabouquet $\mathfrak{X}$ is a $\CL$-space. 
\end{lem}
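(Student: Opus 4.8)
The plan is to verify the defining property of scattered spaces directly: for every nonempty $Z \subseteq X$ I must exhibit a point of $Z$ isolated within $Z$, i.e.\ show $Z \setminus d_{\tau^0}(Z) \neq \varnothing$. I would organize the argument around the natural partition coming from Definition \ref{UB}, namely $X = \{x_\ast\} \cup \bigl(\bigcup_{n}(Y_n \setminus \{x_n\})\bigr) \cup (X \setminus Y)$, and locate the isolated point according to which region $Z$ meets. The first region is immediate: by Lemma \ref{LIso} every subset of $X \setminus Y$ is $\tau^0$-open, so each $x \in X \setminus Y$ has the open singleton $\{x\}$ as a neighborhood. Hence if $Z$ meets $X \setminus Y$, any such point already witnesses $Z \setminus d_{\tau^0}(Z) \neq \varnothing$, and we may assume $Z \subseteq Y$.

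The heart of the proof is the case where $Z \subseteq Y$ meets some $Y_n \setminus \{x_n\}$. Put $Z' := Z \cap (Y_n \setminus \{x_n\})$, which is nonempty and contained in $X_n$. Since $\langle X_n, \tau_n^0\rangle$ is scattered, there is a point $y \in Z' \setminus d_{\tau_n^0}(Z')$, so some $\tau_n^0$-neighborhood $W_0$ of $y$ satisfies $W_0 \cap Z' \subseteq \{y\}$. I would then shrink $W_0$ to $W := W_0 \cap (Y_n \setminus \{x_n\})$, which remains $\tau_n^0$-open because $Y_n \setminus \{x_n\} \in \tau_n^0$ by the choice of $Y_n$, and which contains $y$ since $y \in Z' \subseteq Y_n \setminus \{x_n\}$. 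The key verification is that $W \in \tau^0$: condition (i) of Definition \ref{UB} holds because $W \cap (Y_m \setminus \{x_m\})$ equals $W \in \tau_n^0$ for $m = n$ and equals $\varnothing$ for $m \neq n$ by disjointness of the $X_m$, while condition (ii) holds vacuously since $x_\ast \notin W$. Finally, because $W \subseteq Y_n \setminus \{x_n\}$ we have $W \cap Z = W \cap Z' \subseteq \{y\}$, so $y \in Z \setminus d_{\tau^0}(Z)$.

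The only remaining possibility is $Z \subseteq \{x_\ast\}$, i.e.\ $Z = \{x_\ast\}$, in which case $x_\ast \notin d_{\tau^0}(\{x_\ast\})$ holds trivially since any witnessing point would have to be distinct from $x_\ast$ yet equal to it. This exhausts the three regions and shows $\langle X, \tau^0\rangle$ is scattered, whence the final assertion that $\mathfrak{X}$ is a $\CL$-space follows immediately from Corollary \ref{Sound_CL}. I expect the one genuinely delicate step to be the neighborhood-transfer in the second paragraph: one must confirm simultaneously that shrinking $W_0$ into $Y_n \setminus \{x_n\}$ keeps it $\tau_n^0$-open, promotes it to a $\tau^0$-open set via conditions (i) and (ii), and isolates $y$ in \emph{all} of $Z$ rather than merely in $Z'$ — and the last point works precisely because $W$ avoids $x_\ast$ and every $X_m$ with $m \neq n$. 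Everything else reduces to the routine case split against the three regions of $X$.
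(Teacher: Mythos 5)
Your proof is correct, but it takes a genuinely different route from the paper. The paper never verifies the scatteredness condition directly: it invokes the Simmons--Esakia characterization (Fact \ref{SimEsa}) and shows instead that L\"ob's axiom $\Box(\Box p \to p) \to \Box p$ is valid in $\mathfrak{X}$ — on $Y$ by the validity-preservation result (Lemma \ref{preservation1}, which in turn rests on the truth lemmas \ref{TruthL} and \ref{TruthL2}), and on $X \setminus Y$ by the dead-end lemma (Lemma \ref{deadend}, since every such point satisfies $\Box p$). You instead argue purely point-set-theoretically: for nonempty $Z$ you find an isolated point of $Z$ region by region, using Lemma \ref{LIso} to get open singletons in $X \setminus Y$, using scatteredness of $\langle X_n, \tau_n^0 \rangle$ together with the openness of $Y_n \setminus \{x_n\}$ (from Fact \ref{FactSc}) to transfer a $\tau_n^0$-isolating neighborhood to a $\tau^0$-open set via conditions (i) and (ii) of Definition \ref{UB}, and disposing of $Z = \{x_\ast\}$ by the trivial observation that a singleton never lies in its own derived set. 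All three cases check out — in particular your key step $W \cap Z = W \cap Z' \subseteq \{y\}$ is valid because $W \subseteq Y_n \setminus \{x_n\}$, so you do not even need the reduction to $Z \subseteq Y$ that you announce. The trade-off: the paper's argument is two lines long because it recycles machinery already built for the compactness theorem, whereas yours is self-contained and needs no valuations or truth lemmas at all, showing that scatteredness of the ultrabouquet is a purely topological fact independent of the logical apparatus; the mild cost is redoing by hand the case analysis that Lemmas \ref{preservation1} and \ref{deadend} package for free. One cosmetic point: your exhaustiveness claim should be stated as ``$Z$ meets $X \setminus Y$, or $Z$ meets some $Y_n \setminus \{x_n\}$, or $Z = \{x_\ast\}$,'' which is exactly what your three paragraphs cover.
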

\begin{proof}
Since each space $\langle X_n, \tau^0_n, \tau^1_n \rangle$ is scattered, $\varphi : \equiv \Box(\Box p \to p) \to \Box p$ is valid in $\langle X_n, \tau^0_n, \tau^1_n \rangle$ by Fact \ref{SimEsa}. 
Let $v'$ be any valuation on $\mathfrak{X}$. 
By Lemma \ref{preservation1}, for all $y \in Y$, $y \in v'(\varphi)$. 
Moreover, by Lemma \ref{deadend}, for all $x \in X \setminus Y$, $x \in cd_{\tau^0}(v'(p))$, that is, $x \in v'(\Box p)$. 
Hence $x \in v'(\varphi)$. 
Thus $\varphi$ is valid in $\mathfrak{X}$, and hence $\GL \subseteq \Log(\mathfrak{X})$. 
We conclude that $\langle X, \tau^0 \rangle$ is scattered. 
\end{proof}

The following lemma is a version of a part of Makinson's theorem (See Makinson \cite{Mak71}). 
Our proof is a modification of that in Hughes and Cresswell \cite[Lemma 3.2]{HC96}). 

\begin{lem}\label{preservation2}
Let $L$ be any consistent normal extension of $\CL$ and $\varphi$ be any $\mathcal{L}(\Box, \rhd)$-formula. 
If $\varphi \in L$, then $\Box \bot \to \varphi \in \CL$. 
\end{lem}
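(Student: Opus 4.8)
The plan is to adapt Makinson's argument in the syntactic form of Hughes and Cresswell, by collapsing all modalities at a dead end and then invoking the fact that over $\GL$ the formula $\neg \Box \bot$ is already inconsistent. Concretely, I would introduce a \emph{dead-end translation} $\varphi \mapsto \varphi^\circ$ that commutes with the Boolean connectives and sets $(\Box \psi)^\circ := \top$, $(\Diamond \psi)^\circ := \bot$ and $(\psi \rhd \chi)^\circ := \top$, so that $\varphi^\circ$ is a Boolean combination of propositional variables and the constants. The first step is to prove, by induction on $\varphi$, the collapse lemma
\[
	\vdash_\CL \Box \bot \to (\varphi \leftrightarrow \varphi^\circ).
\]
The Boolean cases are immediate. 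For $\Box \psi$ one uses $\vdash \bot \to \psi$, Necessitation and the normality axiom to obtain $\vdash_\CL \Box \bot \to \Box \psi$; the $\Diamond$ case follows via the duality $\Diamond \psi \leftrightarrow \neg \Box \neg \psi$; and the $\rhd$ case is precisely where the conservativity axioms enter, since $\J{1}$ gives $\vdash_\CL \Box(\psi \to \chi) \to (\psi \rhd \chi)$ while $\Box \bot \to \Box(\psi \to \chi)$ as before, hence $\vdash_\CL \Box \bot \to (\psi \rhd \chi)$. Note that for the three modal cases the induction hypothesis is not even needed, as the translation ignores the inner structure.

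Granting the collapse lemma, it suffices to show that $\varphi \in L$ implies that $\varphi^\circ$ is a tautology, hence $\varphi^\circ \in \CL$: for then $\vdash_\CL \Box \bot \to \varphi^\circ$, and composing with the collapse lemma yields $\Box \bot \to \varphi \in \CL$. I would argue by contradiction. If some truth assignment $\alpha$ falsifies $\varphi^\circ$, let $\sigma_\alpha$ be the substitution sending each variable to $\top$ or $\bot$ according to $\alpha$, and set $\chi := \sigma_\alpha(\varphi)$. Since $L$ is closed under Substitution, $\chi \in L$; and since $(\cdot)^\circ$ commutes with $\sigma_\alpha$ (both fix the constants), the variable-free formula $\chi^\circ = \sigma_\alpha(\varphi^\circ)$ evaluates to false, so $\vdash_\CL \neg \chi^\circ$. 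Applying the collapse lemma to $\chi$ then gives $\vdash_\CL \Box \bot \to \neg \chi$, whence $L \vdash \neg \Box \bot$ because $\chi \in L$ and $\CL \subseteq L$.

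The final step, which I expect to be the real crux, is that $L \vdash \neg \Box \bot$ forces $L$ to be inconsistent, contradicting the hypothesis. This is exactly where converse well-foundedness (Löb's axiom) is indispensable: instantiating $\Box(\Box p \to p) \to \Box p$ at $p := \bot$ gives $\vdash_\GL \Box \neg \Box \bot \to \Box \bot$, so from $L \vdash \neg \Box \bot$ Necessitation yields $L \vdash \Box \neg \Box \bot$ and hence $L \vdash \Box \bot$, contradicting $L \vdash \neg \Box \bot$. Therefore $\varphi^\circ$ must be a tautology, and the lemma follows. The only genuinely delicate points are the commutation of $(\cdot)^\circ$ with $\sigma_\alpha$ and the routine bookkeeping in the induction; the conceptual weight is carried by the $\J{1}$-case of the collapse lemma (the one ingredient beyond the purely modal Hughes--Cresswell argument) together with the Löb-based consistency argument.
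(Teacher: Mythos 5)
Your proof is correct, and it reaches the lemma by a genuinely different decomposition than the paper's. The paper stays close to the Hughes--Cresswell original: it first moves in the \emph{opposite} direction of your collapse, rewriting $\Box \psi$ as $(\neg \psi) \rhd \bot$ (a step that uses $\J{4}$ in addition to $\J{1}$) so that $\varphi$ may be assumed $\Box$- and $\Diamond$-free, then puts $\varphi$ into conjunctive normal form and analyzes a conjunct $\varphi_i \in L$ with $\Box \bot \to \varphi_i \notin \CL$: positive disjuncts $\psi \rhd \chi$ are ruled out because $\Box \bot \to (\psi \rhd \chi) \in \CL$, each negative disjunct $\neg(\psi_j \rhd \chi_j)$ implies $\Diamond \top$ in $\CL$, and a falsifying substitution instance of the propositional residue $\gamma$ yields either outright inconsistency (no $\rhd$-disjuncts) or $\Diamond \top \in L$, whence $\Box \Diamond \top \in L$ and, by L\"ob's axiom at $p := \bot$, also $\Box \bot \in L$ --- contradiction. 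Your dead-end translation absorbs all of this bookkeeping into the single collapse lemma $\vdash_\CL \Box \bot \to (\varphi \leftrightarrow \varphi^\circ)$, after which one substitution into constants and the very same L\"ob instantiation ($\Box \neg \Box \bot \to \Box \bot$) finish the job; your clause $(\psi \rhd \chi)^\circ := \top$ is exactly the contrapositive of the paper's ``$\neg(\psi_j \rhd \chi_j)$ implies $\Diamond \top$'' step, so the three essential ingredients ($\Box \bot \to \Box \psi$ from K and Necessitation, $\Box \bot \to (\psi \rhd \chi)$ from $\J{1}$, and L\"ob at $\bot$) coincide. What your route buys: no CNF manipulation, no case split on the number of $\rhd$-disjuncts, and among the $\rhd$-axioms only $\J{1}$ is invoked (the paper's elimination of $\Box$ via $(\neg\psi) \rhd \bot$ needs $\J{4}$, though of course $\J{4} \in \CL$ either way); moreover your collapse lemma is a reusable normal-form fact in its own right, stating that under $\Box \bot$ every $\mathcal{L}(\Box, \rhd)$-formula is $\CL$-provably propositional. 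What the paper's route buys is mainly fidelity to the cited Makinson/Hughes--Cresswell argument. One presentational point: your $\Diamond$-case uses the duality $\Diamond \psi \leftrightarrow \neg \Box \neg \psi$, which the paper's definition of normal logics does not state explicitly but tacitly assumes as well (its own proof also eliminates $\Diamond$), so this is consistent with the paper's conventions rather than a gap.
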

\begin{proof}
Let $L$ be a normal extension of $\CL$ and suppose that there exists an $\mathcal{L}(\Box, \rhd)$-formula $\varphi$ such that $\varphi \in L$ but $\Box \bot \to \varphi \notin \CL$. 
We would like to show that $L$ is inconsistent. 
From axioms $\J{1}$ and $\J{4}$, we have that for any $\mathcal{L}(\Box, \rhd)$-formula $\psi$, $\Box \psi$ is equivalent to $(\neg \psi) \rhd \bot$ in $\CL$. 
So we may assume that neither $\Box$ nor $\Diamond$ occurs in $\varphi$. 
Also we assume that $\varphi$ is in a conjunctive normal form $\varphi_0 \land \varphi_1 \land \cdots \land \varphi_k$ where each $\varphi_i$ is a disjunction of formulas, and each disjunct of $\varphi_i$ is either a formula without $\rhd$, or a formula of the form $\psi \rhd \chi$, or a formula of the form $\neg (\psi \rhd \chi)$. 

By the choice of $\varphi$, for some $i \leq k$, $\varphi_i \in L$ and $\Box \bot \to \varphi_i \notin \CL$. 
From $\J{1}$, we have that $\Box \bot \to \psi \rhd \chi \in \CL$. 
Then $\varphi_i$ does not contain a formula of the form $\psi \rhd \chi$ as a disjunct because $\Box \bot \to \varphi_i \notin \CL$. 
Thus, we may assume that $\varphi_i$ is of the form
\[
	\gamma \lor \bigvee_{j = 0}^{m} \neg (\psi_j \rhd \chi_j)
\]
where $\gamma$ is a classical propositional formula. 
Since $\Box \bot \to \varphi_i \notin \CL$, $\gamma$ is not a tautology of the classical propositional logic.
Then, there exists a substitution instance $\gamma'$ of $\gamma$ such that $\neg \gamma'$ is a tautology (cf.~\cite[p.~47]{HC96}). 
So $\neg \gamma' \in L$. 

Suppose $m = 0$. 
Then $L$ contains both $\gamma'$ and $\neg \gamma'$, and hence is inconsistent.

Suppose $m > 0$. 
Since each $\neg(\psi_j \rhd \chi_j)$ implies $\Diamond \top$ in $\CL$, $L$ contains $\gamma \lor \Diamond \top$. 
Then $\gamma' \lor \Diamond \top \in L$, and thus $\Diamond \top \in L$. 
Since $L$ is normal, $\Box \Diamond \top \in L$. 
Therefore $\Box \bot \in L$ because $L$ is an extension of $\CL$. 
We conclude that $L$ is inconsistent. 
\end{proof}

\begin{thm}\label{preservation3}
If an $\mathcal{L}(\Box, \rhd)$-formula $\varphi$ is valid in all $\langle X_n, \tau^0_n, \tau^1_n \rangle$, then $\varphi$ is also valid in $\mathfrak{X}$. 
\end{thm}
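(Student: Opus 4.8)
The plan is to reduce the assertion at the ``dead end'' points of $X \setminus Y$ to the already-established validity on $Y$ via the syntactic Lemma \ref{preservation2}. First I would package the hypothesis into a single logic by setting
\[
	L := \bigcap_{n \in \mathbb{N}} \Log(X_n, \tau^0_n, \tau^1_n).
\]
Since each $\langle X_n, \tau^0_n \rangle$ is scattered, each $\Log(X_n, \tau^0_n, \tau^1_n)$ is a normal extension of $\CL$ by Corollary \ref{Sound_CL}, and an intersection of normal extensions of $\CL$ is again one: the inclusion $\CL \subseteq L$ and closure under Modus Ponens, Necessitation and Substitution are all inherited componentwise. Moreover $L$ is consistent, because $\bot \notin \Log(X_0, \tau^0_0, \tau^1_0)$ as $v(\bot) = \varnothing \neq X_0$. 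By hypothesis $\varphi$ is valid in every $\langle X_n, \tau^0_n, \tau^1_n \rangle$, which is precisely the statement $\varphi \in L$.

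Next I would apply Lemma \ref{preservation2} to this consistent normal extension $L$ to obtain $\Box \bot \to \varphi \in \CL$. By Lemma \ref{bitop_sc} the ultrabouquet $\mathfrak{X}$ is a $\CL$-space, so $\Box \bot \to \varphi$ is valid in $\mathfrak{X}$. Now fix an arbitrary valuation $v'$ on $\mathfrak{X}$ and an arbitrary $x \in X$; the goal is $x \in v'(\varphi)$, and I would argue this by splitting on whether $x$ lies in $Y$. If $x \in Y$, then Lemma \ref{preservation1} directly gives $x \in v'(\varphi)$. If instead $x \in X \setminus Y$, then Lemma \ref{deadend} (with $Z = \varnothing$) yields $x \in cd_{\tau^0}(\varnothing) = v'(\Box \bot)$; since $\Box \bot \to \varphi$ is valid we have $x \in v'(\Box \bot \to \varphi) = \overline{v'(\Box \bot)} \cup v'(\varphi)$, and as $x \in v'(\Box \bot)$ it follows that $x \in v'(\varphi)$. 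Because $v'$ and $x$ were arbitrary, $\varphi$ is valid in $\mathfrak{X}$.

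The step I expect to be the crux is the very first one: recognizing that ``validity in all of the pieces'' should be recast as membership in the consistent normal extension $L$ of $\CL$, so that the purely syntactic Lemma \ref{preservation2} becomes available. This is exactly what lets information be transferred to the dead-end points of $X \setminus Y$, where the semantic tools alone are insufficient. At such points every $\Diamond$-formula is unsatisfied by Lemma \ref{deadend}, so the validity on $Y$ furnished by Lemma \ref{preservation1} cannot simply be propagated outward; one genuinely needs the fact that $\Box \bot \to \varphi$ is a \emph{theorem} of $\CL$, which forces $\varphi$ to hold wherever $\Box \bot$ holds.
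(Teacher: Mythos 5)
Your proposal is correct and follows essentially the same route as the paper's proof: obtain $\Box\bot \to \varphi \in \CL$ via Lemma \ref{preservation2}, then split $X$ into $Y$ (handled by Lemma \ref{preservation1}) and $X \setminus Y$ (handled by Lemma \ref{deadend} together with the fact that $\mathfrak{X}$ is a $\CL$-space by Lemma \ref{bitop_sc}). The only difference is cosmetic: where you form the intersection $L = \bigcap_{n} \Log(X_n, \tau^0_n, \tau^1_n)$, the paper simply applies Lemma \ref{preservation2} to the single consistent normal extension $\Log(X_0, \tau^0_0, \tau^1_0)$, which already contains $\varphi$.
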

\begin{proof}
Since $\langle X_0, \tau^0_0 \rangle$ is scattered, $\Log(X_0, \tau^0_0, \tau^1_0)$ is a consistent normal extension of $\CL$ by Corollary \ref{Sound_CL}. 
Since $\varphi \in \Log(X_0, \tau^0_0, \tau^1_0)$, we obtain $\Box \bot \to \varphi \in \CL$ by Lemma \ref{preservation2}. 

Let $v'$ be any valuation on $\mathfrak{X}$, then for all $y \in Y$, $y \in v'(\varphi)$ by Lemma \ref{preservation1}. 
Also, for all $x \in X \setminus Y$, $x \in v'(\Box \bot)$ by Lemma \ref{deadend}. 
Since $\mathfrak{X}$ is a $\CL$-space by Lemma \ref{bitop_sc}, it follows from $\Box \bot \to \varphi \in \CL$ that $x \in v'(\varphi)$. 
Therefore $\varphi$ is valid in $\mathfrak{X}$. 
\end{proof}

\subsection{Proofs of the theorems}\label{proofs_CL}

We are ready to prove the topological compactness theorem. 

\begin{thm}[Topological compactness theorem]\label{CompCL}
Let $L$ be a consistent normal extension of $\CL$, $\Gamma$ be a set of $\mathcal{L}(\Box, \rhd)$-formulas and $\varphi$ be an $\mathcal{L}(\Box, \rhd)$-formula. 
If $\Gamma \models_L^{T} \varphi$, then $\Gamma_0 \models_L^{T} \varphi$ for some finite subset $\Gamma_0$ of $\Gamma$.
\end{thm}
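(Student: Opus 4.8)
The plan is to prove the contrapositive, using the ultrabouquet construction developed in this subsection as the central tool. Suppose that for every finite subset $\Gamma_0$ of $\Gamma$ we have $\Gamma_0 \nmodels_L^T \varphi$; I must then produce a single $L$-space, a valuation, and a point witnessing $\Gamma \nmodels_L^T \varphi$. Since $\Gamma$ is a set of $\mathcal{L}(\Box,\rhd)$-formulas, it is countable, so I can enumerate its finite subsets as $\{\Gamma_n\}_{n \in \mathbb{N}}$ in such a way that every finite subset appears and the sequence is increasing. For each $n$, the failure $\Gamma_n \nmodels_L^T \varphi$ furnishes an $L$-space $\langle X_n, \tau_n^0, \tau_n^1 \rangle$, a valuation $v_n$, and a point $x_n \in X_n$ such that $x_n \in v_n(\psi)$ for every $\psi \in \Gamma_n$ but $x_n \notin v_n(\varphi)$. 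Because $L$ is a normal extension of $\CL$, Corollary \ref{Sound_CL} guarantees each $\langle X_n, \tau_n^0 \rangle$ is scattered, so the standing hypotheses of the ultrabouquet construction are met (after taking disjoint copies so the family $\{X_n\}$ is pairwise disjoint).

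Next I would form the ultrabouquet $\mathfrak{X} = \langle X, \tau^0, \tau^1 \rangle$ from the family $\{\langle X_n, \tau_n^0, \tau_n^1 \rangle\}_{n}$ and the chosen points $\{x_n\}$, using a non-principal ultrafilter $\mathcal{U}$ on $\mathbb{N}$, and let $v$ be the valuation on $\mathfrak{X}$ induced by the $v_n$ via Definition \ref{bouquetval}. The distinguished point $x_\ast$ is the witness I aim for. To see $x_\ast \notin v(\varphi)$: since $x_n \notin v_n(\varphi)$ for every $n$, the set $\{n \mid x_n \in v_n(\varphi)\}$ is empty, hence not in $\mathcal{U}$, so $x_\ast \notin v(\varphi)$ by Lemma \ref{TruthL2}. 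To see $x_\ast \in v(\psi)$ for each $\psi \in \Gamma$: fix $\psi$; by the enumeration, $\psi \in \Gamma_n$ for all sufficiently large $n$, say for all $n \geq N$, so $x_n \in v_n(\psi)$ holds for cofinitely many $n$; since $\mathcal{U}$ is non-principal, every cofinite set lies in $\mathcal{U}$, whence $\{n \mid x_n \in v_n(\psi)\} \in \mathcal{U}$, and Lemma \ref{TruthL2} gives $x_\ast \in v(\psi)$.

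It remains to check that $\mathfrak{X}$ is itself an $L$-space, i.e.\ $L \subseteq \Log(\mathfrak{X})$; this is exactly where Theorem \ref{preservation3} does the heavy lifting. Every axiom and every theorem of $L$ is valid in each $\langle X_n, \tau_n^0, \tau_n^1 \rangle$ (as these are $L$-spaces), so by Theorem \ref{preservation3} each such formula is valid in $\mathfrak{X}$; since $\Log(\mathfrak{X})$ is closed under the rules of $\CL$ and contains all of $L$'s theorems, we get $L \subseteq \Log(\mathfrak{X})$. Thus $\mathfrak{X}$ is a genuine $L$-space carrying a valuation and a point $x_\ast$ satisfying all of $\Gamma$ but not $\varphi$, which is precisely $\Gamma \nmodels_L^T \varphi$, completing the contrapositive.

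The main obstacle, and the reason the whole subsection was needed, is verifying that $\mathfrak{X}$ is an $L$-space rather than merely a $\CL$-space. The naive preservation result (Lemma \ref{preservation1}) only controls the behaviour of valuations on the subspace $Y$, and Lemma \ref{deadend} shows the points of $X \setminus Y$ behave as dead ends where $\Diamond$-formulas collapse, so validity on the $X_n$ does not obviously transfer to all of $\mathfrak{X}$. The passage from $Y$ to all of $X$ relies on the syntactic reduction of Lemma \ref{preservation2} (a Makinson-style argument reducing an arbitrary theorem $\varphi$ to $\Box\bot \to \varphi \in \CL$), together with scatteredness of the ultrabouquet (Lemma \ref{bitop_sc}); these combine in Theorem \ref{preservation3}. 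Consequently, once that theorem is in hand, the compactness proof itself is a clean application of it plus the two truth lemmas and the ultrafilter properties, with essentially no further calculation required.
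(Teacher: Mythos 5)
Your proposal is correct and follows essentially the same route as the paper's own proof: the paper enumerates $\Gamma = \{\psi_n\}_{n \in \mathbb{N}}$ and uses the conjunctions $\chi_n = \bigwedge_{i \leq n}\psi_i$ where you use an increasing exhaustion of $\Gamma$ by finite subsets, an immaterial difference, and both arguments then combine Corollary \ref{Sound_CL}, the ultrabouquet with a non-principal ultrafilter, Theorem \ref{preservation3} to see $\mathfrak{X}$ is an $L$-space, and Lemma \ref{TruthL2} at $x_\ast$ in exactly the same way. One cosmetic slip: an increasing sequence of finite subsets cannot literally contain \emph{every} finite subset of $\Gamma$ (two incomparable singletons already prevent this), but all your argument actually uses is that each $\psi \in \Gamma$ belongs to $\Gamma_n$ for all sufficiently large $n$, which the standard exhaustion $\Gamma_n = \{\psi_0,\ldots,\psi_n\}$ provides.
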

\begin{proof}
Suppose that for all finite subsets $\Gamma_0$ of $\Gamma$, $\Gamma_0 \not\models_L^T \varphi$. 
Let $\{ \psi_n \}_{n \in \mathbb{N}}$ be an enumeration of elements of $\Gamma$, and let $\chi_n := \bigwedge_{i=0}^n \psi_n$. 
Then, for each $n \in \mathbb{N}$, $\{\chi_n\} \not \models_L^T \varphi$. 
Hence there exist an $L$-space $\langle X_n, \tau_n^0, \tau_n^1 \rangle$, a valuation $v_n$ on the space and $x_n \in X_n$ such that $x_n \in v_n (\chi_n)$ and $x_n \not\in v_n (\varphi)$. 
By Corollary \ref{Sound_CL}, $\langle X_n, \tau_n^0 \rangle$ is scattered. 
Also we may assume that the family $\{X_n\}_{n \in \mathbb{N}}$ is pairwise disjoint. 
%Then by Fact \ref{FactSc}, for each $n$ there exists $Y_n$ such that $Y_n$ is a $\tau_n^0$-neighborhood of $x_n$ and $Y_n \setminus \{ x_n \} \in \tau_n^0$.
Let $\mathfrak{X}$ be an ultrabouquet based on the families $\{ \langle X_n, \tau_n^0, \tau_n^1 \rangle \}_{n\in\mathbb{N}}$ and $\{x_n\}_{n \in \mathbb{N}}$. 
Since every $\varphi \in L$ is valid in all $\langle X_n, \tau^0_n, \tau^1_n \rangle$, by Lemma \ref{preservation3}, $\varphi$ is also valid in $\mathfrak{X}$. 
Therefore $\mathfrak{X}$ is also an $L$-space. 

Let $v$ be the valuation on $\mathfrak{X}$ defined from $\{v_n\}_{n \in \mathbb{N}}$ in Definition \ref{bouquetval}. 
We claim that for every $\psi_i \in \Gamma$, $x_\ast \in v(\psi_i)$. 
Indeed, for any $n \geq i$, $x_n \in v_n(\psi_i)$. 
Then the set $\{ n \in \mathbb{N} \mid x_n \in v_n(\psi_i)\}$ is cofinite, and hence in $\mathcal{U}$ because $\mathcal{U}$ is a non-principal ultrafilter. 
By Lemma \ref{TruthL2}, $x_\ast \in v(\psi_i)$. 

On the other hand, $\{ n \in \mathbb{N} \mid x_n \in v_n(\varphi) \} = \varnothing \not\in \mathcal{U}$. Again by Lemma \ref{TruthL2}, $x_\ast \not\in v(\varphi)$. Thus we conclude $\Gamma \not\models_L^T \varphi$.
\end{proof}

\begin{thm}\label{equivTC}
For any normal extension $L$ of $\CL$, $L$ is topologically complete if and only if $L$ is topologically strongly complete. 
\end{thm}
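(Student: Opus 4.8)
The backward implication is immediate: if $L$ is topologically strongly complete, then specializing the defining condition to $\Gamma = \varnothing$ yields topological completeness. So the plan is to concentrate on the forward implication, deriving topological strong completeness from topological completeness. Assume $L$ is topologically complete and suppose $\Gamma \models_L^T \varphi$; the goal is to show $\Gamma \vdash_L \varphi$.

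First I would dispose of the degenerate case in which $L$ is inconsistent. If $\bot \in L$, then no bitopological space is an $L$-space, since $v(\bot) = \varnothing \neq X$ forces $\bot \notin \Log(X, \tau^0, \tau^1)$ for every space; hence $\Gamma \models_L^T \varphi$ holds vacuously, while $\Gamma \vdash_L \varphi$ holds because $L$ contains every formula. Thus the equivalence is trivial in this case, and I may henceforth assume $L$ is consistent, which is precisely the hypothesis needed to invoke the topological compactness theorem.

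For consistent $L$, the engine of the argument is Theorem \ref{CompCL}: from $\Gamma \models_L^T \varphi$ it produces a finite subset $\Gamma_0 \subseteq \Gamma$ with $\Gamma_0 \models_L^T \varphi$. The remaining step is a semantic deduction equivalence for finite premise sets, namely
\[
	\Gamma_0 \models_L^T \varphi \iff \varnothing \models_L^T \Big( \bigwedge \Gamma_0 \to \varphi \Big).
\]
This follows by unwinding the definition of $\models_L^T$ together with the valuation clauses for $\land$ and $\to$: in any $L$-space with valuation $v$ one has $v(\bigwedge \Gamma_0) = \bigcap_{\psi \in \Gamma_0} v(\psi)$, so the premise ``$x \in v(\psi)$ for all $\psi \in \Gamma_0$'' is simply $x \in v(\bigwedge \Gamma_0)$, and both sides of the displayed equivalence reduce to the single condition that $v(\bigwedge \Gamma_0) \subseteq v(\varphi)$ for every $L$-space and every valuation $v$ (using $v(\bigwedge \Gamma_0 \to \varphi) = X \iff v(\bigwedge \Gamma_0) \subseteq v(\varphi)$).

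Applying topological completeness of $L$ to the single formula $\bigwedge \Gamma_0 \to \varphi$ then gives $\varnothing \vdash_L (\bigwedge \Gamma_0 \to \varphi)$, that is $\bigwedge \Gamma_0 \to \varphi \in L$; by definition this is precisely $\Gamma_0 \vdash_L \varphi$, and since $\Gamma_0 \subseteq \Gamma$ we conclude $\Gamma \vdash_L \varphi$, as desired. I expect no serious obstacle here: all the genuine work has already been carried out in the topological compactness theorem, and the only point demanding care is the consistency hypothesis of that theorem, which is why the inconsistent case must be separated off first; the deduction equivalence itself is routine bookkeeping with the valuation clauses.
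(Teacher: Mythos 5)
Your proof is correct and takes essentially the same route as the paper's: apply the topological compactness theorem (Theorem~\ref{CompCL}) to reduce $\Gamma$ to a finite $\Gamma_0$, pass to $\varnothing \models_L^T \bigwedge \Gamma_0 \to \varphi$ via the valuation clauses, and invoke topological completeness. Your explicit treatment of the inconsistent case is a careful touch the paper leaves implicit (its appeal to Theorem~\ref{CompCL} presupposes consistency, and the equivalence is indeed trivial when $\bot \in L$), but it does not alter the substance of the argument.
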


\begin{proof}
It suffices to prove the implication $(\Rightarrow)$. 
Suppose $\Gamma \models_L^{T} \varphi$. 
By the topological compactness theorem, $\Gamma_0 \models_L^T \varphi$ for some finite subset $\Gamma_0$ of $\Gamma$, and we have $\varnothing \models_L^{T} \bigwedge \Gamma_0 \to \varphi$. 
By the topological completeness of $L$, $\varnothing \vdash_L \bigwedge \Gamma_0 \to \varphi$. 
Thus $\Gamma \vdash_L \varphi$.
\end{proof}

From Theorems \ref{wTCCL} and \ref{equivTC}, we obtain the following topological strong completeness theorem. 

\begin{thm}[Topological strong completeness theorem of some extensions of $\CL$]\label{sTCCL}
The logics $\CL$, $\CLM$, $\IL$, $\ILM$, $\ILP$ and $\ILW$ are topologically strongly complete. 
\qed
\end{thm}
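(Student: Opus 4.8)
The plan is to derive this statement directly from the two results already in hand, Theorem \ref{wTCCL} and Theorem \ref{equivTC}, with essentially no new work. The key observation is that each of $\CL$, $\CLM$, $\IL$, $\ILM$, $\ILP$ and $\ILW$ is by construction a normal extension of $\CL$, so Theorem \ref{equivTC} applies to every one of them. That theorem tells us that, within the class of normal extensions of $\CL$, topological completeness and topological strong completeness are equivalent notions. Therefore the task reduces entirely to citing the topological completeness already established.

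First I would invoke Theorem \ref{wTCCL}, which states precisely that the six logics $\CL$, $\CLM$, $\IL$, $\ILM$, $\ILP$ and $\ILW$ are topologically complete. Then, for each such logic $L$, I would apply the implication $(\Rightarrow)$ of Theorem \ref{equivTC}: since $L$ is a normal extension of $\CL$ and is topologically complete, it is topologically strongly complete. Running this argument over all six logics yields the claim. Concretely, for a fixed $L$ in the list and any $\Gamma$ and $\varphi$ with $\Gamma \models_L^T \varphi$, the compactness argument underlying Theorem \ref{equivTC} produces a finite $\Gamma_0 \subseteq \Gamma$ with $\varnothing \models_L^T \bigwedge \Gamma_0 \to \varphi$, whence $\varnothing \vdash_L \bigwedge \Gamma_0 \to \varphi$ by completeness, and so $\Gamma \vdash_L \varphi$.

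The only point deserving a moment's care is that the machinery behind Theorem \ref{equivTC} rests on the topological compactness theorem (Theorem \ref{CompCL}), which is stated for \emph{consistent} normal extensions of $\CL$. Thus I would note explicitly that each of the six logics is consistent: this is guaranteed by the Visser-completeness results recorded in Facts \ref{ComplCL} and \ref{ComplIL}, since each logic is sound and complete with respect to a nonempty class of Visser models and hence does not prove $\bot$. With consistency in place, Theorem \ref{equivTC} applies without reservation.

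I do not expect any genuine obstacle here, as the heavy lifting — the ultrabouquet construction and the resulting compactness theorem — has already been carried out in Section \ref{MT}, and the completeness inputs are supplied by Theorem \ref{wTCCL}. The proof is a clean combination of the two, which is why the statement is marked with \qed rather than accompanied by a separate argument.
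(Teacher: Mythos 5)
Your proposal is correct and matches the paper's own proof exactly: Theorem \ref{sTCCL} is obtained by combining Theorem \ref{wTCCL} with Theorem \ref{equivTC}, which is why the paper states it with no separate argument. Your extra remark about consistency is a reasonable precaution (Theorem \ref{equivTC} invokes Theorem \ref{CompCL}, stated only for consistent extensions, and the paper glosses over this), though for an inconsistent $L$ strong completeness holds trivially, so no content is at stake.
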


\section{Topological investigations of $\IL$}\label{IL}

In this section, we investigate topological aspects of $\IL$. 
First, we investigate necessary and sufficient conditions for a $\CL$-space to be an $\IL$-space. 
Secondly, we explore Alexandroff $\IL$-spaces. 

\begin{thm}\label{IL_Char}
Let $\langle X, \tau^0, \tau^1 \rangle$ be a $\CL$-space. 
Then the following are equivalent:
\begin{enumerate}
	\item $\langle X, \tau^0, \tau^1 \rangle$ is an $\IL$-space. 
	\item For any $U \in \tau^1$ and $Y \subseteq X$, $d_{\tau^0}(d_{\tau^0}(Y) \cap U) \subseteq d_{\tau^0}(Y \cap U)$. 
	\item For any $U \in \tau^1$, $d_{\tau^0}(d_{\tau^0}(\overline{U}) \cap U) = \varnothing$. 
	\item For any $U \in \tau^1$, there exists $V \in \tau^0$ such that $V \subseteq U$ and $d_{\tau^0}(U \setminus V) = \varnothing$. 
\end{enumerate}
\end{thm}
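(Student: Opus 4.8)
The plan is to reduce the semantic condition (1) to the purely topological inclusion (2), and then to relate (2), (3) and (4) by elementary manipulations of the derived-set operator $d_{\tau^0}$, using throughout that a $\CL$-space is $\tau^0$-scattered by Corollary \ref{Sound_CL}.

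First I would establish (1) $\iff$ (2) by unwinding the validity of the axiom $\J{5}$. For any valuation $v$ we have $v(\Diamond p \rhd p) = e_{\tau^0, \tau^1}(d_{\tau^0}(v(p)), v(p))$, and by the definition of $e_{\tau^0, \tau^1}$ this equals $X$ precisely when, for every $U \in \tau^1$, $d_{\tau^0}(d_{\tau^0}(v(p)) \cap U) \subseteq d_{\tau^0}(v(p) \cap U)$. Since $v(p)$ ranges over all subsets $Y \subseteq X$ as $v$ varies, $\J{5}$ is valid if and only if (2) holds.

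Next, for (2) $\iff$ (3): the forward direction is immediate by instantiating $Y := \overline U$ in (2), since then $d_{\tau^0}(Y \cap U) = d_{\tau^0}(\varnothing) = \varnothing$ by Fact \ref{dset}. For the converse I would decompose an arbitrary $Y$ as $(Y \cap U) \cup (Y \setminus U)$ and distribute $d_{\tau^0}$ over the union (Fact \ref{dset}) to get $d_{\tau^0}(Y) \cap U = (d_{\tau^0}(Y \cap U) \cap U) \cup (d_{\tau^0}(Y \setminus U) \cap U)$. Applying $d_{\tau^0}$ and distributing once more, the first summand is controlled by $\tau^0$-scatteredness via $d_{\tau^0}(d_{\tau^0}(Y \cap U) \cap U) \subseteq d_{\tau^0}(d_{\tau^0}(Y \cap U)) \subseteq d_{\tau^0}(Y \cap U)$ (Fact \ref{dset}), while the second is annihilated by (3): from $Y \setminus U \subseteq \overline U$ we get $d_{\tau^0}(Y \setminus U) \cap U \subseteq d_{\tau^0}(\overline U) \cap U$, hence $d_{\tau^0}(d_{\tau^0}(Y \setminus U) \cap U) \subseteq d_{\tau^0}(d_{\tau^0}(\overline U) \cap U) = \varnothing$. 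Together these yield (2).

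Finally, for (3) $\iff$ (4): the implication (4) $\Rightarrow$ (3) is short. Given $V \in \tau^0$ with $V \subseteq U$, openness gives $d_{\tau^0}(\overline V) \cap V = \varnothing$ (Fact \ref{dset}), and $\overline U \subseteq \overline V$ yields $d_{\tau^0}(\overline U) \cap V = \varnothing$, so $d_{\tau^0}(\overline U) \cap U \subseteq U \setminus V$; monotonicity of $d_{\tau^0}$ then gives $d_{\tau^0}(d_{\tau^0}(\overline U) \cap U) \subseteq d_{\tau^0}(U \setminus V) = \varnothing$. The implication (3) $\Rightarrow$ (4) is the heart of the argument, and I would take $V := U \setminus d_{\tau^0}(\overline U)$. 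Then $U \setminus V = d_{\tau^0}(\overline U) \cap U$, so $d_{\tau^0}(U \setminus V) = \varnothing$ by (3). The main obstacle is verifying $V \in \tau^0$, which is delicate precisely because $U$ is only $\tau^1$-open; here scatteredness is indispensable. It makes $d_{\tau^0}(\overline U)$ a $\tau^0$-closed set, since a set $A$ is $\tau^0$-closed iff $d_{\tau^0}(A) \subseteq A$ (Fact \ref{dset}) and $d_{\tau^0}(d_{\tau^0}(\overline U)) \subseteq d_{\tau^0}(\overline U)$ by Fact \ref{dset}, whence $\overline{d_{\tau^0}(\overline U)} \in \tau^0$. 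For $x \in V$ one has $x \in U$ and $x \notin d_{\tau^0}(\overline U)$; the latter supplies a $\tau^0$-neighborhood $W$ of $x$ with $W \cap \overline U \subseteq \{x\}$, and since $x \in U$ this forces $W \subseteq U$. Then $W \cap \overline{d_{\tau^0}(\overline U)}$ is a $\tau^0$-open neighborhood of $x$ contained in $V$, so $V \in \tau^0$, completing the cycle and hence the equivalence of all four conditions.
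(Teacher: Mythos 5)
Your proposal is correct and follows essentially the same route as the paper: the same unwinding of $\J{5}$ for $(1)\Leftrightarrow(2)$, the same decomposition $Y = (Y\cap U)\cup(Y\setminus U)$ with scatteredness for $(3)\Rightarrow(2)$, and the same witness $V := U\setminus d_{\tau^0}(\overline{U})$ for $(3)\Rightarrow(4)$. The only local difference is in verifying $V\in\tau^0$: you invoke scatteredness (via Fact \ref{dset}.5) to make $d_{\tau^0}(\overline{U})$ $\tau^0$-closed, whereas the paper gets a second neighborhood directly from the emptiness asserted in $(3)$ and intersects the two, so scatteredness is not actually indispensable there as you claim --- but your version is equally valid.
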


\begin{proof}
$(1 \Leftrightarrow 2)$: 
Notice that a $\CL$-space $\langle X, \tau^0, \tau^1 \rangle$ is an $\IL$-space if and only if $\Diamond p \rhd p$ is valid in $\langle X, \tau^0, \tau^1 \rangle$. 
The latter condition is equivalent to the condition that for all $Y \subseteq X$, $e_{\tau^0, \tau^1}(d_{\tau^0}(Y), Y) = X$. 
Then it follows from Definition \ref{ope} that this is equivalent to Clause 2. 

$(2 \Rightarrow 3)$: 
Let $U \in \tau^1$. 
From Clause 2 for $Y = \overline{U}$, we have $d_{\tau^0}(d_{\tau^0}(\overline{U}) \cap U) \subseteq d_{\tau^0}(\overline{U} \cap U) = d_{\tau^0}(\varnothing)$. 
Since $d_{\tau^0}(\varnothing) = \varnothing$ by Fact \ref{dset}.1, we obtain $d_{\tau^0}(d_{\tau^0}(\overline{U}) \cap U) = \varnothing$. 

$(3 \Rightarrow 2)$: 
Let $U \in \tau^1$ and $Y \subseteq X$. 
Since $Y \setminus U \subseteq \overline{U}$, by Fact \ref{dset}.2, $d_{\tau^0}(Y \setminus U) \cap U \subseteq d_{\tau^0}(\overline{U}) \cap U$. 
Then $d_{\tau^0}(d_{\tau^0}(Y \setminus U) \cap U) \subseteq d_{\tau^0}(d_{\tau^0}(\overline{U}) \cap U) = \varnothing$. 
We get $d_{\tau^0}(d_{\tau^0}(Y \setminus U) \cap U) = \varnothing$.

Since $Y = (Y \cap U) \cup (Y \setminus U)$, by Fact \ref{dset},
\begin{align*}
	d_{\tau^0}(d_{\tau^0}(Y) \cap U) & = d_{\tau^0}(d_{\tau^0}(Y \cap U) \cap U) \cup d_{\tau^0}(d_{\tau^0}(Y \setminus U) \cap U), \\
	& = d_{\tau^0}(d_{\tau^0}(Y \cap U) \cap U), \\
	& \subseteq d_{\tau^0}(d_{\tau^0}(Y \cap U)), \\
	& \subseteq d_{\tau^0}(Y \cap U). 
\end{align*}

$(3 \Rightarrow 4)$: 
Let $U \in \tau^1$. 
Let $V$ denote the set $U \setminus d_{\tau^0}(\overline{U})$. 
Then $V \subseteq U$ and $d_{\tau^0}(U \setminus V) = d_{\tau^0}(d_{\tau^0}(\overline{U}) \cap U) = \varnothing$. 
So it suffices to show that $V$ is an element of $\tau^0$. 
Let $x$ be an arbitrary element of $V$. 
Since $x \notin d_{\tau^0}(d_{\tau^0}(\overline{U}) \cap U)$, there exists a $\tau^0$-neighborhood $W_0$ of $x$ such that $W_0 \cap d_{\tau^0}(\overline{U}) \cap U \subseteq \{x\}$. 
Since $x \notin d_{\tau^0}(\overline{U})$, $W_0 \cap d_{\tau^0}(\overline{U}) \cap U = \varnothing$. 
Furthermore, from $x \notin d_{\tau^0}(\overline{U})$, there exists a $\tau^0$-neighborhood $W_1$ of $x$ such that $W_1 \cap \overline{U} \subseteq \{x\}$. 
Since $x \notin \overline{U}$, we also have $W_1 \cap \overline{U} = \varnothing$. 
Equivalently, $W_1 \subseteq U$. 
Then, we have $W_0 \cap W_1 \in \tau^0$, $x \in W_0 \cap W_1$ and $W_0 \cap W_1 \subseteq V$. 
We have shown that an arbitrary element of $V$ has a $\tau^0$-neighborhood which is included in $V$. 
Therefore $V \in \tau^0$. 

$(4 \Rightarrow 3)$: 
Let $U \in \tau^1$, then for some $V \in \tau^0$, $V \subseteq U$ and $d_{\tau^0}(U \setminus V) = \varnothing$. 
Since $\overline{U} \subseteq \overline{V}$ and $V \in \tau^0$, by Fact \ref{dset}, $d_{\tau^0}(\overline{U}) \cap V \subseteq d_{\tau^0}(\overline{V}) \cap V = \varnothing$. 
Then $d_{\tau^0}(\overline{U}) \cap V = \varnothing$ and so $d_{\tau^0}(d_{\tau^0}(\overline{U}) \cap V) = \varnothing$. 

Since $U = V \cup (U \setminus V)$, we obtain
\begin{align*}
	d_{\tau^0}(d_{\tau^0}(\overline{U}) \cap U) & = d_{\tau^0}(d_{\tau^0}(\overline{U}) \cap V) \cup d_{\tau^0}(d_{\tau^0}(\overline{U}) \cap (U \setminus V)), \\
	& = d_{\tau^0}(d_{\tau^0}(\overline{U}) \cap (U \setminus V)), \\
	& \subseteq d_{\tau^0}(U \setminus V). 
\end{align*}
Therefore we conclude $d_{\tau^0}(d_{\tau^0}(\overline{U}) \cap U) = \varnothing$. 
\end{proof}

\begin{cor}\label{IL_subset}
For any $\CL$-space $\langle X, \tau^0, \tau^1 \rangle$, if $\tau^1 \subseteq \tau^0$, then $\langle X, \tau^0, \tau^1 \rangle$ is an $\IL$-space. 
\end{cor}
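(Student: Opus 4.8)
The plan is to reduce the claim to Theorem \ref{IL_Char}, which characterizes $\IL$-spaces among $\CL$-spaces via four equivalent conditions. Since $\langle X, \tau^0, \tau^1 \rangle$ is already assumed to be a $\CL$-space, it suffices to verify any one of Clauses 2, 3 or 4 for this space under the hypothesis $\tau^1 \subseteq \tau^0$. I would verify Clause 3, as it becomes essentially immediate.

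First I would fix an arbitrary $U \in \tau^1$. By the assumption $\tau^1 \subseteq \tau^0$ we have $U \in \tau^0$. Now, since $U \in \tau^0$, Fact \ref{dset}.4 yields $d_{\tau^0}(\overline{U}) \cap U = \varnothing$. Applying $d_{\tau^0}$ and using Fact \ref{dset}.1, we obtain $d_{\tau^0}(d_{\tau^0}(\overline{U}) \cap U) = d_{\tau^0}(\varnothing) = \varnothing$. This establishes Clause 3 for the arbitrary $U \in \tau^1$, so by Theorem \ref{IL_Char} the space is an $\IL$-space.

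An even more direct route uses Clause 4: for $U \in \tau^1 \subseteq \tau^0$ one simply takes $V := U \in \tau^0$, so that $V \subseteq U$ and $U \setminus V = \varnothing$, whence $d_{\tau^0}(U \setminus V) = d_{\tau^0}(\varnothing) = \varnothing$ by Fact \ref{dset}.1.

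There is no genuine obstacle here: the mathematical content is carried entirely by Theorem \ref{IL_Char}, and the hypothesis $\tau^1 \subseteq \tau^0$ makes the relevant condition collapse to the triviality that $d_{\tau^0}(\varnothing) = \varnothing$. The only point requiring a little care is citing the correct parts of Fact \ref{dset}, namely the openness criterion in \ref{dset}.4 and the emptiness of $d_{\tau^0}(\varnothing)$ in \ref{dset}.1.
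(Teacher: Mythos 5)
Your proof is correct and follows essentially the same route as the paper's: the paper verifies Clause 3 of Theorem \ref{IL_Char} exactly as you do, deducing $d_{\tau^0}(\overline{U}) \cap U = \varnothing$ from $U \in \tau^1 \subseteq \tau^0$ via Fact \ref{dset}.4 and then $d_{\tau^0}(d_{\tau^0}(\overline{U}) \cap U) = \varnothing$ via Fact \ref{dset}.1. Your alternative via Clause 4 with $V := U$ is also valid, but the main argument matches the paper's proof.
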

\begin{proof}
Let $U \in \tau^1$, then $U \in \tau^0$. 
By Fact \ref{dset}, $d_{\tau^0}(\overline{U}) \cap U = \varnothing$, and hence $d_{\tau^0}(d_{\tau^0}(\overline{U}) \cap U) = \varnothing$. 
By Theorem \ref{IL_Char}, $\langle X, \tau^0, \tau^1 \rangle$ is an $\IL$-space. 
\end{proof}

We have already stated that $\IL$ is complete with respect to Visser semantics (Fact \ref{ComplIL}). 
Actually, Visser proved the following stronger result saying that $\IL$ is sound and complete with respect to a smaller class of Visser frames than the class of all Visser frames validating $\IL$ (See also Fact \ref{VisSem}.3). 

\begin{fact}[Visser \cite{Vis88}]\label{ComplIL2}
For any $\mathcal{L}(\Box, \rhd)$-formula $\varphi$, the following are equivalent: 
\begin{enumerate}
	\item $\varnothing \vdash_\IL \varphi$. 
	\item $\varphi$ is valid in all Visser frames $\langle W, R, S \rangle$ with $R \subseteq S$. 
\end{enumerate}
\qed
\end{fact}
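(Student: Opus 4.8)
The plan is to treat the two implications separately; the implication $(1 \Rightarrow 2)$ is a direct instance of soundness, while $(2 \Rightarrow 1)$ is the substantive completeness claim, which I would obtain by transferring a Veltman countermodel to a Visser countermodel.

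For $(1 \Rightarrow 2)$, note that $\varnothing \vdash_\IL \varphi$ means $\varphi \in \IL$. By Fact \ref{VisSem}.3, every Visser frame $\langle W, R, S\rangle$ with $R \subseteq S$ satisfies $\IL \subseteq \Log(W, R, S)$, so $\varphi \in \Log(W, R, S)$, i.e.\ $\varphi$ is valid in all such frames. Hence $(1 \Rightarrow 2)$ is immediate.

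For $(2 \Rightarrow 1)$ I would argue by contraposition through Veltman semantics, following Visser's original strategy. Assume $\varnothing \nvdash_\IL \varphi$. By the de Jongh--Veltman completeness theorem of $\IL$ with respect to Veltman semantics (together with its finite model property), there is a finite Veltman model $\langle W, R, \{S_w\}_{w \in W}, \Vdash\rangle$ and a point $w_0$ with $w_0 \nVdash \varphi$. From this Veltman countermodel I would build a Visser model $\langle W', R', S', \Vdash'\rangle$ satisfying $R' \subseteq S'$, together with a bisimulation $Z \subseteq W \times W'$ relating $w_0$ to some $w_0' \in W'$. The relation $Z$ should be atomically harmonious and satisfy the forth/back conditions for $R$ (which secure the inductive clauses for $\Box$ and $\Diamond$) as well as forth/back conditions adapted to the $\rhd$-clause of Visser models: whenever $a Z a'$ and $a \Vdash \psi \rhd \chi$, every $R'$-successor $b'$ of $a'$ forcing $\psi$ must be matched by an $R'$-successor $c'$ of $a'$ with $b' S' c'$ forcing $\chi$, and symmetrically in the other direction. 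A routine induction on the construction of formulas then shows that $Z$ preserves forcing of all $\mathcal{L}(\Box, \rhd)$-formulas, so $w_0' \nVdash' \varphi$ and $\varphi$ fails in a Visser frame with $R \subseteq S$.

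The hard part is the construction of the single global relation $S'$ out of the family $\{S_w\}_{w \in W}$ while simultaneously enforcing $R' \subseteq S'$ and preserving the $\rhd$-back condition. The obstruction is that each $S_w$ is local to the successors of $w$, whereas $S'$ must be one relation on all of $W'$; for a fixed $y$ forced to lie $S_x$-above different points in different contexts $x$, no single $S'$ on the original domain can reproduce every $S_x$ at once and still be transitive with $R' \subseteq S'$. The standard remedy, which I would adopt, is to let $W'$ be a refinement of $W$ in which worlds are duplicated according to the context in which they are reached, so that each copy carries exactly one local $S_w$-behavior; on this refined domain $S'$ can be defined as a transitive, reflexive relation containing $R'$, and the projection back to $W$ furnishes the required bisimulation.
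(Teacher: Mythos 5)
Your proposal is correct in outline, but it takes a genuinely different route from the paper's. Your direction $(1 \Rightarrow 2)$ via Fact \ref{VisSem}.3 coincides with the paper's; for $(2 \Rightarrow 1)$, however, you reconstruct Visser's original argument: completeness of $\IL$ with respect to (finite) Veltman models, followed by a transfer to a Visser model with $R' \subseteq S'$ via a bisimulation, duplicating worlds according to the context in which they are reached so that each copy carries a single local $S_w$-behavior. The paper never passes through Veltman semantics at all: it starts from Fact \ref{ComplIL}, i.e.\ completeness of $\IL$ with respect to \emph{all} Visser frames validating $\IL$ (no condition relating $R$ and $S$), and then shows by the topological machinery of Section \ref{IL} that any such frame can be replaced, on the same set of worlds and with the same $R$, by a frame $\langle W, R, S' \rangle$ with $R \subseteq S'$ and $\Log(W, R, S) = \Log(W, R, S')$. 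Concretely, the frame is viewed as the bitopological space $\langle W, \tau_R, \tau_S \rangle$ (Corollary \ref{VSTS}); validity of $\J{5}$ yields, via Theorem \ref{IL_Char}.4, that every $U \in \tau_S$ contains a $\tau_R$-open $V$ with $d_{\tau_R}(U \setminus V) = \varnothing$; these $V$ form a new Alexandroff topology $\tau^2 \subseteq \tau_R$ satisfying $e_{\tau_R, \tau_S}(Y,Z) = e_{\tau_R, \tau^2}(Y,Z)$ (Theorem \ref{Alex5}, using Lemmas \ref{Alex3} and \ref{Alex4}), and $\tau^2$ induces the desired $S' \supseteq R$. Comparing the two: your route is self-contained relative to the classical literature and works pointwise on models, but two caveats apply. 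First, your $\rhd$-clause for the bisimulation is stated formula-by-formula (``whenever $a \Vdash \psi \rhd \chi$ \ldots''), which is the conclusion of the truth-preservation induction rather than a usable hypothesis; for the ``routine induction'' to run, the zig-zag conditions must be purely structural, relating $R'$, $S'$ and the family $\{S_w\}$ without mention of formulas, as in Visser's definition. Second, the duplication cannot in general be kept finite: $\IL$ lacks the finite model property with respect to Visser semantics even though it enjoys it in Veltman semantics, so your appeal to FMP buys nothing here (plain Veltman completeness suffices, and the refined domain may be infinite). The paper's route avoids both world-duplication and bisimulations entirely, keeping the domain and $R$ fixed and modifying only the second relation, at the price of the bitopological apparatus it has already built; as a bonus it isolates the exact topological content of axiom $\J{5}$ (clause 4 of Theorem \ref{IL_Char}).
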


We explain how Fact \ref{ComplIL2} follows from Fact \ref{ComplIL} in our framework. 
For this purpose, we prepare the following lemmas. 

\begin{lem}\label{Alex3}
For any topological space $\langle X, \tau \rangle$, the following are equivalent: 
\begin{enumerate}
	\item $\langle X, \tau \rangle$ is Alexandroff. 
	\item For any family $\{Y_i\}_{i \in I}$ of subsets of $X$, $d_{\tau}(\bigcup_{i \in I} Y_i) \subseteq \bigcup_{i \in I} d_{\tau}(Y_i)$. 
\end{enumerate}
\end{lem}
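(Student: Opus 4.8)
The plan is to prove both implications directly, using the openness criterion of Fact \ref{dset}.4 for the harder direction. Note first that the reverse inclusion $\bigcup_{i \in I} d_\tau(Y_i) \subseteq d_\tau(\bigcup_{i \in I} Y_i)$ always holds by monotonicity (Fact \ref{dset}.2), so Clause 2 really asserts that the two sets coincide; only the displayed inclusion is genuinely in question.

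For $(1 \Rightarrow 2)$ I would argue contrapositively. Suppose $x \notin \bigcup_{i \in I} d_\tau(Y_i)$, so that for each $i$ there is a $\tau$-neighborhood $U_i$ of $x$ with $U_i \cap Y_i \subseteq \{x\}$. Using the Alexandroff hypothesis, the intersection $U := \bigcap_{i \in I} U_i$ is again a $\tau$-neighborhood of $x$, and since $U \subseteq U_i$ we get $U \cap Y_i \subseteq \{x\}$ for every $i$; hence $U \cap \bigcup_{i \in I} Y_i = \bigcup_{i \in I}(U \cap Y_i) \subseteq \{x\}$, which witnesses $x \notin d_\tau(\bigcup_{i \in I} Y_i)$. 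This is the step where the Alexandroff property is used essentially: without closure under arbitrary intersections the neighborhoods $U_i$ cannot be amalgamated into a single witness.

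For $(2 \Rightarrow 1)$ I would translate openness into a statement about derived sets via Fact \ref{dset}.4, which says $Y \in \tau$ iff $d_\tau(\overline{Y}) \cap Y = \varnothing$. Let $\{U_j\}_{j \in J}$ be any family of members of $\tau$ and put $U := \bigcap_{j \in J} U_j$; the goal is $d_\tau(\overline{U}) \cap U = \varnothing$. Since $\overline{U} = \bigcup_{j \in J} \overline{U_j}$, Clause 2 yields $d_\tau(\overline{U}) \subseteq \bigcup_{j \in J} d_\tau(\overline{U_j})$. Intersecting with $U$ and using $U \subseteq U_j$ together with $d_\tau(\overline{U_j}) \cap U_j = \varnothing$ (which holds because each $U_j$ is open) gives $d_\tau(\overline{U}) \cap U \subseteq \bigcup_{j \in J}(d_\tau(\overline{U_j}) \cap U_j) = \varnothing$, so $U \in \tau$ by Fact \ref{dset}.4.

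The only point requiring a genuine idea is recognizing, in $(2 \Rightarrow 1)$, that the derived-set inequality of Clause 2 should be applied to the complements $\overline{U_j}$ and then fed through the openness criterion; once this reformulation is in place the argument is short, and the forward direction is routine modulo the correct invocation of the Alexandroff closure property.
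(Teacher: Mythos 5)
Your proof is correct and follows essentially the same route as the paper's: the forward direction amalgamates the witnessing neighborhoods $U_i$ into $\bigcap_{i \in I} U_i$ via the Alexandroff property, and the reverse direction applies Clause 2 to the complements $\overline{U_j}$ and feeds the result through the openness criterion $U \in \tau \iff d_\tau(\overline{U}) \cap U = \varnothing$ of Fact \ref{dset}.4, exactly as in the paper. Your opening observation that the converse inclusion always holds by monotonicity also matches the remark the paper makes immediately after the lemma.
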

\begin{proof}
$(1 \Rightarrow 2)$: 
Let $\{Y_i\}_{i \in I}$ be any family of subsets of $X$. 
%We easily obtain $\bigcup_{i \in I} d_{\tau}(Y_i) \subseteq d_{\tau}(\bigcup_{i \in I}Y_i)$ by Fact \ref{dset}.2. 
%We prove the converse inclusion. 
Let $x \notin \bigcup_{i \in I} d_\tau(Y_i)$. 
Then, for all $i \in I$, there exists a $\tau$-neighborhood $U_i$ of $x$ such that $Y_i \cap U_i \subseteq \{x\}$. 
Let $U : = \bigcap_{i \in I} U_i$, then $U$ is also a $\tau$-neighborhood of $x$ because $\tau$ is Alexandroff. 
Suppose, towards a contradiction, $x \in d_{\tau}(\bigcup_{i \in I}Y_i)$. 
Then there exists $y \neq x$ such that $y \in \left(\bigcup_{i \in I} Y_i \right) \cap U$. 
For some $j \in I$, $y \in Y_j \cap U \subseteq Y_j \cap U_j$, and this is a contradiction. 
Therefore $x \notin d_{\tau}(\bigcup_{i \in I}Y_i)$. 

$(2 \Rightarrow 1)$: 
Let $\{U_i\}_{i \in I}$ be any family of sets of $\tau$. 
Then for each $i \in I$, $d_{\tau}(\overline{U_i}) \cap U_i = \varnothing$ by Fact \ref{dset}.4. 
\begin{align*}
	d_{\tau}(\overline{\bigcap_{i \in I} U_i}) \cap \bigcap_{i \in I} U_i & = d_{\tau}(\bigcup_{i \in I} \overline{U_i}) \cap \bigcap_{i \in I} U_i, \\
	& \subseteq \bigcup_{i \in I} d_{\tau}(\overline{U_i}) \cap \bigcap_{i \in I} U_i, \tag{by Clause 1}\\
	& \subseteq \bigcup_{i \in I} (d_{\tau}(\overline{U_i}) \cap U_i) = \varnothing.
\end{align*}
Therefore $\bigcap_{i \in I} U_i$ is a member of $\tau$. 
\end{proof}

Notice that the converse inclusion $\bigcup_{i \in I} d_{\tau}(Y_i) \subseteq d_{\tau}(\bigcup_{i \in I}Y_i)$ in Lemma \ref{Alex3}.2 is easily obtained from Fact \ref{dset}.2. 

\begin{lem}\label{Alex4}
Let $\langle X, \tau \rangle$ be a topological space and $V, U \subseteq X$. 
If $V \subseteq U$ and $d_{\tau}(U \setminus V) = \varnothing$, then $d_{\tau}(Y \cap U) = d_{\tau}(Y \cap V)$ for all subsets $Y$ of $X$. 
\end{lem}
\begin{proof}
Notice that $d_{\tau^0}(Y \cap (U \setminus V))$ is also empty because $Y \cap (U \setminus V) \subseteq U \setminus V$. 
Since $U = (U \setminus V) \cup V$, by Fact \ref{dset}.3, 
\[
	d_\tau(Y \cap U) = d_\tau(Y \cap (U \setminus V)) \cup d_\tau(Y \cap V) = d_{\tau}(Y \cap V). 
\]
\end{proof}

\begin{thm}\label{Alex5}
Let $\langle X, \tau^0, \tau^1 \rangle$ be a bitopological space with both $\tau^0$ and $\tau^1$ are Alexandroff. 
Then, the following are equivalent:
\begin{enumerate}
	\item $\langle X, \tau^0, \tau^1 \rangle$ is an $\IL$-space. 
	\item $\tau^0$ is scattered and there exists an Alexandroff topology $\tau^2$ on $X$ such that $\tau^0 \cap \tau^1 \subseteq \tau^2 \subseteq \tau^0$ and $\Log (X, \tau^0, \tau^1) = \Log(X, \tau^0, \tau^2)$. 
	\item There exists a Visser frame $\langle X, R, S \rangle$ such that $R \subseteq S$ and $\Log(X, \tau^0, \tau^1) = \Log(X, R, S)$. 
\end{enumerate}
\end{thm}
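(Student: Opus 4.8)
The plan is to establish the cycle $(1)\Rightarrow(2)\Rightarrow(3)\Rightarrow(1)$. Two of these links are quick consequences of earlier results. For $(3)\Rightarrow(1)$, if $\langle X,R,S\rangle$ is a Visser frame with $R\subseteq S$ and $\Log(X,\tau^0,\tau^1)=\Log(X,R,S)$, then $\IL\subseteq\Log(X,R,S)$ by Fact~\ref{VisSem}.3, so $\langle X,\tau^0,\tau^1\rangle$ is an $\IL$-space. For $(2)\Rightarrow(3)$, I would apply Theorem~\ref{Alex2} to the bitopological space $\langle X,\tau^0,\tau^2\rangle$: since $\tau^0$ is scattered and both $\tau^0,\tau^2$ are Alexandroff, there is a Visser frame $\langle X,R,S\rangle$ with $\tau^0=\tau_R$ and $\tau^2=\tau_S$. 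The inclusion $\tau^2\subseteq\tau^0$ forces $R\subseteq S$ (for each point the minimal $\tau^0$-neighborhood $\{x\}\cup R(x)$ is contained in the minimal $\tau^2$-neighborhood $S(x)$), and then $\Log(X,R,S)=\Log(X,\tau_R,\tau_S)=\Log(X,\tau^0,\tau^2)=\Log(X,\tau^0,\tau^1)$ by Corollary~\ref{VSTS} and the hypothesis of $(2)$.

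The heart of the argument is $(1)\Rightarrow(2)$, where I must manufacture the intermediate topology $\tau^2$. Since $\langle X,\tau^0,\tau^1\rangle$ is an $\IL$-space it is a $\CL$-space, so $\tau^0$ is scattered by Corollary~\ref{Sound_CL} and Theorem~\ref{IL_Char} is available. Guided by the proof of $(3\Rightarrow4)$ there, I would set $V_U:=U\setminus d_{\tau^0}(\overline{U})$ for each $U\in\tau^1$; by that argument $V_U\in\tau^0$, $V_U\subseteq U$, and $d_{\tau^0}(U\setminus V_U)=\varnothing$. Writing $\mathcal{B}:=\{V_U\mid U\in\tau^1\}$, a short computation using Fact~\ref{dset} and the Alexandroff property of $\tau^0$ (via Lemma~\ref{Alex3}) shows $V_{\bigcap_i U_i}=\bigcap_i V_{U_i}$, so $\mathcal{B}$ is closed under arbitrary intersections and contains $X=V_X$ and $\varnothing=V_\varnothing$. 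I then define $\tau^2$ to consist of all unions of members of $\mathcal{B}$. This is a topology with base $\mathcal{B}$; it is Alexandroff, since every point has the minimal neighborhood $\bigcap\{V_U\mid U\in\tau^1,\ x\in V_U\}\in\mathcal{B}$; and $\tau^0\cap\tau^1\subseteq\tau^2\subseteq\tau^0$, the right inclusion because each $V_U\in\tau^0$, and the left because $U\in\tau^0\cap\tau^1$ gives $d_{\tau^0}(\overline U)\cap U=\varnothing$ by Fact~\ref{dset}.4, whence $V_U=U$.

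It then remains to verify $\Log(X,\tau^0,\tau^1)=\Log(X,\tau^0,\tau^2)$. Since the two semantics share $\tau^0$, they agree on every clause except $\rhd$, so it suffices to prove $e_{\tau^0,\tau^1}(Y,Z)=e_{\tau^0,\tau^2}(Y,Z)$ for all $Y,Z\subseteq X$. The pivotal observation is that the sets available to the two operators produce identical derived sets: by Lemma~\ref{Alex4}, $d_{\tau^0}(Y\cap U)=d_{\tau^0}(Y\cap V_U)$ for every $U\in\tau^1$ and every $Y$; and for an arbitrary $W=\bigcup_i V_{U_i}\in\tau^2$, putting $U:=\bigcup_i U_i\in\tau^1$ and applying the Alexandroff identity of Lemma~\ref{Alex3} together with Lemma~\ref{Alex4} term by term yields $d_{\tau^0}(Y\cap W)=\bigcup_i d_{\tau^0}(Y\cap U_i)=d_{\tau^0}(Y\cap U)$ for every $Y$ simultaneously. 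Thus each $\tau^1$-open and each $\tau^2$-open has a partner in the other topology giving the same derived sets, so the universally quantified conditions defining $e_{\tau^0,\tau^1}$ and $e_{\tau^0,\tau^2}$ coincide, and hence so do the valuations of every formula and the two logics.

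The main obstacle is precisely this last equality of the $e$-operators. The delicate point is that $\tau^2$ is strictly richer than $\mathcal{B}$, containing arbitrary unions, so a single $W\in\tau^2$ need not be any $V_U$; the construction succeeds only because the identity $d_{\tau^0}(Y\cap W)=d_{\tau^0}(Y\cap U)$ holds for one fixed $U=\bigcup_i U_i$ \emph{uniformly} in $Y$, which is what lets the same $U$ handle both the $Y$-clause and the $Z$-clause of the $e$-condition at once. This uniformity rests on the Alexandroff hypothesis on $\tau^0$, used to commute $d_{\tau^0}$ with arbitrary unions, and on the $\IL$-space hypothesis, used through Theorem~\ref{IL_Char} to guarantee $d_{\tau^0}(U\setminus V_U)=\varnothing$; dropping either ingredient would break the equivalence.
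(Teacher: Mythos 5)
Your proof is correct and follows essentially the same route as the paper: the same cycle $(1)\Rightarrow(2)\Rightarrow(3)\Rightarrow(1)$ with the same key ingredients (Theorem~\ref{IL_Char} to produce $V \subseteq U$ with $d_{\tau^0}(U \setminus V) = \varnothing$, Lemmas~\ref{Alex3} and~\ref{Alex4} for the equality $e_{\tau^0, \tau^1} = e_{\tau^0, \tau^2}$, and the Theorem~\ref{Alex2}/Corollary~\ref{VSTS} translation, with $\tau^2 \subseteq \tau^0$ forcing $R \subseteq S$). The only cosmetic difference is that you generate $\tau^2$ from the canonical sets $V_U = U \setminus d_{\tau^0}(\overline{U})$ as a base, whereas the paper defines $\tau^2$ directly as all $V \in \tau^0$ admitting a witness $U \in \tau^1$ with $V \subseteq U$ and $d_{\tau^0}(U \setminus V) = \varnothing$; this merely shifts the Alexandroff union-commutation step from the verification that $\tau^2$ is a topology into your uniform identity $d_{\tau^0}(Y \cap W) = d_{\tau^0}\bigl(Y \cap \bigcup_i U_i\bigr)$.
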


\begin{proof}
$(1 \Rightarrow 2)$: Since $\langle X, \tau^0, \tau^1 \rangle$ is a $\CL$-space, $\tau^0$ is scattered by Corollary \ref{Sound_CL}. 
Define 
\[
	\tau^2 : = \{V \in \tau^0 \mid \exists U \in \tau^1[V \subseteq U\ \&\ d_{\tau^0}(U \setminus V) = \varnothing] \}.
\]
Then, obviously $\tau^2 \subseteq \tau^0$. 
Let $V \in \tau^0 \cap \tau^1$. 
Since $V \subseteq V$ and $d_{\tau^0}(V \setminus V) = d_{\tau^0}(\varnothing) = \varnothing$ by Fact \ref{dset}.1, we have $V \in \tau^2$. 
Thus $\tau^0 \cap \tau^1 \subseteq \tau^2$. 

First, we prove that $\tau^2$ is a topology on $X$. 
\begin{itemize}
	\item Since $X$ and $\varnothing$ are in $\tau^0 \cap \tau^1$, they are also in $\tau^2$. 
	\item Let $V_0, V_1 \in \tau^2$. 
	Then there exist elements $U_0$ and $U_1$ of $\tau^1$ such that $V_i \subseteq U_i$ for $i \in \{ 0,1 \}$ and $d_{\tau^0}(U_0 \setminus V_0) = d_{\tau^0}(U_1 \setminus V_1) = \varnothing$. 
	We have $V_0 \cap V_1 \subseteq U_0 \cap U_1 \in \tau^1$ and 
\begin{align*}
	d_{\tau^0}((U_0 \cap U_1) \setminus (V_0 \cap V_1)) & = d_{\tau^0}(((U_0 \cap U_1) \setminus V_0) \cup ((U_0 \cap U_1) \setminus V_1)), \\
	& \subseteq d_{\tau^0}((U_0 \setminus V_0) \cup (U_1 \setminus V_1)), \tag{by Fact \ref{dset}.2} \\
	& = d_{\tau^0}(U_0 \setminus V_0) \cup d_{\tau^0}(U_1 \setminus V_1) = \varnothing. \tag{by Fact \ref{dset}.3}
\end{align*}
Hence $V_0 \cap V_1 \in \tau^2$. 

	\item Let $\{V_i\}_{i \in I}$ be any family of elements of $\tau^2$. 
	Then for each $i \in I$, there exists $U_i \in \tau^1$ such that $V_i \subseteq U_i$ and $d_{\tau^0}(U_i \setminus V_i) = \varnothing$. 
	We get $\bigcup_{i \in I}V_i \subseteq \bigcup_{i \in I} U_i \in \tau^1$ and 
\begin{align*}
	d_{\tau^0}((\bigcup_{i \in I} U_i) \setminus (\bigcup_{i \in I} V_i)) & \subseteq d_{\tau^0}(\bigcup_{i \in I}(U_i \setminus V_i)), \tag{by Fact \ref{dset}.2} \\
	& \subseteq \bigcup_{i \in I}d_{\tau^0}(U_i \setminus V_i) = \varnothing. \tag{by Lemma \ref{Alex3}}
\end{align*}
Therefore $\bigcup_{i \in I} V_i$ is an element of $\tau^2$. 
\end{itemize}

Secondly, we prove $\tau^2$ is Alexandroff. 
Let $\{V_i\}_{i \in I}$ be any family of elements of $\tau^2$. 
Then for each $i \in I$, there exists $U_i \in \tau^1$ such that $d_{\tau^0}(U_i \setminus V_i) = \varnothing$. 
Since $\tau^1$ is Alexandroff, $\bigcap_{i \in I} V_i \subseteq \bigcap_{i \in I} U_i \in \tau^1$. 
Since $\tau^0$ is also Alexandroff, 
\begin{align*}
	d_{\tau^0}((\bigcap_{i \in I} U_i) \setminus (\bigcap_{i \in I} V_i)) & \subseteq d_{\tau^0}(\bigcup_{i \in I}(U_i \setminus V_i)), \tag{by Fact \ref{dset}.2} \\
	& = \bigcup_{i \in I}d_{\tau^0}(U_i \setminus V_i) = \varnothing. \tag{by Lemma \ref{Alex3}}
\end{align*}
Therefore $\bigcap_{i \in I}V_i \in \tau^2$. 

Finally, we prove $\Log(X, \tau^0, \tau^1) = \Log(X, \tau^0, \tau^2)$. 
It suffices to prove that for all subsets $Y, Z$ of $X$, $e_{\tau^0, \tau^1}(Y, Z) = e_{\tau^0, \tau^2}(Y, Z)$. 

$(\subseteq)$: 
Let $x \in e_{\tau^0, \tau^1}(Y, Z)$, $V \in \tau^2$ and $x \in d_{\tau^0}(Y \cap V)$. 
We would like to show $x \in d_{\tau^0}(Z \cap V)$. 
Then, there exists $U \in \tau^1$ such that $V \subseteq U$ and $d_{\tau^0}(U \setminus V) = \varnothing$. 
By Lemma \ref{Alex4}, $d_{\tau^0}(Y \cap U) = d_{\tau^0}(Y \cap V)$ and so $x \in d_{\tau^0}(Y \cap U)$. 
Since $x \in e_{\tau^0, \tau^1}(Y, Z)$, $x \in d_{\tau^0}(Z \cap U)$. 
By Lemma \ref{Alex4} again, $d_{\tau^0}(Z \cap U) = d_{\tau^0}(Z \cap V)$ and thus $x \in d_{\tau^0}(Z \cap V)$. 

$(\supseteq)$: 
Let $x \in e_{\tau^0, \tau^2}(Y, Z)$, $U \in \tau^1$ and $x \in d_{\tau^0}(Y \cap U)$. 
We would like to show $x \in d_{\tau^0}(Z \cap U)$. 
Since $\langle X, \tau^0, \tau^1 \rangle$ is an $\IL$-space, by Theorem \ref{IL_Char}, there exists $V \in \tau^0$ such that $V \subseteq U$ and $d_{\tau^0}(U \setminus V) = \varnothing$. 
Then, $V \in \tau^2$. 
As above, by Lemma \ref{Alex4}, $x \in d_{\tau^0}(Y \cap U) = d_{\tau^0}(Y \cap V)$. 
Since $x \in e_{\tau^0, \tau^2}(Y, Z)$, $x \in d_{\tau^0}(Z \cap V)$. 
Also by Lemma \ref{Alex4} again, $x \in d_{\tau^0}(Z \cap V) = d_{\tau^0}(Z \cap U)$. 

$(2 \Rightarrow 3)$: 
Let $R$ and $S$ be binary relations on $X$ defined as follows: 
\begin{itemize}
	\item $x R y : \iff x \neq y\ \&\ \forall U \in \tau^0 (x \in U \Rightarrow y \in U)$; 
	\item $x S y : \iff \forall U \in \tau^2 (x \in U \Rightarrow y \in U)$. 
\end{itemize}
As proved in the proof of Theorem \ref{Alex2}, $\langle X, R, S \rangle$ is a Visser frame, $\tau^0 = \tau_R$ and $\tau^2 = \tau_S$. 
By Corollary \ref{VSTS}, $\Log(X, R, S) = \Log(X, \tau_R, \tau_S) = \Log(X, \tau^0, \tau^2) = \Log(X, \tau^0, \tau^1)$. 
Also $R \subseteq S$ follows from the definitions of $R$ and $S$ and $\tau^2 \subseteq \tau^0$. 

$(3 \Rightarrow 1)$: 
This is a direct consequence of Fact \ref{VisSem}.3. 
\end{proof}

\begin{cor}
For any Visser frame $\langle W, R, S \rangle$, the following are equivalent: 
\begin{enumerate}
	\item $\IL \subseteq \Log(W, R, S)$. 
	\item There exists a Visser frame $\langle W, R, S' \rangle$ such that $R \subseteq S'$ and $\Log(W, R, S) = \Log(W, R, S')$. 
\end{enumerate}
\end{cor}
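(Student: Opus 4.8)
The plan is to transfer the statement into the bitopological setting, where Theorem~\ref{Alex5} does all the work, and then to translate back by means of Corollary~\ref{VSTS}. For the direction $(1 \Rightarrow 2)$, I would begin by observing that since $\langle W, R \rangle$ is transitive and conversely well-founded and $S$ is transitive and reflexive, Fact~\ref{KSTS}.1 tells us that both $\tau_R$ and $\tau_S$ are Alexandroff, while Facts~\ref{CWF}, \ref{KSTS}.3 and \ref{SimEsa} together give that $\tau_R$ is scattered. By Corollary~\ref{VSTS} we have $\Log(W, R, S) = \Log(W, \tau_R, \tau_S)$, so the hypothesis $\IL \subseteq \Log(W, R, S)$ says precisely that the Alexandroff bitopological space $\langle W, \tau_R, \tau_S \rangle$ is an $\IL$-space. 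Applying the implication $(1 \Rightarrow 3)$ of Theorem~\ref{Alex5} then produces a Visser frame whose second relation $S'$ satisfies $R'' \subseteq S'$ and $\Log(W, \tau_R, \tau_S) = \Log(W, R'', S')$, where $R''$ is the relation read off from $\tau_R$ in the proof of that theorem, namely $x R'' y :\iff x \neq y\ \&\ \forall U \in \tau_R\,(x \in U \Rightarrow y \in U)$.

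The one point that needs care, and which I expect to be the only real obstacle, is that the first relation $R''$ supplied by Theorem~\ref{Alex5} must be shown to coincide with the originally given $R$, since the corollary demands a frame of the form $\langle W, R, S' \rangle$ with the \emph{same} $R$. This I would verify directly: for a transitive and irreflexive $R$ one has $x R y$ if and only if $x \neq y$ and every $R$-upward closed set containing $x$ contains $y$. The forward implication is immediate from $R$-upward closedness, and for the converse the witnessing open set is $\{x\} \cup R(x)$, which lies in $\tau_R$ by transitivity of $R$ and contains $x$; hence $y \in \{x\} \cup R(x)$ together with $x \neq y$ forces $x R y$. Thus $R'' = R$, and combining with Corollary~\ref{VSTS} we obtain $\Log(W, R, S) = \Log(W, \tau_R, \tau_S) = \Log(W, R, S')$ with $R \subseteq S'$, which is exactly clause $2$.

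Finally, the direction $(2 \Rightarrow 1)$ is immediate and requires no topology: given a Visser frame $\langle W, R, S' \rangle$ with $R \subseteq S'$ and $\Log(W, R, S) = \Log(W, R, S')$, Fact~\ref{VisSem}.3 yields $\IL \subseteq \Log(W, R, S')$, and the assumed equality of the two logics then gives $\IL \subseteq \Log(W, R, S)$. Taken together, the two directions establish the equivalence.
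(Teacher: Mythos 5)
Your proposal is correct and follows essentially the same route as the paper: pass to $\langle W, \tau_R, \tau_S \rangle$ via Corollary~\ref{VSTS}, apply Theorem~\ref{Alex5}, and then verify that the relation extracted from $\tau_R$ in that theorem's proof coincides with the original $R$, using the witness $\{x\} \cup R(x)$ for the right-to-left direction — exactly the identification the paper makes. The converse direction via Fact~\ref{VisSem}.3 also matches the paper's argument.
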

\begin{proof}
$(1 \Rightarrow 2)$: 
By Fact \ref{KSTS}, both $\tau_R$ and $\tau_S$ are Alexandroff. 
By Corollary \ref{VSTS}, $\Log(W, R, S) = \Log(W, \tau_R, \tau_S)$, and hence $\langle W, \tau_R, \tau_S \rangle$ is an $\IL$-space. 
By Theorem \ref{Alex5}, there exists a Visser frame $\langle W, R', S' \rangle$ such that $R' \subseteq S'$ and $\Log(W, R', S') = \Log(W, \tau_R, \tau_S)$. 
Then $\Log(W, R, S) = \Log(W, R', S')$. 
Furthermore, since $R$ is irreflexive and transitive, it is easily shown that for any $x, y \in W$, 
\[
	x R y \iff x \neq y\ \&\ \forall U \in \tau_R (x \in U \Rightarrow y \in U). 
\]
Notice that the right-to-left direction of this equivalence is proved by letting $U = \{ x \} \cup R(x)$.
From our proof of Theorem \ref{Alex5}, $R' = R$.

$(2 \Rightarrow 1)$: Immediate from Fact \ref{VisSem}.3. 
\end{proof}

\section{Concluding remarks}

In this paper, we newly introduced a topological semantics of $\CL$ and its extensions, and proved the topological compactness theorem. 
As a consequence, we proved that the logics $\CL$, $\CLM$, $\IL$, $\ILM$, $\ILP$ and $\ILW$ are strongly complete with respect to our topological semantics. 
These results are just the starting point for research in this direction.  
Obviously, investigating the topological completeness of other logics which are not listed above is an important further task. 

As we have described in Section \ref{TopSem}, we introduced our new topological semantics with Visser semantics in mind. 
Actually, we proved that every Visser frame can be considered as a topological frame (Corollary \ref{VSTS}). 
Also, each Visser frame can be considered as a Veltman frame, but it is not known whether each Veltman frame can be considered as a topological frame.
In this regard, we propose the following problem.

\begin{prob}
Is there a normal extension $L$ of $\CL$ such that $L$ is complete with respect to Veltman semantics but not with respect to our topological semantics?
\end{prob}

While $\CL$ and some of its extensions are strongly complete with respect to our semantics, they are not with respect to Veltman and Visser semantics. 
This seems to be an evidence that our semantics can provide more models than these relational semantics. 
Then, we expect an affirmative answer to the following problem. 

\begin{prob}
Is there a normal extension $L$ of $\CL$ such that $L$ is complete with respect to our semantics but not with respect to Veltman semantics?
\end{prob}

Visser \cite{Vis88} proved that the logics $\ILP$ and $\ILW$ have finite model property with respect to Visser semantics. 
That is, each of these logics is determined by a class of corresponding finite Visser frames. 
Therefore, these logics also have finite model property with respect to our topological semantics. 
On the other hand, Visser also proved that $\IL$ and $\ILM$ do not have finite model property with respect to Visser semantics (See also Visser \cite{Vis98}). 
Regarding this point, we propose the following problem. 

\begin{prob}
Do the logics $\CL$, $\CLM$, $\IL$ and $\ILM$ have finite model property with respect to our topological semantics?
\end{prob}

In order to understand the properties of axioms of $\CL$ and $\IL$ in more detail, the authors recently introduced several sublogics of them, and studied their basic characters such as completeness with respect to relational semantics and interpolation property (\cite{IKO,KO}).
We ask the following question about these sublogics.

\begin{prob}
Can we develop a topological semantics for these sublogics of $\CL$ and $\IL$?
\end{prob}

\section*{Acknowledgement}

The authors would like to thank Yuya Okawa for the valuable discussion. 
The second author was supported by JSPS KAKENHI Grant Number JP19K14586. 

\bibliographystyle{plain}
\bibliography{ref}

\end{document}